\newtheorem{thm}{Theorem}[section]
\newtheorem{pro}[thm]{Proposition} 
\newtheorem{lem}[thm]{Lemma}
\newtheorem{cor}[thm]{Corollary} 
\theoremstyle{definition}
\theoremstyle{definition}
\newtheorem{de}[thm]{Definition}
\newtheoremstyle{uebstyle}{}{}{\footnotesize}{}{\normalsize\scshape}{}{\newline}{}
\theoremstyle{uebstyle}
\newtheorem{uebungen}[thm]{\"Ubungsaufgaben}
\newtheorem{loesungen}[thm]{L\"osungen zu \"Ubungsaufgaben}
\numberwithin{equation}{section}
\newcommand{\mycomment}[1]{}
\newcommand{\ob}[1]{\mathbb{#1}}
\newcommand{\gb}[1]{#1}
\DeclareMathAlphabet{\mathsc}{OT1}{cmr}{m}{sc}
\DeclareMathAlphabet{\mathbfsl}{OT1}{cmr}{bx}{it}
\newcommand{\vb}[1]{\mathbfsl{#1}}
\newcommand{\N}{\mathbb{ N}}
\newcommand{\C}{\mathbb{ C}}
\newcommand{\ul}[1]{\underline{#1}}
\newcommand{\LT}{\mathsc{Lt}}
\newcommand{\LM}{\mathsc{Lm}}
\newcommand{\LEXP}{\mathsc{Lexp}}
\newcommand{\LC}{\mathsc{Lc}}
\newcommand{\lea}{\le_a}
\newcommand{\md}{\operatorname{mdeg}}
\newcommand{\NON}{\N_0^n}
\newcommand{\Supp}{\operatorname{Supp}}
\newcommand{\Mon}{\operatorname{Mon}}
\newcommand{\kxn}{\ob{K}[x_1,\ldots,x_n]}
\newcommand{\kn}{\ob{K}^n}
\newcommand{\kk}{\ob{K}}
\newcommand{\submod}[1]{\langle #1 \rangle}
\newcommand{\mono}[1]{\vb{x}^{\gb{#1}}}
\newcommand{\lcm}{\operatorname{lcm}}
\newcommand{\V}{\mathbb{V}}
\newcommand{\I}{\mathbb{I}}
\newcommand{\epsi}{\varepsilon}
\newcommand{\XI}{\langle x_1, \ldots, x_n \rangle}
\newcommand{\cofilter}[1]{\Delta (#1)}
\newcommand{\filter}[1]{#1{\uparrow}}
\newcommand{\ohne}{\setminus}
\newcommand{\union}{\,\cup\,}
 \renewcommand{\emptyset}{\varnothing}
\date{\today}
\subjclass[2010]{13B25, 11T06, 05D40}
\keywords{Combinatorial Nullstellensatz, Structured grids, Counting nonzeros,
          Alon-F\"uredi-Theorem}
\begin{document}
\bibliographystyle{is-alpha}

\title{Structured and Punctured Nullstellens\"atze}
\author{Erhard Aichinger}
\address{Erhard Aichinger,
Institute for Algebra,
Johannes Kepler University Linz,
4040 Linz,
Austria}
\email{\tt erhard@algebra.uni-linz.ac.at}

\author{John R.\ Schmitt}
\address{John R.\ Schmitt,
  Department of Mathematics, Middlebury College, Middlebury, VT 05753, United States of America}
\email{\tt jschmitt@middlebury.edu}

\author{Henry Zhan}
\address{Henry Zhan,
  Department of Mathematics, Middlebury College, Middlebury, VT 05753, United States of America}
\email{\tt hengz@middlebury.edu}

\begin{abstract}
  A \emph{Nullstellensatz} is a theorem providing information
on polynomials that vanish on a certain set: David Hilbert's Nullstellensatz
(1893) is a cornerstone of algebraic geometry, and
Noga Alon's Combinatorial Nullstellensatz (1999) is
a powerful tool in the \emph{Polynomial Method}, a technique
used in combinatorics. Alon's Theorem excludes that a polynomial
vanishing on a grid contains a monomial with certain
properties. This theorem has been generalized in several directions,
two of which we will consider in detail:
Terence Tao and Van H.\ Vu (2006), Uwe Schauz (2008) and   
Micha\l{} Laso\'n (2010) exclude more monomials, and recently, Bogdan Nica (2023)
  improved the result for grids with additional symmetries
  in their side edges. Simeon Ball and Oriol Serra (2009) incorporated
  the multiplicity of zeros and gave Nullstellens\"atze
  for \emph{punctured grids}, which are sets of
  the form $X \setminus Y$ with both $X,Y$ grids.

  We generalize some of these results; in particular,
  we provide a common generalization to the results of Schauz and
  Nica. To this end, we establish that during multivariate polynomial
  division, certain monomials are unaffected.
  This also allows us to generalize Pete L.\ Clark's proof of the nonzero counting
  theorem by Alon and F\"uredi to punctured grids.
\end{abstract}

  \maketitle

\section{Introduction} \label{sec:intro}
For a field $\ob{K}$, $n \in \N$ and a subset $S$ of
$\kn$, we say that a polynomial $f \in \kxn$ \emph{vanishes on $S$}
if $f(\vb{a}) = 0$ for all $\vb{a} \in S$. We will be particularly
interested in the case that $S$ is a grid.
Here we say that a
subset $S$ of $\kn$ is a \emph{grid over $\ob{K}$} if there are finite subsets
$S_1, \ldots, S_n$ of $\ob{K}$ such that
$S = \bigtimes_{i=1}^n S_i$. For a polynomial $f \in \kxn$, we denote its total degree by $\deg (f)$ (with $\deg (0) = -\infty$), and
$\ul{n} := \{1,2,\ldots, n\}$.
The model of our results will be Alon's Combinatorial Nullstellensatz.
\begin{thm}[Alon's Combinatorial Nullstellensatz
  {\cite[Theorem~1.2]{Al:CN}}] \label{thm:alon} 
  Let $S = \bigtimes_{i=1}^n S_i$ be a grid over $\kk$,
  and let $f \in \kxn$ be such that $f$ contains a monomial
  $x_1^{\alpha_1} \cdots x_n^{\alpha_n}$ with
  $\alpha_i < |S_i|$ for all $i \in \ul{n}$.
  Then if
  \begin{equation} \label{eq:alon}
    \sum_{i=1}^n \alpha_i = \deg (f),
  \end{equation}
  there is $\vb{s} \in S$ such that $f(\vb{s}) \neq 0$.
\end{thm}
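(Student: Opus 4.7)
The plan is a proof by contradiction: assume $f$ vanishes on all of $S$ and derive that the coefficient of $x_1^{\alpha_1}\cdots x_n^{\alpha_n}$ in $f$ must be $0$, contradicting the hypothesis that $f$ contains this monomial. The workhorse is a reduction of $f$ to a polynomial $\tilde f$ in which each variable $x_i$ appears with degree strictly less than $|S_i|$, combined with an argument that this reduction leaves the coefficient of the distinguished monomial untouched.

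First I would introduce, for each $i\in\ul{n}$, the polynomial $g_i(x_i):=\prod_{s\in S_i}(x_i-s)\in\kxn$, which vanishes on $S$ and whose leading term is $x_i^{|S_i|}$. Writing $g_i(x_i)=x_i^{|S_i|}-r_i(x_i)$ with $\deg_{x_i}(r_i)<|S_i|$, I would iteratively replace occurrences of $x_i^{|S_i|}$ inside $f$ by $r_i(x_i)$; each such step strictly lowers the total degree of the monomial being rewritten, so the process terminates at a polynomial $\tilde f\in\kxn$ satisfying $\deg_{x_i}(\tilde f)<|S_i|$ for every $i$ and $\tilde f(\vb{s})=f(\vb{s})$ for every $\vb{s}\in S$.

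The heart of the argument is to show that the coefficient of $x_1^{\alpha_1}\cdots x_n^{\alpha_n}$ in $\tilde f$ equals its nonzero coefficient in $f$. Indeed, if $\beta_i\ge|S_i|$ then substituting $x_i^{|S_i|}\mapsto r_i(x_i)$ inside $x_1^{\beta_1}\cdots x_n^{\beta_n}$ produces monomials each of total degree at most $\sum_j\beta_j-1$. Since the distinguished monomial has total degree exactly $\deg(f)$ by hypothesis~\eqref{eq:alon} and satisfies $\alpha_i<|S_i|$ for every $i$, no other monomial of $f$ can contribute to it in $\tilde f$: monomials of $f$ of total degree $<\deg(f)$ yield only terms of total degree $<\deg(f)$, monomials of $f$ of maximal total degree that need reduction yield terms of strictly smaller total degree, and monomials of $f$ of maximal total degree with all exponents already $<|S_i|$ are left untouched but are syntactically different from $x_1^{\alpha_1}\cdots x_n^{\alpha_n}$. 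The distinguished monomial itself survives the reduction unchanged, hence its coefficient in $\tilde f$ is nonzero.

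Finally I would invoke the standard lemma, proved by induction on $n$ out of the univariate fact that a polynomial of degree less than $|S_i|$ cannot vanish on all of $S_i$ unless it is zero, that a polynomial in $\kxn$ with $\deg_{x_i}<|S_i|$ for all $i$ which vanishes on $\bigtimes_i S_i$ must be the zero polynomial. Since $\tilde f$ vanishes on $S$, this forces $\tilde f=0$, contradicting the nonvanishing coefficient just established. The main obstacle is the bookkeeping of the preceding paragraph: justifying that every substitution \emph{strictly} lowers the total degree of its target monomial and therefore cannot create new contributions to the distinguished monomial. This is precisely where the hypothesis $\sum_i\alpha_i=\deg(f)$ is used.
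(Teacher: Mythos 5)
Your proof is correct, and it is operationally the same reduction-modulo-$G$ argument that underlies the paper's approach, just expressed in elementary terms rather than in the Gr\"obner-basis language the paper develops. The paper does not re-prove Theorem~\ref{thm:alon}, but its introduction sketches the route via the decomposition $f = \sum_{i=1}^n h_i g_i$ with $\deg(h_i g_i) \le \deg(f)$, and the later general machinery (natural standard expressions, $(G,\lambda)$-stable monomials, and the fact that $\{g_1,\dots,g_n\}$ is a Gr\"obner basis because the leading monomials $x_i^{|S_i|}$ are coprime) proves strictly stronger statements such as Theorem~\ref{thm:cnv}. Your $\tilde f$ is precisely the remainder $r$ in a natural standard expression of $f$ by $G = \{g_1,\dots,g_n\}$, and your degree-bookkeeping step, namely that a source monomial of total degree $< \deg(f)$ or one of maximal degree that gets rewritten can never regenerate $\mono{\alpha}$, is a hand-rolled special case of the paper's observation that $\mono{\alpha}$ is $(G,\lambda)$-stable (here with $\lambda = 0$). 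Your version buys elementariness: it avoids invoking Alon's Theorem~1.1 on the ideal $\I(S)$, relying instead only on the easy inductive fact that a low-degree polynomial vanishing on a grid is zero. The paper's version buys generality, since the same framework immediately yields the structured, punctured, and multiset extensions. One small thing worth making explicit in a final write-up: after one rewriting step a monomial's total degree drops strictly, and total degree never increases under further rewriting, so \emph{no} descendant of a rewritten monomial can ever reach total degree $\deg(f)$ again; this is the precise justification for your claim that only $\mono{\alpha}$ itself contributes to the coefficient of $\mono{\alpha}$ in $\tilde f$.
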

The proof relies on the fact (see \cite[Theorem~1.1]{Al:CN})
that the set $\I (S)$ of those polynomials
in $\kxn$ that vanish on $S$ is the ideal of $\kxn$ generated
by $g_1, \ldots, g_n$, where $g_i := \prod_{a \in S_i} (x_i-a)$, and
on the fact that every $f \in \I(S)$ can be written as
$\sum_{i=1}^n h_i g_i$ with $\deg (h_i g_i)\le \deg (f)$ for all $i \in \ul{n}$.
For ensuring the existence of a nonzero on a grid, Alon's Theorem requires that
$f$ contains a monomial
of maximal total degree such that the degree in each variable
is smaller than the corresponding side length of the grid.
Several subsequent results relax the condition~\eqref{eq:alon}
on such a monomial, and a simple condition was given in \cite{La:AGOC}:
For $\gb{\alpha} = (\alpha_1, \ldots, \alpha_n)$
and $\gb{\beta} = (\beta_1, \ldots, \beta_n) \in \NON$, we write
$\gb{\alpha} \sqsubseteq \gb{\beta}$ if $\alpha_i \le \beta_i$ for
all $i \in \ul{n}$, 
and $\gb{\alpha} \sqsubset \gb{\beta}$ if
 $\gb{\alpha} \sqsubseteq \gb{\beta}$ and $\gb{\alpha} \neq \gb{\beta}$.
The monomial
$x_1^{\alpha_1} \cdots x_n^{\alpha_n}$ is also written as $\mono{\alpha}$.
Clearly, a monomial $\mono{\alpha}$
divides a monomial $\mono{\beta}$ if and only if $\gb{\alpha} \sqsubseteq \gb{\beta}$.
For a polynomial $f = \sum_{\gb{\alpha} \in \NON} c_{\gb{\alpha}} \mono{\alpha}$,
we let $\Mon (f) := \{ \mono{\alpha} \mid \alpha \in \NON, c_{\gb{\alpha}} \neq 0 \}$ be the
set of monomials that appear in $f$, and
$\Supp (f) = \{ \alpha \in \NON \mid c_{\gb{\alpha}} \neq 0 \}$ be the
set of exponents of these monomials, called the \emph{support}
of $f$.
Now \cite[Theorem~2]{La:AGOC} tells that Theorem~\ref{thm:alon} still holds if
we replace~\eqref{eq:alon} by the weaker condition
\begin{equation} \label{eq:lason}
  (\alpha_1, \ldots, \alpha_n) \text{ is maximal in }
  \Supp (f)
  \text{ with respect to } \sqsubseteq.
\end{equation}  
Stated differently,~\eqref{eq:lason} requires that $\Supp(f)$ does not
contain a $\gamma \in \NON$ with $\alpha \sqsubseteq \gamma$ and
$\alpha \neq \gamma$. For fields of characteristic~$0$, this
result had been contained in \cite[Exercise~9.1.4]{TV:AC}.
This condition~\eqref{eq:lason} can also be stated as
\begin{equation} \label{eq:lason2}
  \text{ for every monomial }\mono{\gamma} \in \Mon (f) \setminus \{\mono{\alpha}\},
  \text{ there is } i \in \ul{n} \text{ such that } \gamma_i < \alpha_i.
\end{equation}
A stronger result is given in \cite[Theorem~3.2(ii)]{Sc:ASPD}.
This results tells that Theorem~\ref{thm:alon} still holds if
we replace~\eqref{eq:alon} by 
\begin{multline} \label{eq:schauz}
    \text{ for every monomial }\mono{\gamma} \in \Mon (f) \setminus \{\mono{\alpha}\}, \\
    \text{ there is } i \in \ul{n} \text{ such that } \gamma_i \neq \alpha_i
    \text{ and } \gamma_i \le |S_i| - 1. 
\end{multline}
Schauz's result also applies to rings other than fields. In the present
note, we restrict our attention to grids over fields.
In \cite{BS:PCN},
Ball and Serra incorporate the multiplicity
of zeros into Alon's theorem, and they extend the result from
grids to \emph{punctured grids};
these are sets that can be written as $X \setminus Y$ with both
$X$ and $Y$ grids. K\'os and R\'onyai \cite{KR:ANFM} generalized
Alon's theorem to grids whose edges are \emph{multisets}; such
grids will be considered in Section~\ref{sec:multisets}.

Nica \cite[Theorem~3.1]{Ni:POSG} gives a different lever to achieve
generalizations of Theorem~\ref{thm:alon} by taking into account the
structure
of the grid. For $\lambda \in \N_0$, 
we call a univariate polynomial $f \in \ob{K}[x]$ of degree $\nu \in \N_0$
\emph{$\lambda$-lacunary} if in $f$, all coefficients of $x^{\alpha}$ with
$\nu - \lambda \le \alpha < \nu$ vanish. Then \cite{Ni:POSG} defines
a finite set $A \subseteq \ob{K}$ to be \emph{$\lambda$-null}
if the polynomial $\prod_{a \in A} (x-a)$ is $\lambda$-lacunary.
For example, over the complex numbers $\C$, the set
$\{a \in \C \mid a^n = 1\}$ is $(n-1)$-null because the polynomial
$x^n - 1$ is $(n-1)$-lacunary, every finite subset of a field
$\ob{K}$ is $0$-null, and a subset $S$ of $\ob{K}$ is $1$-null if
$\sum_{a \in S} a = 0$. 
Nica's Theorem states:
\begin{thm}[Nica's Combinatorial Nullstellensatz for
            Structured Grids, {\cite[Theorem~3.1]{Ni:POSG}}] \label{thm:nica} 
    Let $S = \bigtimes_{i=1}^n S_i$ be a grid over the field $\ob{K}$, and
    let $\lambda \in \N_0$ be such that each $S_i$ is $\lambda$-null.
    Let $f \in \kxn$ 
    and let $(\alpha_1, \ldots, \alpha_n) \in \NON$
    with $\alpha_i < |S_i|$ for all $i \in \ul{n}$ be such that
    $f$ contains the monomial
    $x_1^{\alpha_1} \cdots x_n^{\alpha_n}$.
    Then if
  \begin{equation} \label{eq:nica}
    \sum_{i=1}^n \alpha_i \ge  \deg (f) - \lambda
  \end{equation}
  there is $\vb{s} \in S$ such that $f(\vb{s}) \neq 0$.
  \end{thm}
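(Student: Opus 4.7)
The plan is to follow the classical proof strategy of Theorem~\ref{thm:alon}, namely dividing $f$ by $g_1,\ldots,g_n$ where $g_i := \prod_{a \in S_i}(x_i-a)$ to obtain a remainder $r$ of $x_i$-degree less than $|S_i|$ in each variable, observing that $r\equiv 0$ whenever $f$ vanishes on $S$, and showing that the monomial $\mono{\alpha}$ survives the division. The novelty is to track \emph{total} degree, rather than per-variable degree, and to exploit the $\lambda$-lacunarity of each $g_i$.

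The $\lambda$-null hypothesis on $S_i$ means exactly that $g_i = x_i^{|S_i|} + \widetilde{g}_i$ with every monomial of $\widetilde{g}_i$ having $x_i$-degree at most $|S_i|-\lambda-1$. The basic reduction step replaces a monomial $\mono{\gamma}$ with $\gamma_i \ge |S_i|$ by
$$
\mono{\gamma} - x_i^{\gamma_i - |S_i|}\Bigl(\prod_{j \ne i} x_j^{\gamma_j}\Bigr) g_i
= -x_i^{\gamma_i - |S_i|}\Bigl(\prod_{j \ne i} x_j^{\gamma_j}\Bigr)\widetilde{g}_i.
$$
Any monomial $\mono{\delta}$ on the right-hand side satisfies $\sum_j \delta_j \le \sum_j \gamma_j - \lambda - 1$, so each reduction step strictly decreases total degree by at least $\lambda+1$.

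I would then iterate the reduction in any systematic well-founded order to obtain $f = \sum_i h_i g_i + r$ with $\deg_{x_i}(r) < |S_i|$ for every $i$, and track the coefficient of $\mono{\alpha}$ along the way. Since $\alpha_i < |S_i|$ for every $i$, the monomial $\mono{\alpha}$ itself is never chosen for reduction. For another monomial to produce $\mono{\alpha}$ upon reduction, one would need a monomial of total degree at least $\sum_i \alpha_i + \lambda + 1$ to appear in some intermediate polynomial. But every intermediate polynomial has total degree at most $\deg(f)$ (reductions only drop total degree), and the hypothesis $\sum_i \alpha_i \ge \deg(f) - \lambda$ gives $\sum_i \alpha_i + \lambda + 1 > \deg(f)$, ruling this out. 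Hence the coefficient of $\mono{\alpha}$ in $r$ equals its (nonzero) coefficient in $f$, so $r \ne 0$. Since $r$ agrees with $f$ on $S$ and the only polynomial with $x_i$-degree less than $|S_i|$ in each variable that vanishes on $S$ is $0$, $f$ cannot vanish on $S$.

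The main technical obstacle is to formalise the iterative reduction so that the tracking of $\mono{\alpha}$ is rigorous throughout, independent of the order of reductions and immune to coefficient cancellations between chains of reductions starting from different monomials of $f$. The cleanest way is to isolate, as a separate lemma, the statement that during multivariate polynomial division modulo $(g_1,\ldots,g_n)$, the coefficients of monomials $\mono{\beta}$ with $\beta_i < |S_i|$ for all $i$ and with total degree exceeding $\deg(f) - \lambda - 1$ are transferred from $f$ to $r$ unchanged. This is the "monomials are unaffected" principle advertised in the abstract, and refining the argument to also detect per-variable degree drops of the $\widetilde{g}_i$ is what will yield the common generalisation of Nica's and Schauz's theorems.
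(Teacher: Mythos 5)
Your proposal is correct, and it shares the paper's central idea --- run a multivariate polynomial division of $f$ by $G=\{g_1,\ldots,g_n\}$ and show that the term $c_\alpha\mono{\alpha}$ survives into the remainder --- but it implements that idea by a noticeably more elementary route, tailored to the total-degree hypothesis~\eqref{eq:nica}. The paper does not prove Theorem~\ref{thm:nica} directly; it derives it as a corollary of the more general Theorem~\ref{thm:cnv}, and for that purpose it builds the $(G,\lambda,\alpha)$-shading / $(G,\lambda)$-stable monomial formalism (Definitions~\ref{de:shading2} and~\ref{de:stable2}, Theorem~\ref{thm:gstable2}, Corollary~\ref{cor:rem}), which tracks \emph{per-variable} degree drops produced by lacunarity, and it invokes Buchberger's first criterion to recognize $G$ as a Gr\"obner basis so that ``remainder zero iff $f\in\I(S)$'' follows. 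You replace both pieces: the shading/stability machinery collapses to the single observation that each lacunary reduction step drops \emph{total} degree by at least $\lambda+1$, and since $\sum_i\alpha_i\ge\deg(f)-\lambda$, no monomial of total degree $\le\deg(f)$ --- hence no monomial of any intermediate polynomial --- can reduce to $\mono{\alpha}$; and the Gr\"obner basis argument is replaced by the elementary interpolation fact that the only polynomial of $x_i$-degree $<|S_i|$ in each variable that vanishes on the full grid $S$ is zero. Both substitutions are sound, and your monotone-total-degree argument also handles the cancellation worry you flag at the end, since it bounds \emph{every} intermediate polynomial, not just terms originating from a single monomial of $f$. The trade-off is exactly the one you anticipate: the coarse total-degree bookkeeping does not reach Theorem~\ref{thm:cnv}, which genuinely needs the coordinate-wise refinement; the paper pays for that generality with the heavier abstract machinery.
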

We can therefore see that Alon's theorem has already been extended along
four lines in the literature: In one line are the extensions of Ball and Serra
\cite{BS:PCN} and K\'{o}s and R\'{o}nyai \cite{KR:ANFM} including the \emph{multiplicities of zeros}.
Another line is the extension of the theorems to sets that are not grids as in Ball and Serra's extension to \emph{punctured grids}.
Nica's extension applies to \emph{structured grids},
and Laso\'n's, Tao and Vu's and Schauz's extensions
put different conditions on the set of \emph{monomials} appearing
in the polynomial.

In the present paper, we combine these threads and obtain generalizations
of some of these theorems. Essential in our proofs is an analysis of multivariate polynomial division; here we borrow some terms from the theory of Gr\"obner bases \cite{Bu:GBAA,BW:GB,AL:AITG}.

\section{Results}
Our first result incorporates Nica's improvement of the Combinatorial
Nullstellensatz for structured grids (\cite[Theorem~3.1]{Ni:POSG})
into Schauz's result \cite[Theorem~3.2(ii)]{Sc:ASPD} and surprisingly
yields more than the union of the two statements.
For $a, b \in \N_0$, the interval $[a, b]$ is defined
by $[a, b] := \{ x \in \N_0 \mid a \le x \le b\}$. For $a > b$, we then
have $[a,b] = \emptyset$.
  \begin{thm}[Structured Nullstellensatz using conditions on the monomials] \label{thm:cnv}
    Let $n \in \N$ and let $\lambda_1, \ldots, \lambda_n \in \N_0$.
  For each $i \in \ul{n}$, let $S_i$ be a $\lambda_i$-null
  subset of the field $\ob{K}$, and let $S := \bigtimes_{i=1}^n S_i$.
  Let $f \in \kxn$
  and let $(\alpha_1, \ldots, \alpha_n) \in \NON$
  with $\alpha_i < |S_i|$ for all $i \in \ul{n}$
  be such that $f$ contains the monomial
  $x_1^{\alpha_1} \cdots x_n^{\alpha_n}$. Furthermore,
  we assume that for every
  monomial $\mono{\gamma}$ in $\Mon (f) \setminus \{\mono{\alpha}\}$,
  there is $i \in \ul{n}$ such that
    \begin{equation} \label{eq:aich3}
      \gamma_i \in [0, \alpha_i - 1] \union [\alpha_i + 1, |S_i| - 1]
               \union [|S_i|, \alpha_i + \lambda_i].
  \end{equation}
  Then there is $\vb{s} \in S$ such that
  $f(\vb{s}) \neq 0$.
  \end{thm}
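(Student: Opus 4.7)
The plan is to argue by contradiction. Assume $f$ vanishes on $S$ and set $g_i := \prod_{a \in S_i}(x_i - a)$. Since the leading monomials $x_i^{|S_i|}$ are pairwise coprime, $\{g_1,\ldots,g_n\}$ is a Gr\"obner basis in any monomial order; multivariate polynomial division yields a unique normal form $\tilde f$ with $\deg_{x_i}\tilde f < |S_i|$ for every $i$ and $f - \tilde f \in \langle g_1,\ldots,g_n\rangle = \I(S)$, so $\tilde f$ vanishes on $S$ and must therefore be $0$. I would then compute the coefficient of $\mono{\alpha}$ in $\tilde f$ monomial by monomial: because the $g_i$ involve pairwise disjoint variables, the reduction factorises, giving $\widetilde{\mono{\gamma}} = \prod_{i=1}^{n}(x_i^{\gamma_i}\bmod g_i)$, so the $\mono{\alpha}$-coefficient of $\widetilde{\mono{\gamma}}$ is the product over $i$ of the $x_i^{\alpha_i}$-coefficient of $x_i^{\gamma_i}\bmod g_i$. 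For $\gb{\gamma} = \gb{\alpha}$ this equals $1$ because $\alpha_i < |S_i|$; the entire proof reduces to showing the product vanishes for every other $\gb{\gamma} \in \Supp(f)$.

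The decisive technical input is a univariate lemma: if $g \in \kk[x]$ is monic of degree $\nu$ and $\lambda$-lacunary, and $m \in \{\nu,\nu+1,\ldots,\nu+\lambda\}$, then $\deg(x^m \bmod g) \le m-\lambda-1$. I would prove this by induction on $m$. The base case $m = \nu$ is immediate from $x^\nu \equiv -(g - x^\nu) \pmod g$ together with $\deg(g - x^\nu) \le \nu-\lambda-1$, which is exactly the definition of $\lambda$-lacunary. For the inductive step, $x^{m+1} \equiv x\cdot(x^m \bmod g)$ has degree at most $m-\lambda$; when $m \le \nu+\lambda-1$ this is still $< \nu$, so no further reduction is needed and the bound holds at $m+1$.

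The closing case analysis matches \eqref{eq:aich3} exactly. Given $\gb{\gamma} \in \Supp(f)\setminus\{\gb{\alpha}\}$, pick an index $i$ supplied by the hypothesis. If $\gamma_i \in [0,\alpha_i-1]\cup[\alpha_i+1,|S_i|-1]$, then $\gamma_i \ne \alpha_i$ and $\gamma_i < |S_i|$, so $x_i^{\gamma_i} \bmod g_i = x_i^{\gamma_i}$ contributes a zero $x_i^{\alpha_i}$-coefficient. If $\gamma_i \in [|S_i|,\alpha_i+\lambda_i]$, then $\alpha_i \ge \gamma_i - \lambda_i$, and the Key Lemma applied with $m = \gamma_i$, $\lambda = \lambda_i$, $\nu = |S_i|$ forces $\deg(x_i^{\gamma_i} \bmod g_i) \le \gamma_i-\lambda_i-1 < \alpha_i$, again killing that coefficient. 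Hence the coefficient of $\mono{\alpha}$ in $\tilde f$ equals its coefficient in $f$, which is nonzero, contradicting $\tilde f = 0$. The main obstacle is the Key Lemma, and the striking point is that its degree bound is tuned precisely to the third interval of \eqref{eq:aich3}; this is what produces a common generalisation of Schauz's and Nica's theorems rather than merely their conjunction.
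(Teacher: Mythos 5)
Your proposal is correct, and it takes a genuinely different route from the paper. The paper builds a general-purpose machine for tracking monomials through multivariate division by a $\lambda$-lacunary set $G$: it defines $(G,\lambda,\alpha)$-shading and $(G,\lambda)$-stable monomials, proves in Theorem~\ref{thm:gstable2} that stability is preserved by each division step, and then concludes via Corollary~\ref{cor:rem} that the stable monomial $\mono{\alpha}$ survives into the remainder $r$, which must therefore be nonzero. You instead exploit the fact that the generators $g_1,\dots,g_n$ are univariate in \emph{pairwise disjoint} variables: expanding $\prod_i x_i^{\gamma_i} = \prod_i (q_i g_i + r_i)$ shows the normal form of a monomial factorizes as $\prod_i (x_i^{\gamma_i}\bmod g_i)$, which reduces the whole problem to a clean univariate degree bound (your Key Lemma), which you prove by a short induction that is essentially the definition of $\lambda$-lacunary plus repeated multiplication by $x$. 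Your argument is more elementary and more self-contained for this particular theorem; the Key Lemma isolates exactly why the third interval $[\,|S_i|,\alpha_i+\lambda_i\,]$ in~\eqref{eq:aich3} is the right one. The trade-off is generality: the factorization of normal forms breaks down as soon as the Gr\"obner basis contains a genuinely multivariate generator, which is exactly the situation in the paper's punctured-grid Theorem~\ref{thm:ballcn2} (the extra generator $p=\prod_i g_i/l_i$ involves all variables) and in the multiplicity Theorem~\ref{thm:cnv-mult} (where $G^t$ contains products $g_1^{\beta_1}\cdots g_n^{\beta_n}$). The paper's stability machinery is designed to cover those cases too, which is why it is set up in the more abstract form.
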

  The proof is given in Section~\ref{sec:cnv-proof}.
  We note that if we replace~\eqref{eq:aich3}
  by
  \[
  \gamma_i \in [0, \alpha_i - 1],
  \]
  we obtain Laso\'n's result \cite[Theorem~2]{La:AGOC}, 
  if we replace~\eqref{eq:aich3} by
  \[
  \gamma_i \in [0, \alpha_i - 1] \union [\alpha_i + 1, |S_i| - 1],
  \]
  we obtain Schauz's result \cite[Theorem~3.2(ii)]{Sc:ASPD}, and if 
  we replace~\eqref{eq:aich3} by
  \[
  \gamma_i \in [0, \alpha_i - 1] \union [\alpha_i + 1, \alpha_i + \lambda_i],
  \]
  we obtain a result that implies Theorem~\ref{thm:nica}.
  For this purpose, we note that
  \begin{equation} \label{eq:alS}
     [\alpha_i + 1, \alpha_i + \lambda_i] \subseteq
     [\alpha_i + 1, |S_i| - 1] \union [|S_i|, \alpha_i + \lambda_i].
  \end{equation}   
  If $|S_i| - 1 > \alpha_i + \lambda_i$, then the inclusion
  is proper.
  Hence Theorem~\ref{thm:cnv} generalizes these three results.
  An extension to multisets is given in Theorem~\ref{thm:multisetscnii}.
  
  Let us compare Theorem~\ref{thm:cnv} to other Nullstellens\"atze by
  looking at an example: Consider the $4$-null sets
$S_1 = S_2 = \{ z \in \ob{C} \mid z^5 = 1 \}$,
  let $S := S_1 \times S_2$, and let $\mono{\alpha} := x_1^2 x_2^3$.
  Each of the compared results yields a set of
  monomials $M$ such that 
    every polynomial that is a sum
of $x_1^2 x_2^3$ and a linear combination of monomials in $M$
has a nonzero in $S$. In 
Figure~\ref{fig:M} (made with
Mathematica~\cite{WR:MV14}) we draw the representations
$\{ (\gamma_1, \gamma_2) \in \N_0^2 \mid x_1^{\gamma_1} x_2^{\gamma_2} \in M\}$
of these sets of monomials.
\begin{figure} 
     \begin{subfigure}[c]{0.3 \textwidth}
     \includegraphics[width=\textwidth]{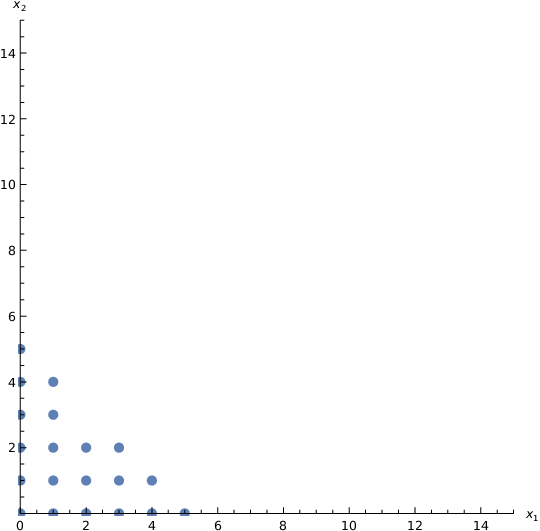}
     \subcaption*{    $M_{\text{A}}$, \\
       \cite[Theorem~1.2]{Al:CN}}
     \end{subfigure} 
     \begin{subfigure}[c]{0.3 \textwidth}
     \includegraphics[width=\textwidth]{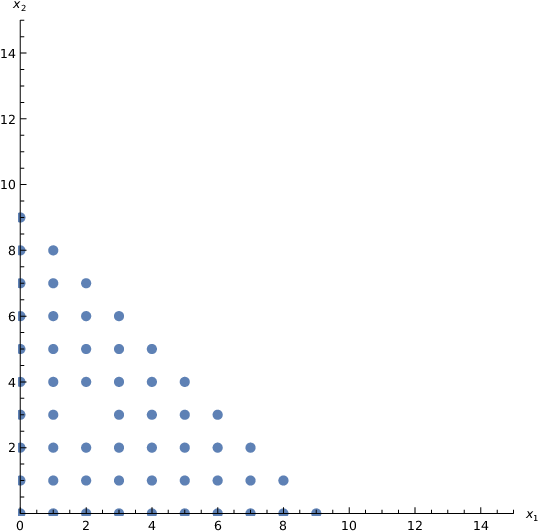}
     \subcaption*{    $M_{\text{N}}$,\\
             {\cite[Theorem~3.1]{Ni:POSG}}}
     \end{subfigure} 
     \begin{subfigure}[c]{0.3 \textwidth}
     \includegraphics[width=\textwidth]{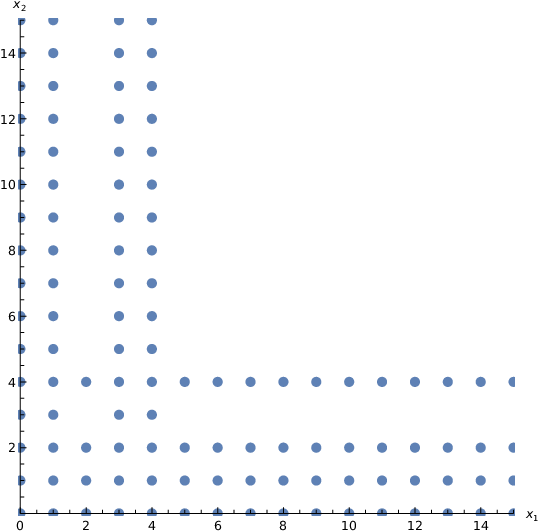}
     \subcaption*{    $M_{\text{S}}$, \\
             \cite[Theorem~3.2(ii)]{Sc:ASPD}}
     \end{subfigure} 
     \begin{subfigure}[c]{0.3 \textwidth}
     \includegraphics[width=\textwidth]{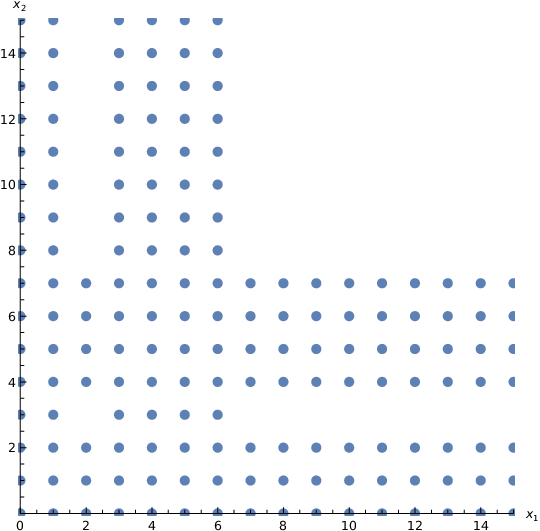}
     \subcaption*{    $M_\text{ASZ}$, \\
              Theorem~\ref{thm:cnv}}
     \end{subfigure}
     \caption{$x_1^2 x_2^3 \,\, +$ any linear combinations of the dotted
       monomials does not vanish on $S = \{(x_1, x_2) \in \C^2 \mid
       x_1^5 = x_2^5 = 1 \}$.}
  \label{fig:M}   
\end{figure}
For an in-depth comparison of allowable monomials for various
versions of the Nullstellensatz, we point the reader to
\cite{Ro:TGCL}.
  
  Our next goal is to incorporate multiplicities.
  Let $\ob{K}$ be a field, let $f \in \kxn$, let
$t \in \N_0$, and let $\vb{c} = (c_1, \ldots, c_n) \in \ob{K}$.
We say that $\vb{c}$ is a \emph{zero of multiplicity $t$}
or a \emph{$t$-fold zero} of $f$ if
the polynomial
$f' := f (c_1 + x_1, \ldots, c_n + x_n)$
lies in the ideal
$\langle x_1, \ldots, x_n \rangle^t$, which holds if and only if
$f'$ contains no monomials
of total degree less than $t$.
We note that every $\vb{c} \in \kn$ is a $0$-fold zero of $f$,
and that $\vb{c}$ is a $1$-fold zero of $f$ if and only if
$f(\vb{c}) = 0$. Furthermore, in our definition, a $t$-fold zero
is a $t'$-fold zero for all $t' \le t$.
\begin{thm}[Structured Nullstellensatz using conditions on the monomials
            with multiplicities] \label{thm:cnv-mult}
    Let $n, t \in \N$ and let $\lambda \in \NON$.
  For each $i \in \ul{n}$, let $S_i$ be a $\lambda_i$-null
  subset of the field $\ob{K}$, and let $S := \bigtimes_{i=1}^n S_i$.
  Let $f \in \kxn$
  and let $(\alpha_1, \ldots, \alpha_n) \in \NON$ be
  such that
  \begin{equation} \label{eq:betat}
    \text{for all }\beta \in \NON : \sum_{i=1}^n \beta_i = t
    \Longrightarrow (\exists i \in \ul{n} : \alpha_i < \beta_i |S_i|)
  \end{equation}
  and
  $f$ contains the monomial
  $x_1^{\alpha_1} \cdots x_n^{\alpha_n}$. Furthermore, we assume that
  for every monomial $\mono{\gamma}$ in $\Mon (f) \setminus \{\mono{\alpha}\}$,
  there is $i \in \ul{n}$ with $\gamma_i \neq \alpha_i$  such that
  \begin{multline} \label{eq:aich3mult}
    \gamma_i \in [0, \alpha_i - 1] \union
                 [\alpha_{i} + 1, \alpha_i + \lambda_i] \text{ or } \\
      \forall \beta \in \NON :\big( (\sum_{j=1}^n \beta_j = t \text{ and } \beta_i > 0)
               \Rightarrow (\exists j \in \ul{n} : \gamma_j < \beta_j |S_j| ) \big).
    \end{multline}
  Then there is $\vb{s} \in S$ such that
  $\vb{s}$ is not a $t$-fold zero of $f$.
\end{thm}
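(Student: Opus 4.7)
The plan is to argue by contradiction: suppose every $\vb{s}\in S$ is a $t$-fold zero of $f$. Set $g_i:=\prod_{a\in S_i}(x_i-a)$ and, for $\beta\in\NON$, $g^{\beta}:=\prod_{i=1}^n g_i^{\beta_i}$. A standard extension of Alon's ideal-generation theorem to multiplicities (used by Ball and Serra in~\cite{BS:PCN}) tells us that the ideal of polynomials vanishing to order $t$ at every point of $S$ is generated by $\{g^{\beta}:|\beta|=t\}$; so we may write $f=\sum_{|\beta|=t} h_\beta\, g^{\beta}$ with $h_\beta\in\kxn$. Consequently $f$ can be expressed as a $\ob{K}$-linear combination of products $\mono{\delta}\cdot g^{\beta}$, which is the form produced by multivariate polynomial division of $f$ by the family $\{g^{\beta}:|\beta|=t\}$. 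The goal is to show that after this division the coefficient of $\mono{\alpha}$ in the remainder equals its coefficient in $f$, which is nonzero, contradicting the fact that the remainder vanishes.

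The key technical input, supplied by the multivariate-division lemma that drives the proof of Theorem~\ref{thm:cnv}, rests on two facts. First, since $S_i$ is $\lambda_i$-null, the polynomial $g_i$ is $\lambda_i$-lacunary of degree $|S_i|$, and a direct induction shows that $g_i^{\beta_i}$ is $\lambda_i$-lacunary of degree $\beta_i|S_i|$; hence every non-leading monomial $\mono{\epsilon}$ of $g^{\beta}$ satisfies, for some $j$ with $\beta_j\ge 1$, $\epsilon_j\le\beta_j|S_j|-\lambda_j-1$, while $\epsilon_j=0$ whenever $\beta_j=0$. Thus reducing a monomial $\mono{\gamma}$ by $g^{\beta}$ (when the leading monomial $\prod_i x_i^{\beta_i|S_i|}$ divides $\mono{\gamma}$) produces residue monomials whose $j$th coordinate, for each $j$ with $\beta_j\ge 1$, is either equal to $\gamma_j$ or at most $\gamma_j-\lambda_j-1$, and is unchanged whenever $\beta_j=0$. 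Second, hypothesis~\eqref{eq:betat} is precisely the statement that $\mono{\alpha}$ is not divisible by the leading monomial of any $g^{\beta}$ with $|\beta|=t$, so $\mono{\alpha}$ itself is never selected for reduction.

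It remains to check that no other monomial $\mono{\gamma}\in\Mon(f)\setminus\{\mono{\alpha}\}$ can produce $\mono{\alpha}$ in its chain of residues. Fix such $\mono{\gamma}$ and let $i$ be the coordinate supplied by~\eqref{eq:aich3mult}. In case~(a), $\gamma_i\in[0,\alpha_i-1]\cup[\alpha_i+1,\alpha_i+\lambda_i]$, and this property is stable under reduction: any reduction either preserves $\gamma_i$ or drops it by at least $\lambda_i+1$, sending $[\alpha_i+1,\alpha_i+\lambda_i]$ into $[0,\alpha_i-1]$ and leaving $[0,\alpha_i-1]$ invariant. In case~(b), $\mono{\gamma}$ is not divisible by the leading monomial of any $g^{\beta}$ with $\beta_i>0$, so the only admissible reductions use $g^{\beta}$ with $\beta_i=0$; these preserve $\gamma_i$ and weakly decrease all other coordinates, so~(b) is again satisfied by each residue and the invariant $\gamma_i\ne\alpha_i$ is maintained. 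In either case, the $i$th coordinate of every descendent of $\mono{\gamma}$ differs from $\alpha_i$, so no descendent equals $\mono{\alpha}$.

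The main obstacle is ensuring that this invariant-based bookkeeping is faithful to the division algorithm not only at the level of monomial support but also at the level of coefficients; this is exactly what the multivariate division lemma used in the proof of Theorem~\ref{thm:cnv} is designed to guarantee, and here it applies to the family $\{g^{\beta}:|\beta|=t\}$ with the same conclusion. Combining everything, the coefficient of $\mono{\alpha}$ in the remainder agrees with its (nonzero) coefficient in $f$; but $f$ lies in the ideal generated by the $g^{\beta}$, forcing the remainder to be $0$, a contradiction.
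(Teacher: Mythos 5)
Your proposal follows the same strategy as the paper: work with $G^t = \{g^\beta : |\beta| = t\}$, appeal to Ball--Serra to identify the $t$-fold vanishing ideal, exploit the $\lambda$-lacunary property of the $g^\beta$ (via closure of $\lambda$-lacunarity under products, which is the paper's Lemma~\ref{lem:proquolac}), and then argue that $\mono{\alpha}$ survives multivariate division by $G^t$. Your case analysis of which $g^\beta$ can reduce a given monomial and how the $i$-th coordinate evolves is a correct informal rendering of the $(G^t,\lambda,\alpha)$-shading machinery the paper uses in Theorem~\ref{thm:gstable2} and Corollary~\ref{cor:rem}.

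However, there is a genuine gap at the very last step. You write that ``$f$ lies in the ideal generated by the $g^\beta$, forcing the remainder to be $0$.'' This inference is false for an arbitrary generating set: membership in $\langle G^t\rangle$ implies zero remainder under multivariate division \emph{only if} $G^t$ is a Gr\"obner basis (otherwise the division algorithm may terminate with a nonzero remainder even though $f$ lies in the ideal---think of dividing $y$ by $\{x, x+y\}$ with $x>y$). The paper devotes Theorem~\ref{thm:gt} precisely to establishing that $G^t$ is a Gr\"obner basis of $\langle G\rangle^t$, an extension of Buchberger's first criterion that requires checking all $S$-polynomials of pairs $g^\beta$, $g^{\beta'}$. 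Without this, showing that $\mono{\alpha}$ has a nonzero coefficient in the remainder does not contradict $f\in\I_t(S)$. You should either prove that $G^t$ is a Gr\"obner basis (as the paper does), or cite the degree bounds in Ball--Serra's Theorem~3.1 and verify carefully that they give a standard representation compatible with your bookkeeping---but some explicit argument is required, and your proposal currently offers none.

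A smaller remark: your appeal to ``the multivariate division lemma'' needs to be applied to the set $G^t$, not to $G$; you correctly observe that the $g^\beta$ are $\lambda$-lacunary, but you should be explicit that the stability/invariant statement you invoke is being used with $G^t$ as the divisor family, since the leading monomials there are $\prod_i x_i^{\beta_i|S_i|}$ rather than single-variable powers, which is precisely where your hypothesis~\eqref{eq:betat} and the second alternative in~\eqref{eq:aich3mult} enter.
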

The proof is given in Section~\ref{sec:cnv-mult}.
Setting $t=1$, we obtain Theorem~\ref{thm:cnv} since
$\vb{s}$ is a $1$-fold zero of $f$ if and
only if $f(\vb{s}) \neq 0$. As a corollary, we obtain a
common generalization of \cite[Theorem~3.1]{Ni:POSG}
and \cite[Corollary~3.2]{BS:PCN}.
\begin{cor}[Structured Nullstellensatz 
            with multiplicities] \label{cor:nica-mult}
    Let $n \in \N$ and let $\lambda \in \N_0$.
  For each $i \in \ul{n}$, let $S_i$ be a $\lambda$-null
  subset of the field $\ob{K}$, and let $S := \bigtimes_{i=1}^n S_i$.
  Let $f \in \kxn$
  and let $(\alpha_1, \ldots, \alpha_n) \in \NON$ be
  such that
  \begin{equation*}
    \text{for all }\beta \in \NON : \sum_{i=1}^n \beta_i = t
    \Longrightarrow (\exists i \in \ul{n} : \alpha_i < \beta_i |S_i|),
  \end{equation*}
  $f$ contains the monomial
  $x_1^{\alpha_1} \cdots x_n^{\alpha_n}$, and 
  \begin{equation} \label{eq:nica2}
     \sum_{i=1}^n \alpha_i \ge \deg (f) - \lambda.
  \end{equation}
  Then there is $\vb{s} \in S$ such that
  $\vb{s}$ is not a $t$-fold zero of $f$.
\end{cor}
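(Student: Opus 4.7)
The plan is to derive Corollary~\ref{cor:nica-mult} directly from Theorem~\ref{thm:cnv-mult} by taking $\lambda_i := \lambda$ for every $i \in \ul{n}$. All hypotheses of the corollary match the setup of Theorem~\ref{thm:cnv-mult} with these uniform parameters except for the monomial condition~\eqref{eq:aich3mult}, so the entire task reduces to deducing~\eqref{eq:aich3mult} from the total-degree bound~\eqref{eq:nica2}. The assumption on $t$-fold divisibility is copied verbatim from Theorem~\ref{thm:cnv-mult}, so it passes through.

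To verify~\eqref{eq:aich3mult}, I would fix an arbitrary monomial $\mono{\gamma} \in \Mon(f) \setminus \{\mono{\alpha}\}$ and rely on the key numerical inequality
\[
   \sum_{i=1}^n \gamma_i \;\le\; \deg(f) \;\le\; \sum_{i=1}^n \alpha_i + \lambda,
\]
which follows from~\eqref{eq:nica2} and the fact that $\mono{\gamma}$ appears in $f$. The analysis splits into two cases. If some coordinate satisfies $\gamma_i < \alpha_i$, then that same $i$ witnesses $\gamma_i \in [0, \alpha_i - 1]$ and $\gamma_i \neq \alpha_i$, so the first disjunct of~\eqref{eq:aich3mult} is satisfied. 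Otherwise $\gamma_i \ge \alpha_i$ for all $i$, and since $\gb{\gamma} \neq \gb{\alpha}$ at least one coordinate $j$ has $\gamma_j > \alpha_j$; the inequality above then forces $\sum_i (\gamma_i - \alpha_i) \le \lambda$, and since every summand is non-negative we get $\gamma_j - \alpha_j \le \lambda$, i.e.\ $\gamma_j \in [\alpha_j + 1, \alpha_j + \lambda]$. Again the first disjunct of~\eqref{eq:aich3mult} is satisfied (with this $j$ and $\lambda_j = \lambda$).

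Once~\eqref{eq:aich3mult} is established for every non-leading monomial, Theorem~\ref{thm:cnv-mult} applies and delivers the desired $\vb{s} \in S$ that is not a $t$-fold zero of $f$. There is no real obstacle; the only point requiring attention is the bookkeeping that ensures the witness index $i$ in~\eqref{eq:aich3mult} satisfies $\gamma_i \neq \alpha_i$, but both cases in the argument above produce such an index automatically. In particular, we never need the second, more technical disjunct of~\eqref{eq:aich3mult}, which reflects the fact that Corollary~\ref{cor:nica-mult} controls only the total degree of $f$ rather than the finer structure that Theorem~\ref{thm:cnv-mult} allows.
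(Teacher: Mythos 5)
Your proposal is correct and takes essentially the same approach as the paper: specialize Theorem~\ref{thm:cnv-mult} to $\lambda_1 = \cdots = \lambda_n = \lambda$ and verify that the total-degree hypothesis~\eqref{eq:nica2} forces, for every $\mono{\gamma} \in \Mon(f)\setminus\{\mono{\alpha}\}$, some coordinate $i$ with $\gamma_i \neq \alpha_i$ and $\gamma_i \in [0,\alpha_i-1]\cup[\alpha_i+1,\alpha_i+\lambda]$. The paper argues by contradiction where you argue directly (splitting on whether some $\gamma_i < \alpha_i$), but the content is the same.
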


Next, we consider generalizations from grids to \emph{punctured grids}.
\begin{thm}[Structured Nullstellensatz for punctured grids using
            conditions on the monomials] \label{thm:ballcn2}
  Let $X = \bigtimes_{i=1}^n X_i, Y = \bigtimes_{i=1}^n Y_i$ be grids over the field $\kk$ with $Y_i \subseteq X_i$ for all $i \in \ul{n}$, let
  $P := X \setminus Y$, and let
  $\lambda_1, \ldots, \lambda_n \in \N_0$. We assume that
  for each $i \in \ul{n}$, both $X_i$ and $Y_i$ are $\lambda_i$-null.
    Let $f \in \kxn$
  and let $(\alpha_1, \ldots, \alpha_n) \in \Supp (f)$ be such that
  \begin{enumerate}
  \item \label{it:al1} for all $i \in \ul{n}$, $\alpha_i < |X_i|$,
  \item \label{it:al2} there exists $i \in \ul{n}$ such that $\alpha_i <  |X_i| - |Y_i|$,
  \item \label{it:al3} for all $\mono{\gamma} \in \Mon (f)$, there exists
    $i \in \ul{n}$ such that at least one of the following three conditions
    holds:
    \begin{enumerate}
    \item \label{it:al3a}
      $\gamma_i \in [0, \alpha_i - 1] \union [\alpha_i + 1, \alpha_i + \lambda_i]$,
    \item \label{it:al3b}
      $\gamma_i \in [\alpha_i + 1, |X_i| - 1]$ and $|X_i| = |Y_i|$,
    \item \label{it:al3c}  
      $\gamma_i \in [\alpha_i + 1, |X_i| - 1]$ and there is
        $j \in \ul{n}$ with $\gamma_j < |X_j| - |Y_j|$.
    \end{enumerate}
 \end{enumerate}   
  Then there is $\vb{z} \in P$ with $f (\vb{z}) \neq 0$.
\end{thm}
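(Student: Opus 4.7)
The plan is to reduce this statement to the non-punctured structured Nullstellensatz Theorem~\ref{thm:cnv} by multiplying $f$ with a univariate polynomial that vanishes on $Y$. I will suppose for contradiction that $f$ vanishes on $P = X \setminus Y$, and derive a contradiction by producing a polynomial $F$ to which Theorem~\ref{thm:cnv} applies on the full grid $X$.

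Using condition~(2), I fix $j_0 \in \ul{n}$ with $\alpha_{j_0} < |X_{j_0}| - |Y_{j_0}|$, and define
\[
q(x_{j_0}) := \prod_{b \in Y_{j_0}}(x_{j_0} - b), \qquad F := f \cdot q.
\]
For any $\vb{s} \in X$ either $\vb{s} \in P$, whence $f(\vb{s}) = 0$, or $\vb{s} \in Y$, whence $s_{j_0} \in Y_{j_0}$ and $q(s_{j_0}) = 0$; hence $F$ vanishes on $X$. Since $Y_{j_0}$ is $\lambda_{j_0}$-null by hypothesis, $q$ is $\lambda_{j_0}$-lacunary: its coefficient at $x_{j_0}^{|Y_{j_0}|}$ is $1$, while its coefficients at $x_{j_0}^{|Y_{j_0}|-1}, \ldots, x_{j_0}^{|Y_{j_0}|-\lambda_{j_0}}$ all vanish. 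As witness for Theorem~\ref{thm:cnv} applied to $F$ on $X$, I propose the exponent vector $\alpha'$ obtained from $\alpha$ by replacing its $j_0$-th entry with $\alpha_{j_0} + |Y_{j_0}|$; by~(1) and~(2), $\alpha'_i < |X_i|$ for every $i$. The coefficient of $\mono{\alpha'}$ in $F$ decomposes as a sum over $k \in [0, |Y_{j_0}|]$ in which the $k=0$ term equals $c_\alpha \ne 0$ (its factor from $q$ being the leading $1$), and the terms with $k \in [1, \lambda_{j_0}]$ vanish by lacunarity of $q$.

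To verify the Theorem~\ref{thm:cnv} monomial condition for $F$ with witness $\mono{\alpha'}$, each $\gamma' \in \Mon(F) \setminus \{\mono{\alpha'}\}$ is written as a shift of some $\gamma \in \Mon(f)$ by $k$ in the $j_0$-th coordinate, $k \in [0, |Y_{j_0}|]$; I case-split on which subcase of condition~(3) applies to $\gamma$. If some $i \neq j_0$ witnesses subcase~(a) or~(b), then the same $i$ witnesses the Theorem~\ref{thm:cnv} interval for $\gamma'$, since $\gamma'_i = \gamma_i$ and $\alpha'_i = \alpha_i$. If the witnessing index is $j_0$ or if~(c) applies, the interval~$[|X_i|, \alpha_i + \lambda_i]$ available in Theorem~\ref{thm:cnv}---which exploits the $\lambda_i$-nullity of $X_i$---absorbs the potentially higher-degree monomials introduced by the multiplication. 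Once the monomial condition is verified, Theorem~\ref{thm:cnv} supplies some $\vb{s} \in X$ with $F(\vb{s}) \neq 0$, contradicting $F \equiv 0$ on $X$.

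The main obstacle will be the nonzero-coefficient analysis for $\mono{\alpha'}$ in $F$: the contributions for $k \in [\lambda_{j_0}+1, |Y_{j_0}|]$ correspond to monomials of the form obtained from $\alpha$ by raising the $j_0$-coordinate by $k$, which by condition~(3) are forced into subcase~(c) (subcases (a) and (b) being excluded by the size of $k$ and by $|X_{j_0}| > |Y_{j_0}|$, respectively), yet they may still contribute and conspire to cancel $c_\alpha$. Overcoming this likely requires either choosing $j_0$ carefully among the indices satisfying~(2)---for instance, an index for which the existential $j$ in~(c) can only be $j_0$ itself, thereby bounding $k$ strictly below $|X_{j_0}| - |Y_{j_0}| - \alpha_{j_0}$---or replacing the naive $\alpha'$ with a sharper maximal choice along the $j_0$-direction, and possibly iterating the construction by multiplying with additional univariate factors $q_{j'}$ for other indices $j'$ in the ``good set'' of~(2).
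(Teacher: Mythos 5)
The proposal takes a genuinely different route from the paper and has two real gaps, one of which you flag honestly and one of which you do not.

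The paper's proof does not reduce to the unpunctured case at all. Instead, Theorem~\ref{thm:ballcn1} exhibits $G' := \{g_1, \ldots, g_n, p\}$ with $p := \prod_{i=1}^n g_i/l_i$ as a Gr\"obner basis of $\I(X \setminus Y)$, and the proof of Theorem~\ref{thm:ballcn2} then shows that $\mono{\alpha}$ is a $(G',\lambda)$-stable monomial in $f$ and applies Corollary~\ref{cor:rem} directly. Your multiply-and-reduce strategy avoids the punctured Gr\"obner basis entirely, which would be a nice simplification if it worked.

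Beyond the coefficient-cancellation issue you correctly flag as the ``main obstacle,'' the monomial condition of Theorem~\ref{thm:cnv} for $F = fq$ with witness $\alpha'$ can simply fail, contrary to your claim that the interval $[\,|X_{j_0}|,\ \alpha'_{j_0}+\lambda_{j_0}\,]$ ``absorbs the potentially higher-degree monomials.'' This interval equals $[\,|X_{j_0}|,\ \alpha_{j_0}+|Y_{j_0}|+\lambda_{j_0}\,]$, which is \emph{empty} whenever $\alpha_{j_0} + |Y_{j_0}| + \lambda_{j_0} < |X_{j_0}|$; since assumption~\eqref{it:al2} gives $\alpha_{j_0} < |X_{j_0}| - |Y_{j_0}|$, this failure is guaranteed as soon as $\lambda_{j_0} = 0$. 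Concretely: take $n=2$, $\lambda_1=\lambda_2=0$, $X_1=X_2$ of size $a\ge 3$, $Y_1=Y_2$ singletons, $\alpha=(0,0)$, $j_0=1$. Then $\gamma = (a-1,0)$ is allowed in $\Supp(f)$ by subcase~\eqref{it:al3c} (witness $i=1$, existential $j=2$). Multiplying by $q = x_1 - y$ with $m = |Y_1| = 1$ gives $\gamma' = (a,0)$ and $\alpha' = (1,0)$. For $i=1$ the Theorem~\ref{thm:cnv} interval is $[0,0]\cup[2,a-1]\cup\emptyset$, and for $i=2$ it is $\emptyset\cup[1,a-1]\cup\emptyset$; neither contains $\gamma'_1 = a$ nor $\gamma'_2 = 0$. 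So the hypotheses of Theorem~\ref{thm:cnv} cannot be verified for $F$ with witness $\alpha'$, and the contradiction argument breaks down. Your suggested remedies (choosing $j_0$ more carefully, iterating $q_{j'}$ over further indices) would still have to overcome exactly this phenomenon, since the source of the trouble is that subcase~\eqref{it:al3c} permits $\gamma_{j_0}$ up to $|X_{j_0}|-1$ while the structured interval on the unpunctured grid $X$ caps admissible exponents at $\alpha'_{j_0} + \lambda_{j_0}$, which need not reach $|X_{j_0}| + |Y_{j_0}| - 1$.
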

The proof is given in Section~\ref{sec:punct}.
As a consequence, we obtain:
\begin{cor}[Structured Nullstellensatz for punctured grids] \label{cor:ballcn2cor}
Let $X = \bigtimes_{i=1}^n X_i, Y = \bigtimes_{i=1}^n Y_i$ be grids over the field $\kk$ with $Y_i \subseteq X_i$ for all $i \in \ul{n}$, let
  $P := X \setminus Y$, and let
  $\lambda \in \N_0$. We assume that
  for each $i \in \ul{n}$, both $X_i$ and $Y_i$ are $\lambda$-null.
    Let $f \in \kxn$
  and let $(\alpha_1, \ldots, \alpha_n) \in \Supp (f)$ be such that
  \begin{enumerate}
  \item \label{it:al1cor} for all $i \in \ul{n}$, $\alpha_i < |X_i|$,
  \item \label{it:al2cor} there exists $i \in \ul{n}$ such that $\alpha_i <  |X_i| - |Y_i|$,
  \item \label{it:al3new}
         $\sum_{i=1}^n \alpha_i \ge \deg (f) - \lambda$.
  \end{enumerate}
  Then there is $\vb{z} \in P$ with $f (\vb{z}) \neq 0$.
\end{cor}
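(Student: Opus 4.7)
The plan is to deduce Corollary~\ref{cor:ballcn2cor} from Theorem~\ref{thm:ballcn2} by setting $\lambda_1 = \cdots = \lambda_n = \lambda$ and verifying the three hypotheses of that theorem. Conditions~\ref{it:al1cor} and~\ref{it:al2cor} of the corollary are verbatim conditions~\ref{it:al1} and~\ref{it:al2} of the theorem, so the only real work is to confirm the monomial condition~\ref{it:al3}.

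To do so, fix $\mono{\gamma} \in \Mon(f) \setminus \{\mono{\alpha}\}$ and aim to produce an index $i \in \ul{n}$ satisfying the first subcondition~\ref{it:al3a}, namely $\gamma_i \in [0,\alpha_i-1] \cup [\alpha_i+1,\alpha_i+\lambda]$; subconditions~\ref{it:al3b} and~\ref{it:al3c} will not be invoked at all. The natural dichotomy is whether some coordinate of $\gamma$ already falls strictly below the corresponding coordinate of $\alpha$ (in which case $\gamma_i \in [0,\alpha_i-1]$ is immediate) or whether $\gamma_j \ge \alpha_j$ for every $j \in \ul{n}$. In the latter case I combine $\mono{\gamma} \in \Mon(f)$ with the Nica-type bound~\ref{it:al3new} to obtain
\[
\sum_{j=1}^n (\gamma_j - \alpha_j) \;\le\; \deg(f) - \sum_{j=1}^n \alpha_j \;\le\; \lambda.
\]
Because $\gamma \neq \alpha$ while $\gamma_j \ge \alpha_j$ for all $j$, I can pick $i_0$ with $\gamma_{i_0} > \alpha_{i_0}$; non-negativity of every summand on the left forces $\gamma_{i_0} - \alpha_{i_0} \le \lambda$, so $\gamma_{i_0} \in [\alpha_{i_0}+1,\alpha_{i_0}+\lambda]$ and~\ref{it:al3a} holds at $i_0$.

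There is no real obstacle: the argument is a single case distinction once Theorem~\ref{thm:ballcn2} is at hand. The point worth recording is that the Nica-type degree inequality~\ref{it:al3new} is by itself strong enough to push every deviating monomial into the ``upper Nica window'' $[\alpha_i+1,\alpha_i+\lambda]$ on at least one coordinate, so the combinatorial alternatives~\ref{it:al3b} and~\ref{it:al3c} are never invoked here; these two alternatives are precisely what makes Theorem~\ref{thm:ballcn2} strictly stronger than the present corollary.
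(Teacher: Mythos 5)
Your proof is correct and takes essentially the same approach as the paper: both deduce the corollary from Theorem~\ref{thm:ballcn2} with $\lambda_1 = \cdots = \lambda_n = \lambda$ by showing that the Nica-type degree bound forces subcondition~\ref{it:al3a} of hypothesis~\ref{it:al3}, never invoking~\ref{it:al3b} or~\ref{it:al3c}. The only difference is stylistic: the paper argues by contradiction (supposing~\ref{it:al3} fails and deriving $\deg(f) > \sum_i \alpha_i + \lambda$), whereas you argue directly via the same case split on whether some $\gamma_i < \alpha_i$.
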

The investigation of punctured grids also yields the following extension
of the Alon-F\"uredi Nonzero Counting Theorem
\cite[Theorem~5]{AF:CTCB}. The recent book manuscript
by Clark \cite{Cl:ATCW} contains other extensions and was the basis
for ours. For $f \in \kxn$, we write
$\V(f)$ for the set $\{\vb{a} \in \kn \mid f(\vb{a}) = 0\}$
of zeros of $f$.
\begin{thm}[Nonzero counting for punctured grids] \label{thm:AFC-punct}
  Let $X = \bigtimes_{i=1}^n X_i$ and
      $Y = \bigtimes_{i=1}^n Y_i$ be grids over the field $\kk$, let
  $P := X \setminus Y$, and let
  $f \in \kxn \setminus \{0\}$. For $i \in \ul{n}$, let
  $a_i := |X_i|$ and $b_i := |Y_i|$. %
  \begin{enumerate}
    \item \label{it:bin1}
      Let
    \begin{multline*} A := \{ (y_1, \ldots, y_n) \in \N^n \mid \\
    \forall i \in \ul{n} : 1 \le y_i \le a_i, \,\,  %
    \exists i \in \ul{n} : y_i > b_i, \text{ and }
    \sum_{i=1}^n y_i \ge \sum_{i=1}^n a_i - \deg (f) \}.
    \end{multline*}
    If $P \ohne \V(f) \neq \emptyset$, then 
    \begin{equation} \label{eq:Zf}
       |P \ohne V(f)| \ge \min \{ \prod_{i=1}^n y_i - \prod_{i=1}^n \min (y_i, b_i)
       \mid (y_1, \ldots, y_n) \in A \}.
    \end{equation} 
  \item \label{it:bin2}
    We assume that for all $i \in \ul{n}$,
    we have $\deg_{x_i} (f) < a_i$. 
    Let
    \[
       B := \{(y_1, \ldots, y_n) \in \N^n \mid
         \forall i \in \ul{n} : a_i - \deg_{x_i} (f) \le y_i \le a_i, \text{ and }
    \sum_{i=1}^n y_i = \sum_{i=1}^n a_i - \deg (f) \}.
    \]
    If $P \ohne \V(f) \neq \emptyset$, then  
    \begin{equation} \label{eq:Zf2} 
       |P \ohne V(f)| \ge \min \{ \prod_{i=1}^n y_i - \prod_{i=1}^n \min (y_i, b_i)
       \mid (y_1, \ldots, y_n) \in B \}.
    \end{equation}   
  \end{enumerate}
\end{thm}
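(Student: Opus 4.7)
The plan is to prove part~\eqref{it:bin2} by induction on $n$, following Clark's inductive approach for the classical Alon--F\"uredi theorem, and then to derive part~\eqref{it:bin1} by reducing $f$ modulo the grid ideal of $X$ and appealing to part~\eqref{it:bin2}.

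For part~\eqref{it:bin2}, the base case $n = 1$ is a direct count: with $d := \deg(f) < a_1$, the polynomial $f$ has at most $d$ roots in $X_1$, so $|X_1 \ohne \V(f)| \ge a_1 - d$, and since $|Y_1 \ohne \V(f)| \le b_1$, we obtain $|P \ohne \V(f)| \ge \max\{0,\, a_1 - d - b_1\}$, which matches the value at the single element $y_1 = a_1 - d$ of $B$. For the inductive step, I view $f$ as a polynomial in $x_n$ and partition the nonzeros of $f$ in $P$ by the last coordinate: with $X' := \bigtimes_{i<n} X_i$, $Y' := \bigtimes_{i<n} Y_i$, $P' := X' \ohne Y'$, and $f_c(\vb{x}') := f(\vb{x}', c)$, we have
\[
   |P \ohne \V(f)| \;=\; \sum_{c \in X_n \ohne Y_n} |X' \ohne \V(f_c)| \;+\; \sum_{c \in Y_n} |P' \ohne \V(f_c)|.
\]
The classical Alon--F\"uredi theorem bounds the first summand for each $c$ with $f_c \not\equiv 0$; the induction hypothesis bounds the second. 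The at most $\deg_{x_n}(f) < a_n$ slices on which $f_c \equiv 0$ contribute zero, so they only cost some of the available degree budget.

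The principal technical hurdle is the ensuing \emph{combinatorial optimization}: after summing the slice-level bounds over all $c \in X_n$, the total-degree budget must be apportioned so that the aggregate bound matches the ``balls-in-bins'' minimum over $B$. This parallels Clark's analysis in the unpunctured setting but is complicated by the interplay of two slice-bound shapes: $\prod y'_i$ from unpunctured slices (where $c \in X_n \ohne Y_n$) and $\prod y'_i - \prod \min(y'_i, b_i)$ from punctured ones (where $c \in Y_n$). A case split based on whether the minimizing $y \in B$ satisfies $y_n > b_n$, together with the concavity-type properties of the min-product function, appears to be the natural way to close this step.

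Finally, for part~\eqref{it:bin1}, reduce $f$ modulo $\I(X) = \langle \prod_{x \in X_i}(x_i - x) : i \in \ul{n}\rangle$ via multivariate division to obtain $\tilde f$ with $\deg_{x_i}(\tilde f) < a_i$ for all $i$, $\deg(\tilde f) \le \deg(f)$, and $\tilde f = f$ on $X \supseteq P$. Apply part~\eqref{it:bin2} to $\tilde f$. Any $y \in B(\tilde f)$ yielding a positive bound automatically lies in $A(f)$, because the relaxed sum condition $\sum y_i \ge \sum a_i - \deg(f)$ is satisfied and positivity of the bound forces $\exists i : y_i > b_i$; hence whenever $\min_{B(\tilde f)} > 0$, the desired inequality follows. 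The remaining case $\min_{B(\tilde f)} = 0$ while $\min_{A(f)} > 0$ is handled separately by exploiting the hypothesis $P \ohne \V(f) \neq \emptyset$, which already yields $|P \ohne \V(f)| \ge 1$, together with an analysis of exactly which $y \in A(f)$ can attain a small positive bound.
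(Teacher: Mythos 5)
Your proposal takes a genuinely different route from the paper, but both halves of it contain real gaps that you acknowledge without resolving, and one of them is more serious than you seem to realize.

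The paper does not prove~\eqref{it:bin2} by induction and does not derive~\eqref{it:bin1} from~\eqref{it:bin2}. Instead, both parts follow directly from a single monomial-counting bound (Theorem~\ref{tm:afcp}): if $g\in \I(P)+\langle f\rangle$ has $\LM(g)=x_1^{e_1}\cdots x_n^{e_n}$ with $e_i<a_i$, then $|P\ohne\V(f)|\ge \prod(a_i-e_i)-\prod\min(b_i,a_i-e_i)$. This bound in turn rests on Clark's formula (Theorem~\ref{thm:cf}) and the description of $\cofilter{\I(P)}$ obtained from the fact that $G'=\{g_1,\ldots,g_n,\prod_i g_i/l_i\}$ is a Gr\"obner basis of $\I(P)$ (Theorem~\ref{thm:ballcn1}, Lemma~\ref{lem:punctured}). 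For part~\eqref{it:bin2} the paper applies Theorem~\ref{tm:afcp} to $g=f$; for part~\eqref{it:bin1} it applies it to the remainder $r$ of $f$ upon division by $G'$ with a degree-compatible order, so $\deg(r)\le\deg(f)$ and $\LM(r)$ is automatically \emph{not} divisible by $\prod x_i^{a_i-b_i}$, which yields the needed $\exists i:\,y_i>b_i$ with no case analysis at all. This design dissolves both of your hurdles.

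Concretely, the gaps in your plan are as follows. First, the ``combinatorial optimization'' in the inductive step of~\eqref{it:bin2} is left entirely open: balancing a degree budget across $a_n$ slices, where $|X_n\ohne Y_n|$ of the slices carry an unpunctured bound $\prod y'_i$ and $|Y_n|$ carry a punctured bound $\prod y'_i-\prod\min(y'_i,b_i)$, is not a routine extension of the classical Alon--F\"uredi inequality; you would need to establish the relevant concavity/monotonicity facts about $y\mapsto\prod y_i-\prod\min(y_i,b_i)$ over the integer box in $B$ before the slice bounds can be aggregated. Second, your derivation of~\eqref{it:bin1} reduces $f$ modulo $\I(X)$ rather than modulo $\I(P)$. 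That only controls the partial degrees, not the leading monomial relative to $\I(P)$; it is entirely possible that $\LM(\tilde f)$ is divisible by $\prod x_i^{a_i-b_i}$, in which case the $y$ coming from $\LEXP(\tilde f)$ has $y_i\le b_i$ for all $i$ and your part~\eqref{it:bin2} bound is $0$. Your fallback ``$P\ohne\V(f)\neq\emptyset$ gives at least $1$'' does not close this, since $\min_A$ may exceed $1$; the sentence promising an ``analysis of exactly which $y\in A(f)$ can attain a small positive bound'' is where the missing argument would have to go. Reducing by a Gr\"obner basis of $\I(P)$ instead, as the paper does, removes the bad case at the source.
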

The proof is given in Section~\ref{sec:AF}.
Let us give an overview how these results are proved.
Theorem~\ref{thm:cnv} claims that a polynomial containing
certain monomials does not vanish on the whole grid $S$.
From \cite[Theorem~1.1]{Al:CN} we know that the ideal $\I (S)$ of polynomials
vanishing on $S$ is generated by
$\{ g_i \mid i \in \ul{n} \}$ with
$g_i := \prod_{a \in S_i} (x_i - a)$. The polynomial $g_i$ has leading
term $x_i^{|S_i|}$. For proving $f \not\in \I(S)$, we show that its
remainder $r$ modulo $G = \{ g_i \mid i \in \ul{n} \}$ after multivariate
polynomial division by $G$ is nonzero. The conditions on $f$ and the
$g_i$'s ensure that the monomial $\mono{\alpha}$ can never
be reduced in the course of the division, and that all other monomials
in $f$ have too small exponents to be able to produce a term
$c_{\alpha} \mono{\alpha}$ that would allow to cancel $\mono{\alpha}$
before it stays -- remains -- in the remainder.
Such an approach only works if all polynomials in $\I(S)$ will
have remainder $0$ after multivariate division by $G$. This is guaranteed
when $G$ is not only a generating set, but furthermore even
a \emph{Gr\"obner basis} of $\I(S)$;
for the generating set $G$ considered in the proof of
Theorem~\ref{thm:cnv} this is ensured by the fact that leading monomials
of the polynomials in $G$ are coprime to each other.
In order to state these ideas
precisely, we will make use of some notions from the arithmetic
of multivariate polynomials, in particular of multivariate polynomial
division, which is one of the basics
of the theory of Gr\"obner bases.

\section{Lacunary multivariate polynomials} \label{sec:lacunary2}
In this section, we extend the definition of lacunary polynomials
to multivariate polynomials.

We sort monomials using \emph{admissible orderings:} A linear
order $\lea$ on $\NON$ is \emph{admissible} if it is total,
and for all $\alpha, \beta, \gamma \in \NON$, we have
$(\alpha \sqsubseteq \beta \Rightarrow \alpha \le_a \beta)$
and $(\alpha \le_a \beta \Rightarrow \alpha + \gamma \le_a \beta + \gamma)$.
When $\alpha \lea \beta$, we will also write $\mono{\alpha} \lea \mono{\beta}$,
and $\alpha <_a \beta$ stands for ($\alpha \lea \beta$ and $\alpha \neq
\beta$).
If $\alpha \in \NON$ is maximal in
$\Supp (f)$ with respect to $\le_a$,
then $\mono{\alpha}$ is called
the \emph{leading monomial of $f$} and abbreviated by $\LM(f)$,
and $\alpha$ is the \emph{leading exponent} or \emph{multidegree} of $f$,
abbreviated by  $\LEXP (f)$ and $\md (f)$.
The coefficient $c_{\alpha}$ of the leading monomial $\mono{\alpha}$ is
the \emph{leading coefficient}, abbreviated as $\LC(f)$,
and $c_{\alpha} \mono{\alpha} = \LC (f) \cdot \LM (f) =
    \LC(f) \mono{\LEXP (f)}$
is the \emph{leading term}
of $f$, denoted by $\LT (f)$.
Every admissible ordering is a well ordering, i.e., it is total and
has no infinite descending chains (cf. \cite[Theorem~5.5(ii)]{BW:GB};
a proof can also be found, e.g., in
the survey \cite{Ai:SGBA} (Lemma~9.2)).

\begin{de} Let $\lambda \in \NON$, and
  let $g \in \kxn$. The polynomial $g$ is
  \emph{$\lambda$-lacunary} if
  it contains a monomial $\mono{\mu}$ such that
  for each $\mono{\nu} \in \Mon(g)$ and for
  each $i \in \ul{n}$, we have
  $\nu_i < \mu_i - \lambda_i$ or $\nu_i = \mu_i$.
  A set $G \subseteq \kxn$ is called $\lambda$-lacunary if
  every $g \in G$ is $\lambda$-lacunary.
\end{de}
We note that then $\nu \sqsubseteq \mu$ for all
$\mono{\nu} \in \Mon (g)$, and therefore $\mono{\mu}$ is the leading
monomial of $g$ with respect to every admissible monomial ordering
$\lea$.
\begin{lem} \label{lem:proquolac}
  Let $\lambda \in \NON$, and let $f,g,h \in \kxn$ with
  $f = gh$.
  Then we have:
  \begin{enumerate}
  \item \label{it:pq1} If $g$ and $h$ are $\lambda$-lacunary, then
                     $f$ is $\lambda$-lacunary.
  \item \label{it:pq2} If $f$ and $g$ are $\lambda$-lacunary, then
    $h$ is $\lambda$-lacunary.
  \end{enumerate}
\end{lem}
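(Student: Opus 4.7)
The plan is to prove (1) by a direct expansion of $gh$ and (2) by a per-coordinate reduction combined with integral-domain cancellation.

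For (1), I would let $\mono{\mu} \in \Mon(g)$ and $\mono{\sigma} \in \Mon(h)$ be the witnesses of $\lambda$-lacunarity with nonzero coefficients $c_\mu$ and $d_\sigma$, and claim that $\mono{\mu + \sigma}$ witnesses the $\lambda$-lacunarity of $f = gh$. Writing $gh = \sum_{\mono{\nu} \in \Mon(g),\, \mono{\xi} \in \Mon(h)} c_\nu d_\xi \mono{\nu + \xi}$ and fixing a coordinate $i$, a short case analysis using $\nu_i \in \{\mu_i\} \cup [0, \mu_i - \lambda_i - 1]$ and $\xi_i \in \{\sigma_i\} \cup [0, \sigma_i - \lambda_i - 1]$ shows that $\nu_i + \xi_i$ either equals $\mu_i + \sigma_i$ (forcing $\nu_i = \mu_i$ and $\xi_i = \sigma_i$) or is strictly less than $\mu_i + \sigma_i - \lambda_i$. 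This simultaneously confirms that every exponent appearing in $f$ satisfies the lacunary condition against $\mu + \sigma$ and that the coefficient of $\mono{\mu+\sigma}$ in $f$ equals $c_\mu d_\sigma \neq 0$, so the witness genuinely lies in $\Mon(f)$.

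For (2), I would reduce to a one-variable statement in each coordinate. Let $\mono{\tau}$ and $\mono{\mu}$ witness the lacunarity of $f$ and $g$. Fix $i \in \ul{n}$ and view $f, g, h$ as polynomials in $x_i$ over the integral domain $R_i := \kk[x_1, \ldots, x_{i-1}, x_{i+1}, \ldots, x_n]$. The lacunary hypotheses translate to $g = a\, x_i^{\mu_i} + g'$ and $f = b\, x_i^{\tau_i} + f'$ with $a, b \in R_i \setminus \{0\}$, $\deg_{x_i}(g') < \mu_i - \lambda_i$, and $\deg_{x_i}(f') < \tau_i - \lambda_i$; since $R_i$ is a domain, $\deg_{x_i}(h) = \tau_i - \mu_i =: \sigma_i$. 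Writing $h = \sum_k h_k x_i^k$ with $h_k \in R_i$ and comparing coefficients of $x_i^k$ in $gh = f$ for each $k \in [\tau_i - \lambda_i, \tau_i - 1]$, the contribution of $g' h$ vanishes by the degree bound, so the equation collapses to $a \cdot h_{k - \mu_i} = 0$. Integral-domain cancellation forces $h_{k - \mu_i} = 0$ for every $k - \mu_i \in [\sigma_i - \lambda_i, \sigma_i - 1]$; running this argument over all $i$ shows that every monomial exponent $\xi$ of $h$ satisfies, coordinate-wise, either $\xi_i = \sigma_i$ or $\xi_i < \sigma_i - \lambda_i$, where $\sigma := \tau - \mu$.

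The main subtlety I foresee is showing that $\mono{\sigma}$ is actually an element of $\Mon(h)$, since the definition of $\lambda$-lacunary requires an honest witness rather than just an exponent vector satisfying the inequalities. I would dispatch this with the observation immediately following the definition: any admissible monomial ordering $\lea$ satisfies $\LM(g) = \mono{\mu}$ and $\LM(f) = \mono{\tau}$, and the identity $\LM(gh) = \LM(g) \cdot \LM(h)$ (valid because $\kxn$ is an integral domain) then yields $\LM(h) = \mono{\sigma}$, so $\mono{\sigma} \in \Mon(h)$. Finally, the degenerate case $h = 0$ is ruled out because lacunary polynomials are nonzero by definition and $\kxn$ has no zero divisors.
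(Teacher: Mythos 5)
Your proposal is correct and, like the paper, reduces both parts to a coordinate-wise comparison of $x_i$-coefficients in the product $gh$ (working over the coefficient ring $\kk[x_1,\ldots,x_{i-1},x_{i+1},\ldots,x_n]$), so the approach is essentially the same. The only notable difference is in part (2): you argue directly by solving $a\cdot h_{k-\mu_i}=0$ for the forbidden indices, whereas the paper argues by contradiction, fixing a witnessing monomial of $h$ in the forbidden range and showing the corresponding coefficient of $f$ is nonzero; both reduce to the same cancellation computation.
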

\begin{proof}
  Let $\mono{\mu} := \LM (f)$, $\mono{\nu} := \LM (g)$ and
  $\mono{\rho} := \LM (h)$. Then $\mu = \nu + \rho$.
  For showing~\eqref{it:pq1}, we 
  fix $i \in \ul{n}$ and show that for each
   monomial $\mono{\alpha} \in \Mon (f)$, we
   have $\alpha_i = \mu_i$ or $\alpha_i < \mu_i - \lambda_i$.
   We write $f$ as $\sum_{j=0}^{\mu_i} f_j x_i^j$, where
   $f_j \in \kk [x_1,\ldots, x_{i-1},x_{i+1}, \ldots, x_n]$
   for all $j$ with $0 \le j \le \mu_i$, and we set $f_j = 0$
   for $j > \mu_i$. Similarly,
   \begin{equation} \label{eq:ghi}
   g = \sum_{j=0}^{\nu_i} g_j x_i^j \text{ and }
   h = \sum_{j=0}^{\rho_i} h_j x_i^j.
   \end{equation}
   Since $g$ and $h$ are $\lambda$-lacunary,
   we have $g_j = 0$ for all $j$ with
   $\nu_i - \lambda_i \le j \le \nu_i - 1$ and
   $h_j = 0$ for all $j$ with
   $\rho_i - \lambda_i \le j \le \rho_i - 1$.
   Now let $k \in \N$ be such that
   \[
      \mu_i - \lambda_i \le k \le \mu_i - 1.
   \]
   Then $f_k = \sum_{l = 0}^{k} g_l h_{k-l}$. We will show
   $f_k = 0$ by establishing that all
   $k+1$ summands are $0$. To this end, we let $l \in \{0,\ldots,k\}$.
   If $l < \nu_i - \lambda_i$, then
   $k-l > k - \nu_i + \lambda_i \ge \mu_i - \lambda_i - \nu_i + \lambda_i
   = \rho_i$, and thus $h_{k-l} = 0$.
   If $\nu_i - \lambda_i \le l \le \nu_i - 1$,
   we have $g_l = 0$. If $\nu_i \le l \le k - \rho_i + \lambda_i$,
   we have $k-l \ge \rho_i - \lambda_i$ and
   $k-l \le k - \nu_i \le \mu_i - 1 - \nu_i = \rho_i - 1$
   and thus $h_{k-l} = 0$.
   If $l \ge k - \rho_i +  \lambda_i + 1$, then
   $l \ge \mu_i - \lambda_i - \rho_i + \lambda_i + 1 =
   \nu_i + 1$, and therefore $g_l = 0$.
   Thus $f_k = 0$.
   Hence $f$ contains no monomial $\mono{\alpha}$ with
   $\mu_i - \lambda_i \le \alpha_i \le \mu_i - 1$.
   This completes the proof of~\eqref{it:pq1}.

   For proving~\eqref{it:pq2}, we assume that $g$ is
   $\lambda$-lacunary and $h$ is not $\lambda$-lacunary.
   Then there are $\mono{\alpha} \in \Mon (h)$ and
   $i \in \ul{n}$ with $$\rho_i - \lambda_i \le \alpha_i \le \rho_i - 1.$$
   Again, we write $f = \sum_{j=0}^{\mu_i} f_j x_i^j$,
   $g = \sum_{j=0}^{\nu_i} g_j x_i^j$ and 
   $h = \sum_{j=0}^{\rho_i} h_j x_i^j$.
   Since $\mono{\alpha} \in \Mon(h)$, we have    
   $h_{\alpha_i} \neq 0$. 
    We have 
   \[
   f_{\nu_i + \alpha_i} = \sum_{l = 0}^{\nu_i + \alpha_i} g_{\nu_i + \alpha_i - l} h_l.
   \]
   For $l < \alpha_i$, we have $g_{\nu_i + \alpha_i - l} = 0$. For $l = \alpha_i$, we
   obtain the summand $g_{\nu_i} h_{\alpha_i}$. For $l$ with $\alpha_i < l \le \alpha_i + \lambda_i$,
   we obtain $\nu_i - \lambda_i \le \nu_i + \alpha_i - l < \nu_i$, and therefore
   $g_{\nu_i + \alpha_i - l} h_l = 0$. For $l > \alpha_i + \lambda_i$, we have   
   $l > \rho_i$, and therefore $h_l = 0$.
   Hence
   $f_{\nu_i + \alpha_i} = g_{\nu_i} h_{\alpha_i} \neq 0$. Since
   $\mu_i - \lambda_i  = \nu_i + \rho_i - \lambda_i
   \le \nu_i + \alpha_i \le \nu_i + \rho_i - 1 = \mu_i - 1$,
   $f$ can then not be $\lambda$-lacunary.
\end{proof}

\section{Multivariate Polynomial Division} \label{sec:division}
In this section, we analyze the stability of certain monomials
during multivariate polynomial division. Over the integers,
a division of $f$ by $g$ with $g \neq 0$ produces a quotient
$h$ and a remainder $r$ with $f = hg+r$ and $|r| < |g|$. If
$f, g_1,\ldots, g_s$ are multivariate polynomials in $\kxn$,
then division produces an expression
$f = \sum_{i=1}^s h_i g_i +r$ with certain properties of both
the ``quotients'' $h_1, \ldots, h_s$ and the remainder $r$.
Following \cite{Ei:CA}, the equation $f = \sum_{i=1}^s h_i g_i +r$
is then called a \emph{standard expression}.
We will need to write the $h_i$'s as sums of monomials,
and we observe
that the multivariate polynomial division algorithm
explained, e.g., in \cite[Proposition~5.22]{BW:GB}, \cite[Chapter~2, \S 3]{CLO:IVAA4}, 
\cite[Algorithm~1.5.1]{AL:AITG} or \cite[Algorithm~2.3.4]{Sm:ITAG} can easily be modified to produce
what we will call a \emph{natural standard expression.} Every natural
standard expression with remainder $0$ is a \emph{standard representation} in
the sense of \cite[Definition~5.59]{BW:GB}.
\begin{de} \label{de:nse}
Let $G \subseteq \kxn \setminus \{0\}$.
A \emph{natural standard expression} of $f$ by $G$ 
with remainder $r$ with respect to the admissible ordering
$\lea$ is an equality
\begin{equation} \label{eq:se}
f = \sum_{j=1}^t c_j \,\mono{\delta_j} g_j + r,
\end{equation}
where $t \in \N_0$, $c_1, \ldots, c_t \in \ob{K} \setminus \{0\}$,
$\delta_{1}, \ldots,  \delta_{t} \in \NON$,
$g_1, \ldots, g_t \in G$, $r \in \kxn$,
and for each $j \in \ul{t}$, we have
\begin{equation} \label{eq:nat}
  \LM (c_j \,\mono{\delta_{j}} g_j) \in
  \Mon (f - \sum_{i=1}^{j-1} c_i \mono{\delta_i} g_i)
\end{equation}  
and
\begin{equation} \label{eq:nat2}
  \LM (c_j \,\mono{\delta_j} g_j) \not\in
  \Mon (f - \sum_{i=1}^{j} c_i \mono{\delta_i} g_i),
\end{equation}  
and $r$ does not contain a monomial that is divisible by
any monomial in $\{ \LM (g) \mid g \in G\}$.
\end{de}
This definition expresses that during the $j$-th step in the
division of $f$ by $G$, the term $c_j \mono{\delta_j} \LM(g_j)$
appears in the intermediate polynomial that we seek to reduce,
and this term is eliminated by subtracting
$c_j \mono{\delta_j} g_j$.
There are two differences to standard expressions as
used in the literature: first
(cf. \cite[p.334]{Ei:CA}), standard expressions
are often written in a collected form $\sum_{i=1}^s h_i g_i + r$ with
$h_i \in \kxn$.
The second main difference is that a standard representation
as defined in \cite[Definition~5.59]{BW:GB} 
need not come from an actual execution of the division algorithm;
for example setting $f = 2 x^2 y  + x$, $g_1 = xy$, $g_2 = x^2$,
we obtain $f = x \cdot g_1 + y \cdot g_2 + x$, which is a
standard representation, but an execution of the division algorithm
would always reduce the monomial $2x^2 y$ in one step,
yielding, e.g.,  the natural standard expression $f = 2 x g_1 + x$.
The definitions in \cite{Ei:CA, BW:GB} do not grasp this aspect of division, and
hence for our purposes, we prefer the refinement to
\emph{natural standard expressions} given in
Definition~\ref{de:nse}.

An important observation is that
during this division process, certain monomials of $f$
can never be reduced and will therefore end up in the
remainder $r$. We will always assume that the divisors
are $\lambda$-lacunary polynomials. We note that
for a $\lambda$-lacunary polynomial $g$, the leading monomial
is the same for all admissible monomial orderings $\lea$.
Hence the following definitions do not depend on
the choice of the admissible monomial ordering $\lea$ used to determine
$\LM(g)$. The first definition tries to identify monomials $\mono{\gamma}$
in a polynomial $f$ that, in the course of a multivariate polynomial
division of $f$ by $G$, have the potential
to produce a term $c \mono{\alpha}$ that might cancel $\mono{\alpha}$.
We will call these threats to $\mono{\alpha}$'s ability to remain
intact during the division process \emph{$(G, \lambda, \alpha)$-shading monomials.}
  \begin{de} \label{de:shading2}
    Let $\alpha, \gamma, \lambda \in \NON$ and let
    $G$ be a $\lambda$-lacunary subset of $\kxn$.
   The monomial $\mono{\gamma}$ is
  \emph{$(G, \lambda, \alpha)$-shading} if
  \begin{enumerate}
  \item \label{it:sha1}
        $\alpha \sqsubseteq \gamma$ and $\alpha \neq \gamma$,
  \item \label{it:sha2}
    for all $i \in \ul{n}$ with
    $\alpha_i < \gamma_i$,
    there is $g \in G$ with $\LM (g) \mid \mono{\gamma}$
    and $\deg_{x_i} (g) > 0$, and
  \item \label{it:sha3}
    for all $i \in \ul{n}$ with
    $\alpha_i < \gamma_i$, we have
    $\alpha_i + \lambda_i < \gamma_i$.
  \end{enumerate}
  \end{de}
  The next definition tries to single out monomials that will not
  be affected by division by $G$. Theorem~\ref{thm:gstable2}
  then shows that these monomials indeed remain intact.
  \begin{de} \label{de:stable2}
    Let $G \subseteq \kxn$, let $\alpha, \lambda \in \NON$,
    and let $f \in \kxn$.
  We say that
  $\mono{\alpha}$ is a \emph{$(G, \lambda)$-stable} monomial
  in $f$ if the following conditions hold:
  \begin{enumerate}
  \item \label{it:st21} $\alpha \in \Supp (f)$,
  \item \label{it:st22}
    there is no
    $g \in G$ with $\LM (g) \mid  \mono{\alpha}$, and
   \item \label{it:st23}  
     $f$ contains no $(G, \lambda, \alpha)$-shading monomial.
   \end{enumerate}  
\end{de}  

  \begin{thm} \label{thm:gstable2}
    Let $\lambda \in \NON$, and let
    $G$ be a $\lambda$-lacunary subset of $\kxn$.
        Let $f \in \kxn$, and let $g \in G$, $\delta \in \NON$ be such that
    $\LM (\mono{\delta} g) \in \Mon (f)$. Let
  $\mono{\alpha}$ be a $(G, \lambda)$-stable monomial in $f$,
  let $c \in \ob{K} \setminus \{0\}$, and let 
  \[
  h = f - c \,\mono{\delta} g.
  \]
  Then $\mono{\alpha}$ is a 
  $(G,\lambda)$-stable monomial in $h$.
\end{thm}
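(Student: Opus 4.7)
The plan is to verify the three conditions of Definition~\ref{de:stable2} for $\mono{\alpha}$ in $h = f - c\mono{\delta} g$. Write $\rho := \LEXP(g)$ and $\mu := \delta + \rho$, so $\mono{\mu} = \LM(c\mono{\delta} g)$ and $\mono{\mu} \in \Mon(f)$ by hypothesis. Condition~\ref{it:st22} involves only $\mono{\alpha}$ and $G$, so it is inherited trivially. The key observation driving the other two verifications is: whenever something goes wrong for $\mono{\alpha}$ in $h$, the monomial $\mono{\mu}$ turns out to be a $(G,\lambda,\alpha)$-shading monomial already present in $f$, contradicting stability of $\mono{\alpha}$ in $f$.

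For condition~\ref{it:st21}, I suppose $\mono{\alpha}$ cancels, so since $\alpha \in \Supp(f)$ we must have $\alpha = \delta + \nu$ for some $\nu \in \Supp(g)$. If $\nu = \rho$, then $\LM(g) \mid \mono{\alpha}$, contradicting~\ref{it:st22}. Otherwise $\nu \sqsubset \rho$ by $\lambda$-lacunarity of $g$, and I check that $\mono{\mu}$ shades $\mono{\alpha}$: we have $\alpha \sqsubset \mu$, and for each $i$ with $\alpha_i < \mu_i$ we have $\nu_i < \rho_i$, so lacunarity of $g$ forces $\nu_i < \rho_i - \lambda_i$, hence $\alpha_i + \lambda_i < \mu_i$, while $g$ itself furnishes the divisor witness because $\rho_i > 0$ implies $\deg_{x_i}(g) \ge \rho_i > 0$.

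For condition~\ref{it:st23}, I assume some $(G,\lambda,\alpha)$-shading monomial $\mono{\gamma}$ appears in $h$. Any such $\mono{\gamma}$ already in $\Mon(f)$ would contradict stability of $\mono{\alpha}$ in $f$, so I may assume $\mono{\gamma} \in \Mon(h) \setminus \Mon(f)$; then $\gamma = \delta + \nu$ for some $\nu \in \Supp(g)$, so $\gamma \sqsubseteq \mu$. I again promote $\mono{\gamma}$ to the shading monomial $\mono{\mu} \in \Mon(f)$. Condition~\ref{it:sha1} is clear from $\alpha \sqsubseteq \gamma \sqsubseteq \mu$, together with $\mu \neq \alpha$ (otherwise $\LM(g) \mid \mono{\alpha}$). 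For each $i$ with $\alpha_i < \mu_i$ I split on whether $\alpha_i < \gamma_i$ (inherit the gap $\alpha_i + \lambda_i < \gamma_i \le \mu_i$ and the divisor witness from shading of $\mono{\gamma}$, using that any divisor of $\mono{\gamma}$ divides $\mono{\mu}$) or $\alpha_i = \gamma_i < \mu_i$ (apply lacunarity of $g$ to get $\nu_i < \rho_i - \lambda_i$, giving both the gap $\alpha_i + \lambda_i < \mu_i$ and $\deg_{x_i}(g) \ge \rho_i > 0$ for the witness $g' = g$).

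The main obstacle is the bookkeeping around condition~\ref{it:sha2} of shading: supplying, for each offending index $i$, a divisor in $G$ with strictly positive $x_i$-degree requires the correct case distinction between using $g$ itself (when the gap between $\mu_i$ and $\alpha_i$ is produced by $\rho_i$) and reusing a witness previously supplied by shading of $\mono{\gamma}$ (when the gap is already present in $\gamma_i$). Once this split is in place, the componentwise application of $\lambda$-lacunarity of $g$ closes all remaining cases.
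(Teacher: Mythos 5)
Your proof is correct and takes essentially the same approach as the paper's: in both conditions~\eqref{it:st21} and~\eqref{it:st23}, the potential obstruction is promoted to the leading monomial of $\mono{\delta}g$, which is shown to be a $(G,\lambda,\alpha)$-shading monomial in $f$, contradicting stability. The case split you describe (inheriting the witness from $\mono{\gamma}$ when $\alpha_i < \gamma_i$ versus using $g$ itself when $\alpha_i = \gamma_i < \mu_i$) is exactly the one the paper uses, just under different variable names.
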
  
  \begin{proof}
  Let $\mu := \LEXP (g)$.  
  We first show Condition~\eqref{it:st21}
  of~Definition~\ref{de:stable2}. This condition is
\begin{equation} \label{eq:ah2}
  \alpha \in \Supp (h).
\end{equation}  
Seeking a contradiction, we suppose $\alpha \not\in \Supp (h)$.
Then $\alpha \in \Supp (\mono{\delta} g)$.
Thus there is $\mono{\epsi} \in \Mon (g)$ with
$\mono{\alpha} = \mono{\delta} \mono{\epsi}$.
If $\mono{\epsi} = \LM (g)$, then $\LM(g) \mid \mono{\alpha}$,
which violates Condition~\eqref{it:st22} of Definition~\ref{de:stable2},
and hence $\mono{\alpha}$ is not $(G,\lambda)$-stable in $f$,
contradicting the assumptions.
In the case that $\mono{\epsi} \neq \LM (g)$, we show that
\begin{equation} \label{eq:deltamushades}
  \mono{\delta} \mono{\mu} \text{ is a $(G, \lambda, \alpha)$-shading
    monomial in $f$}.
\end{equation}
To this end, we first show that $\mono{\delta}\mono{\mu}$
satisfies Condition~\eqref{it:sha1} of Definition~\ref{de:shading2}.
Since $g$ is lacunary, we have $\epsi \sqsubseteq \mu$,
and therefore $\alpha = \delta + \epsi \sqsubseteq \delta + \mu$.
Since $\epsi \neq \mu$, we also have $\alpha \neq \delta + \mu$.
This completes the proof of Condition~\eqref{it:sha1} of
Definition~\ref{de:shading2}.
Next, we show Conditions~\eqref{it:sha2}~and~\eqref{it:sha3}
of Definition~\ref{de:shading2}.
To this end, we fix $i \in \ul{n}$ and assume $\alpha_i < \delta_i + \mu_i$.
Then $\delta_i + \epsi_i < \delta_i + \mu_i$, and therefore
$\epsi_i < \mu_i$. Hence $\mu_i \neq 0$, and thus $g$ witnesses
that Condition~\eqref{it:sha2} is satisfied. Since $G$ is
$\lambda$-lacunary, we obtain $\epsi_i < \mu_i - \lambda_i$,
which implies $\alpha_i + \lambda_i = \delta_i + \epsi_i + \lambda_i <
\delta_i + \mu_i$, completing the proof of Condition~\eqref{it:sha3}
and of~\eqref{eq:deltamushades}.
Since $\mono{\delta} \mono{\mu} \in \Mon (f)$, this monomial
violates Condition~\eqref{it:st23} of Definition~\ref{de:stable2}
and therefore witnesses that $\mono{\alpha}$ is not $(G,\lambda)$-stable
in~$f$,
contradicting the assumptions and completing the proof of~\eqref{eq:ah2}.

Continuing to show that $\mono{\alpha}$ is $(G, \lambda)$-stable
in $h$, we observe that Condition~\eqref{it:st22} of
Definition~\ref{de:stable2} is inherited from the assumption that
$\mono{\alpha}$ is $(G, \lambda)$-stable in $f$.
Hence we turn to Condition~\eqref{it:st23}. Seeking a contradiction,
we assume that $h$ contains a $(G,\lambda,\alpha)$-shading
monomial $\mono{\gamma}$. Since $f$ contains no
$(G, \lambda, \alpha)$-shading monomial, we know that
$\mono{\gamma} \in \Mon (\mono{\delta} g)$ and
$\mono{\gamma} \neq \mono{\delta}\mono{\mu}$.
Thus there is $\rho \in \Supp (g) \setminus \{\mu\}$ such
that
\[
  \mono{\gamma} = \mono{\delta} \mono{\rho}.
\]
Let $\tilde{\gamma} := \delta + \mu$.  
We show that then
\begin{equation} \label{eq:deltamushades2}
  \mono{\tilde{\gamma}} \text{ is a $(G, \lambda, \alpha)$-shading
    monomial in $f$}.
\end{equation}
To this end, we first show that $\mono{\tilde{\gamma}}$
satisfies Condition~\eqref{it:sha1} of Definition~\ref{de:shading2}.
Since $g$ is lacunary, we have $\rho \sqsubseteq \mu$,
and therefore
\begin{equation} \label{eq:gammadeltamu}
  \gamma = \delta + \rho \sqsubseteq \delta + \mu = \tilde{\gamma}.
\end{equation}  
Since $\mono{\gamma}$ is $(G,\lambda,\alpha)$-shading, we have
$\alpha \sqsubset \gamma$, and therefore
$\alpha \sqsubset \tilde{\gamma}$.
This completes the proof of Condition~\eqref{it:sha1} of
Definition~\ref{de:shading2}.
Next, we show Conditions~\eqref{it:sha2}~and~\eqref{it:sha3}
of Definition~\ref{de:shading2}.
To this end, we fix $i \in \ul{n}$ and assume that $\alpha_i < \tilde{\gamma}_i$.

We first consider the case that $\alpha_i < \gamma_i$. Since
$\mono{\gamma}$ is $(G,\lambda, \alpha)$-shading in $h$, we obtain
a $g' \in G$ with $\deg_{x_i} (g') > 0$ and $\LM(g') \mid \mono{\gamma}$,
and that $\alpha_i + \lambda_i < \gamma_i$. Since
$\gamma \sqsubseteq  \tilde{\gamma}$, we then have
$\LM (g') \mid \mono{\tilde{\gamma}}$ and
$\alpha_i + \lambda_i < \tilde{\gamma}_i$. Thus in this case,
\eqref{eq:deltamushades2} holds.

Now consider the case $\alpha_i = \gamma_i$. Then
$\gamma_i < \tilde{\gamma}_i$, and thus $\rho_i < \mu_i$.
We claim that then $g$ witnesses Condition~\eqref{it:sha2} of
Definition~\ref{de:shading2} needed to verify for proving
that $\mono{\tilde{\gamma}}$ is $(G, \lambda, \alpha)$-shading.
By the definition of $\tilde{\gamma}$, we obtain
$\LM (g) = \mono{\mu} \mid \mono{\delta} \mono{\mu} = \mono{\tilde{\gamma}}$.
Since $\mu_i > \rho_i$, we have $\deg_{x_i} (g) > 0$.
We still have to show Condition~\eqref{it:sha3}, which
claims that
\begin{equation} \label{eq:alg}
  \alpha_i + \lambda_i < \tilde{\gamma}_i.
\end{equation}  
Since $\rho_i < \mu_i$, the fact that $g$ is $\lambda$-lacunary yields
$\rho_i < \mu_i - \lambda_i$, and therefore
\(\alpha_i = \gamma_i = \delta_i + \rho_i < \delta_i + \mu_i - \lambda_i = \tilde{\gamma}_i - \lambda_i, \) establishing~\eqref{eq:alg} and completing
the proof of \eqref{eq:deltamushades2}.
Since $\mono{\delta} \mono{\mu} \in \Mon (f)$, this monomial
violates Condition~\eqref{it:st23} of Definition~\ref{de:stable2}
and therefore witnesses that $\mono{\alpha}$ is not $(G,\lambda)$-stable in $f$,
contradicting the assumptions and completing the proof that
$\mono{\alpha}$ is $(G, \lambda)$-stable
in $h$.
\end{proof}

\begin{cor} \label{cor:rem}
   Let $\lambda \in \NON$, let
   $G$ be a $\lambda$-lacunary subset of $\kxn$,
   and let
   $\lea$ be an admissible ordering of $\NON$.
   Let
   \begin{equation} \label{eq:se2}
      f = \sum_{j=1}^t c_j \, \mono{\delta_j} g_j + r
  \end{equation}
   be a natural standard expression of $f$ by $G$.
   Then all $(G,\lambda)$-stable elements of $\Mon(f)$
   are also elements of $\Mon(r)$.
\end{cor}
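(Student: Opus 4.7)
The plan is to prove this by induction on the number of reduction steps in the natural standard expression, using Theorem~\ref{thm:gstable2} as the main engine. Set $f_0 := f$ and, for each $j \in \ul{t}$, let
\[
  f_j := f - \sum_{i=1}^{j} c_i \mono{\delta_i} g_i,
\]
so that $f_t = r$. I will prove by induction on $j \in \{0,1,\ldots,t\}$ that every $(G,\lambda)$-stable monomial in $f$ is also a $(G,\lambda)$-stable monomial in $f_j$. Specialising to $j = t$ gives $\alpha \in \Supp(r)$, which is exactly what the corollary claims.

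The base case $j = 0$ is immediate since $f_0 = f$. For the inductive step, fix a $(G,\lambda)$-stable monomial $\mono{\alpha}$ in $f$, and suppose inductively that $\mono{\alpha}$ is $(G,\lambda)$-stable in $f_{j-1}$. Condition~\eqref{eq:nat} of the natural standard expression tells us precisely that
\[
  \LM(c_j \mono{\delta_j} g_j) \in \Mon(f_{j-1}),
\]
and since $c_j \neq 0$ this is the same as $\LM(\mono{\delta_j} g_j) \in \Mon(f_{j-1})$. Therefore Theorem~\ref{thm:gstable2} applies with $f_{j-1}$ in place of $f$, with $g = g_j$, $\delta = \delta_j$, $c = c_j$, and yields that $\mono{\alpha}$ is $(G,\lambda)$-stable in
\[
  f_{j-1} - c_j \mono{\delta_j} g_j = f_j.
\]
This closes the induction.

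Taking $j = t$, the monomial $\mono{\alpha}$ is in particular $(G,\lambda)$-stable in $r$, so by Condition~\eqref{it:st21} of Definition~\ref{de:stable2} we have $\alpha \in \Supp(r)$, that is, $\mono{\alpha} \in \Mon(r)$. There is no real obstacle here: Theorem~\ref{thm:gstable2} has already absorbed all of the combinatorial work about how shading monomials can or cannot appear after a single reduction step, and the only thing to check at the level of the corollary is that each reduction step in a natural standard expression satisfies the hypothesis $\LM(\mono{\delta_j} g_j) \in \Mon(f_{j-1})$ of that theorem -- which is built into the definition of a natural standard expression via~\eqref{eq:nat}.
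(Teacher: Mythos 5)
Your proof is correct and takes essentially the same route as the paper: induction on the index of the reduction step, with Theorem~\ref{thm:gstable2} doing all the work. You are slightly more careful than the paper in explicitly checking that condition~\eqref{eq:nat} supplies the hypothesis $\LM(\mono{\delta_j} g_j) \in \Mon(f_{j-1})$ needed to invoke Theorem~\ref{thm:gstable2}, which the paper's proof leaves implicit.
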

\begin{proof}
  Let $\mono{\alpha}$ be a $(G,\lambda)$-stable monomial
  in $f$. 
  By induction on $s$, we show that $\mono{\alpha}$
  is also $(G,\lambda)$-stable in 
  \[
       f - \sum_{j=1}^s c_j \, \mono{\delta_j} g_j.
  \]
  For $s = 0$, there is nothing to prove.
  Now assume that $s \in \{0, \ldots, t-1\}$.
  As inductive hypothesis, we assume that
  $\mono{\alpha}$ is $(G,\lambda)$-stable in
  $f - \sum_{j=1}^s c_j \, \mono{\delta_j} g_j$.
  Then by Theorem~\ref{thm:gstable2},
  $\mono{\alpha}$ is also $(G,\lambda)$-stable
  in
  $f - \sum_{j=1}^s c_j \, \mono{\delta_j} g_j -
  c_{s+1} \mono{\delta_{s+1}} g_{s+1}$,
  completing the induction step.
\end{proof}

\section{A Nullstellensatz for structured grids using conditions on the monomials}
\label{sec:cnv-proof}
In this Section, we will prove Theorem~\ref{thm:cnv}.
For $i \in \ul{n}$, we let
$f_i (x) := \prod_{a \in S_i} (x-a)$, and
we let $g_i := f_i (x_i)$.
If each $f_i$ is a univariate $\lambda_i$-lacunary polynomial
in $\ob{K}[x]$, 
then for each $i \in \ul{n}$,
$g_i$ is a
$(\lambda_1, \ldots, \lambda_n)$-lacunary polynomial in $\kxn$.
With this observation in mind, we can apply Corollary~\ref{cor:rem} to prove
the main result:
\begin{proof}[Proof of Theorem~\ref{thm:cnv}]
  Let $\lambda := (\lambda_1, \ldots, \lambda_n)$, and 
  for each $i \in \ul{n}$, 
  let $g_i := \prod_{a \in S_i} (x_i - a)$. Let
  $I$ be the ideal of $\kxn$ generated by
  $G = \{g_1, \ldots, g_n \}$. By \cite[Theorem~1.1]{Al:CN},
  a polynomial $f$ vanishes on $S_1 \times \cdots \times S_n$
  if and only if it lies in $I$. Now we seek to apply
  Corollary~\ref{cor:rem}. 
  For each $i \in \ul{n}$, $S_i$ is $\lambda_i$-null
  and therefore the polynomial $g_i$
  is $(\lambda_1, \ldots, \lambda_n)$-lacunary.
  We will now show that $\mono{\alpha}$ is a $(G,\lambda)$-stable monomial
  in $f$ with respect to $\lea$.
  First, we observe that for each $i \in \ul{n}$ we have
  $\alpha_i < |S_i|$ and therefore the monomial
  $\LM(g_i)=x_i^{|S_i|}$ does not divide $\mono{\alpha}$, which establishes
  Condition~\eqref{it:st22} of Definition~\ref{de:stable2}.
  Next, we show that $f$ contains no $(G, \lambda, \alpha)$-shading
  monomial.
  Let $\mono{\gamma} \in \Mon (f)$. If $\gamma = \alpha$, then
  $\mono{\gamma}$ violates Condition~\eqref{it:sha1} of
  Definition~\ref{de:shading2} and is therefore not
  $(G,\lambda,\alpha)$-shading.
  If $\gamma \neq \alpha$, the assumption yields
  an $i \in \ul{n}$ such that
  $$\gamma_i \in [0, \alpha_i - 1] \union [\alpha_i + 1, |S_i| - 1] \union
           [|S_i|, \alpha_i + \lambda_i].$$
  If $\gamma_i \in [0, \alpha_i - 1]$, then Condition~\eqref{it:sha1}
  of Definition~\ref{de:shading2} is violated, and so $\mono{\gamma}$
  is not $(G,\lambda,\alpha)$-shading.
  If $\gamma_i \in [\alpha_i + 1, |S_i| - 1]$ and Condition~\eqref{it:sha2}
  of Definition~\ref{de:shading2} is satisfied, then
  $\LM(g_i) \mid \mono{\gamma}$, and therefore
  $|S_i| \le \gamma_i$, contradicting $\gamma_i \le |S_i| - 1$. We
  conclude that also in the case $\gamma_i \in [\alpha_i + 1, |S_i| - 1]$,
  $\mono{\gamma}$  is not $(G,\lambda,\alpha)$-shading.
  If $\gamma_i \in [|S_i|, \alpha_i + \lambda_i]$, then
  we have $\gamma_i \ge |S_i| > \alpha_i$. If Condition~\eqref{it:sha3}
  of Definition~\ref{de:shading2} is satisfied, we have
  $\gamma_i > \alpha_i + \lambda_i$, contradicting
  $\gamma_i \le \alpha_i + \lambda_i$. Hence also in this
  case $\mono{\gamma}$ is not $(G,\lambda,\alpha)$-shading.
  Thus $f$ contains no $(G, \lambda, \alpha)$-shading
  monomial and therefore, $\mono{\alpha}$ is $(G, \lambda)$-stable.
  
     Let
   \(
      f = \sum_{j=1}^t c_j \, \mono{\delta_j} g_{i_j} + r
  \)
   be a natural standard expression of $f$ by $G$.
   Since $\mono{\alpha}$ is a $(G, \lambda)$-stable monomial in $f$,
   Corollary~\ref{cor:rem} yields that 
    $\mono{\alpha} \in \Mon(r)$. 
   Since the leading monomials of the polynomials in $G$ are coprime,
   \cite[Lemma~5.66]{BW:GB} yields that
   the set $G$ is a Gr\"obner basis of $I$
   (cf. \cite[p.89, Exercise 11]{CLO:IVAA4}).
  Since then all
  elements of $I$ have zero remainder in every standard expression
  by $G$,
  we obtain $f \not\in I$, and therefore, $f$ does not vanish on
  all points in $S_1 \times \cdots \times S_n$.
\end{proof}

\section{A Nullstellensatz for structured grids using conditions on the monomials
         with multiplicity} \label{sec:cnv-mult}
In this section, we will prove Theorem~\ref{thm:cnv-mult} and
Corollary~\ref{cor:nica-mult}.
Let $\ob{K}$ be a field, and let $t \in \N$. We define
$\I_t (X)$ as the set of all $f \in \kxn$
that have a $t$-fold zero at each $\vb{a} \in X$. Hence
$\I_0 (X) = \kxn$ since every place is a $0$-fold zero of every polynomial,
and $\I_1 (X) = \I (X)$. 
For $X$ being a grid, \cite{BS:PCN} provides a basis
of $\I_t (X)$. For arbitrary finite $X$,
generators of $\I_t(X)$ can be determined from generators
of $\I(X)$ using some arguments on
ideals in commutative rings that we collect in the next
two lemmata. For an ideal $I$ of $\kxn$ and $t \in \N$,
$I^t$ denotes the $t$-th power of the ideal $I$, which is
defined to be the ideal generated by all products
$i_1 \cdots i_t$ with (not necessarily distinct) $i_1, \ldots, i_t \in I$. The product
of two ideals $I,J$ is the ideal generated
by $\{ij \mid i \in I, j \in J\}$ and denoted by $IJ$.
\begin{lem} \label{lem:mt}
  Let $R$ be a commutative ring with unit, let $s, t \in \N$, and
  let $M_1, \ldots, M_s$ be distinct maximal ideals of $R$.
  Then $(\bigcap_{i \in \ul{s}} M_i)^t = \bigcap_{i \in \ul{s}} M_i^t$.
\end{lem}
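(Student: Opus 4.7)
The inclusion $(\bigcap_{i} M_i)^t \subseteq \bigcap_{i} M_i^t$ is immediate: since $\bigcap_i M_i \subseteq M_j$ for every $j$, raising to the $t$-th power gives $(\bigcap_i M_i)^t \subseteq M_j^t$ for every $j$. All the work lies in the reverse containment, and my plan is to reduce to the two-ideal comaximal case by induction on $s$.

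For the reduction, the key observation is that distinct maximal ideals are pairwise comaximal: $M_i + M_j = R$ whenever $i \neq j$, because $M_i \subsetneq M_i + M_j$ forces $M_i + M_j = R$ by maximality. From pairwise comaximality I would deduce, by a standard product-expansion trick ($1 = \prod_{i=2}^s (a_i + b_i)$ with $a_i \in M_1$ and $b_i \in M_i$), that $M_1$ is comaximal with $N := \bigcap_{i=2}^{s} M_i$. Assuming inductively that $N^t = \bigcap_{i=2}^s M_i^t$, the problem collapses to proving, for any two comaximal ideals $I, J$ of $R$, the identity $(I \cap J)^t = I^t \cap J^t$.

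To handle this two-ideal case I would first invoke the well-known fact that comaximal ideals satisfy $I \cap J = IJ$. Next, I would upgrade comaximality to the powers: if $a + b = 1$ with $a \in I$ and $b \in J$, then expanding $1 = (a+b)^{2t-1}$ by the binomial theorem produces terms $\binom{2t-1}{k} a^k b^{2t-1-k}$, each of which lies in $I^t$ when $k \geq t$ and in $J^t$ when $2t-1-k \geq t$; since these cases exhaust $k \in \{0, \ldots, 2t-1\}$, one obtains $I^t + J^t = R$ and therefore $I^t \cap J^t = I^t J^t$. Finally, the routine identity $(IJ)^t = I^t J^t$, which follows from rearranging products $(a_1 b_1)\cdots(a_t b_t) = (a_1 \cdots a_t)(b_1 \cdots b_t)$ in both directions, gives $(I \cap J)^t = (IJ)^t = I^t J^t = I^t \cap J^t$ as desired.

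The main obstacle is bookkeeping: the two-ideal step rests on the trio of facts (comaximal $\Rightarrow$ $I \cap J = IJ$; comaximal $\Rightarrow$ powers comaximal; $(IJ)^t = I^t J^t$), and in the inductive reduction one must be careful to verify $M_1 + N = R$ honestly rather than merely asserting it. Beyond that the argument is algebraic bookkeeping, so I would keep each of these three sub-lemmas explicit for clarity.
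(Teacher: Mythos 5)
Your proof is correct and rests on the same inductive skeleton as the paper's: induction on $s$, showing $M_1$ is comaximal with $J := \bigcap_{i \ge 2} M_i$, and then reducing to a two-ideal statement. Where you diverge is in the choice of auxiliary lemma. The paper establishes a single asymmetric fact --- $I \cap M_1^r = I M_1^r$ for any ideal $I \not\subseteq M_1$ and any $r \in \N$ --- by taking $g \in I \cap M_1^r$, writing $1 = a + b$ with $a \in I$, $b \in M_1$, and expanding $(a+b)^r g$; this lemma is applied twice, with $r = 1$ and with $r = t$, and the hypothesis $J^t \not\subseteq M_1$ is verified using the primeness of $M_1$. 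You instead prove the symmetric two-ideal identity $(I \cap J)^t = I^t \cap J^t$ for comaximal $I, J$ by chaining three standard facts: $I \cap J = IJ$, comaximality of $I^t$ and $J^t$ (via expanding $(a+b)^{2t-1} = 1$), and $(IJ)^t = I^t J^t$. Both proofs ultimately hinge on the same binomial-expansion trick; the paper spends it on the element $g$ directly, while you spend it on making the powers comaximal. A small dividend of your route is that it never appeals to primeness --- only pairwise comaximality enters --- so the argument in fact establishes the identity for arbitrary pairwise comaximal ideals, not only maximal ones.
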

\begin{proof}
  We proceed by induction on $s$. For $s=1$, the statement is obvious.
  For the induction step, let $s \ge 2$, and let
  $J := \bigcap_{i = 2}^s M_i$.
  First, we observe that for any
  collection $I_1, \ldots, I_k$ of ideals with
  $I_i \not\subseteq M_1$ for all $i \in \ul{k}$,
  we have
  \begin{equation} \label{eq:pM1}
    I_1 I_2 \cdots I_k \not\subseteq M_1.
  \end{equation}  
  In order to show~\eqref{eq:pM1}, we observe that
  for $i \in \ul{s}$ with $i \ge 2$, the assumption $I_i \not\subseteq M_1$
  implies that there
  is $a_i \in I_i \ohne M_1$. The ideal $M_1$ is maximal, and therefore prime,
  and thus $\prod_{i = 1}^k a_i \not\in M_1$ and $\prod_{i=1}^k a_i \in I_1 \cdots I_k$. 
  This proves~\eqref{eq:pM1}.   
  Setting $k := s-1$ and $I_j := M_{j+1}$, we obtain
  $M_2\cdots M_s \not\subseteq M_1$, and since $M_2 \cdots M_s \subseteq J$ also
  \begin{equation} \label{eq:JM1}
  J \not\subseteq M_1.
  \end{equation}
  Next, we show that for all ideals $I$ of $R$ with $I \not\subseteq M_1$
  and for all $r \in \N$, we have
  \begin{equation} \label{eq:IM1}
     I \cap M_1^r =  I M_1^r.
  \end{equation}
  The $\supseteq$-inclusion is obvious, so we only prove
  $\subseteq$.
  Let $g \in I \cap M_1^r$. Since $I \not\subseteq M_1$, we have
  $I + M_1 = R$ and thus there are $a \in I$ and  $b \in M_1$ such that
  $1 = a + b$. Then
  $g = (a + b)^r g = b^r g + \sum_{i=1}^{r} {r \choose i} a^i b^{r-i} g$.
  Then $b^r g \in M_1^r I = I M_1^r$, and 
  for $i \ge 1$, we have
  $a^i \in I$ and $g \in M_1^r$, and therefore
  $a^i b^{r-i} g \in IM_1^r$. Thus 
  $g \in IM_1^r$,  which establishes~\eqref{eq:IM1}.
  Now $\bigcap_{i \in \ul{s}} M_i^t =
  M_1^t \, \cap \, \bigcap_{i = 2}^s M_i^t$. 
  By the induction hypothesis, the last expression is equal to
  $M_1^t \cap (\bigcap_{i = 2}^s M_i)^t = M_1^t \cap J^t$.
  By~\eqref{eq:JM1} and~\eqref{eq:pM1}, we have $J^t \not\subseteq M_1$,
  and thus by~\eqref{eq:IM1}, we have
  $M_1^t \cap J^t = M_1^t J^t = (M_1 J)^t$.
  By \eqref{eq:JM1} and~\eqref{eq:IM1}, $(M_1 J)^t =
  (M_1 \cap J)^t =
  (\bigcap_{i=1}^s M_i)^t$. 
\end{proof}
\begin{lem} \label{lem:im}
  Let $X$ be a finite subset of $\kn$, and let $t \in \N$. Then
  $\I_t (X) = \I(X)^t$.
\end{lem}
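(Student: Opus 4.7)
The plan is to interpret both $\I_t(X)$ and $\I(X)^t$ as ideal-theoretic expressions indexed by the points of $X$ and then apply Lemma~\ref{lem:mt}. For each $\vb{a} = (a_1, \ldots, a_n) \in \kn$, let $M_{\vb{a}} := \langle x_1 - a_1, \ldots, x_n - a_n \rangle$; this is a maximal ideal of $\kxn$ since evaluation at $\vb{a}$ identifies the quotient $\kxn / M_{\vb{a}}$ with $\kk$. Distinct points in $\kn$ yield distinct maximal ideals, so $\{ M_{\vb{a}} \mid \vb{a} \in X \}$ is a finite family of pairwise distinct maximal ideals of $\kxn$.

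The key observation is the translation identity
\[
  \vb{a} \text{ is a } t\text{-fold zero of } f \iff f \in M_{\vb{a}}^t.
\]
To see this, consider the $\kk$-algebra automorphism $\sigma_{\vb{a}} \colon \kxn \to \kxn$ defined by $x_i \mapsto x_i + a_i$. It sends $M_{\vb{a}}$ onto $\langle x_1, \ldots, x_n \rangle$ and, since an automorphism commutes with the formation of powers of ideals, it sends $M_{\vb{a}}^t$ onto $\langle x_1, \ldots, x_n \rangle^t$. By the definition of $t$-fold zero, $\vb{a}$ is a $t$-fold zero of $f$ precisely when $\sigma_{\vb{a}}(f) = f(x_1 + a_1, \ldots, x_n + a_n)$ lies in $\langle x_1, \ldots, x_n \rangle^t$, which is therefore equivalent to $f \in M_{\vb{a}}^t$.

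With this identity in hand, the equalities $\I (X) = \bigcap_{\vb{a} \in X} M_{\vb{a}}$ and $\I_t (X) = \bigcap_{\vb{a} \in X} M_{\vb{a}}^t$ are immediate from the definitions. Finally, Lemma~\ref{lem:mt} applied to the ring $\kxn$ and the finite family $\{ M_{\vb{a}} \mid \vb{a} \in X \}$ of distinct maximal ideals yields
\[
  \I_t (X) = \bigcap_{\vb{a} \in X} M_{\vb{a}}^t = \Big( \bigcap_{\vb{a} \in X} M_{\vb{a}} \Big)^t = \I (X)^t,
\]
which is the desired conclusion. The only step that is more than bookkeeping is the translation identity, but this is essentially a definition chase once one notes that $\sigma_{\vb{a}}$ is a ring automorphism; the substantive commutative-algebra content sits in Lemma~\ref{lem:mt}, which has already been established.
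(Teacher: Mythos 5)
Your proof is correct and follows essentially the same route as the paper's: identify $\I(X)$ and $\I_t(X)$ as intersections over $\vb{a}\in X$ of $M_{\vb{a}}$ and $M_{\vb{a}}^t$ respectively (via the translation automorphism), then invoke Lemma~\ref{lem:mt}. The only cosmetic difference is that you define $M_{\vb{a}}$ directly as $\langle x_1-a_1,\ldots,x_n-a_n\rangle$ and use the automorphism $x_i\mapsto x_i+a_i$, while the paper writes $M_{\vb{a}}=\I(\{\vb{a}\})$ and uses the inverse $x_i\mapsto x_i-a_i$; these are the same ideal and the same argument.
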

\begin{proof}
  For $\vb{a} \in X$, let $M_{\vb{a}} := \I (\{ \vb{a} \})$.
    Then $\I(X) = \bigcap_{\vb{a} \in X} M_{\vb{a}}$.
    We observe that by definition, $\vb{a}$ is a $t$-fold zero of $f$
    if and only if $f (\vb{x} + \vb{a})$ lies in the ideal
    $\XI^t$. Applying the isomorphism $\sigma : \kxn \to \kxn$,
    with $\sigma (x_i) := x_i - a_i$, we obtain that
    $f (\vb{x} + \vb{a}) \in \XI^t$ if and only if
    $f (\vb{x}) \in \langle x_1 - a_1, \ldots, x_1 - a_n \rangle^t$,
    which is equivalent to $f(\vb{x}) \in M_{\vb{a}}^t$.
    Hence 
    $\I_t (X) = \bigcap_{\vb{a} \in X} M_{\vb{a}}^t$.
    Now Lemma~\ref{lem:mt} yields the required equality
    $\I(X)^t = \I_t (X)$.
\end{proof}
As a consequence, we obtain a set of generators of
$\I_t (S)$ for a grid $S$. For a finite subset
$G$ of $\kxn$, we define $G^t := \{ g_1 \cdots g_t \mid
g_1, \ldots, g_t \in G \}$. The ideal of $\kxn$ that is
generated by $G$ is denoted by $\submod{G}$.
\begin{lem}[{\cite[Theorem~3.1]{BS:PCN}}] \label{lem:gtit}
  Let $S = \bigtimes_{i=1}^n S_i$ be a grid over $\kk$,
  for each $i \in \ul{n}$, let $g_i := \prod_{a \in S_i} (x_i - a)$,
  and let $G := \{g_1, \ldots, g_n\}$
  Then $G^t$ generates the ideal $\I_t (S)$.
\end{lem}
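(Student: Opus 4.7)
The plan is to combine Lemma~\ref{lem:im} with Alon's description of $\I(S)$ and thereby reduce the statement to a purely ring-theoretic fact about powers of finitely generated ideals. First I would invoke \cite[Theorem~1.1]{Al:CN} to write $\I(S) = \submod{G}$, and then apply Lemma~\ref{lem:im} with $X := S$ to identify $\I_t(S)$ with $\I(S)^t$. With these two identifications in hand, the claim becomes: whenever an ideal $I$ of a commutative ring is generated by a finite set $G$, the $t$-th power $I^t$ is generated by the product set $G^t$.

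The inclusion $\submod{G^t} \subseteq I^t$ is immediate, since each element of $G^t$ is by construction a product of $t$ members of $I$. For the reverse inclusion, my plan is a direct expansion argument: every element of $I^t$ is a finite sum of products $h_1 \cdots h_t$ with $h_j \in I$, and writing each $h_j = \sum_{k=1}^n p_{j,k}\, g_k$ with $p_{j,k} \in \kxn$ and distributing the product expresses
\[
h_1 \cdots h_t = \sum_{(k_1,\ldots,k_t) \in \ul{n}^t} p_{1,k_1} \cdots p_{t,k_t}\, g_{k_1} \cdots g_{k_t}
\]
as a $\kxn$-linear combination of elements of $G^t$. Summing such expressions then shows that every element of $I^t$ lies in $\submod{G^t}$.

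I do not anticipate any genuine obstacle here, since the nontrivial content has already been absorbed into Lemma~\ref{lem:im}: that lemma, via the maximal-ideal intersection argument of Lemma~\ref{lem:mt}, is what translates the condition of vanishing to order $t$ at each point of $S$ into the algebraic statement that $f$ lies in $\I(S)^t$. After this translation, the only remaining step is the formal identity $\submod{G}^t = \submod{G^t}$, and the particular grid structure of $S$ plays no further role beyond providing the explicit generators $g_i = \prod_{a \in S_i}(x_i - a)$ of $\I(S)$ via Alon's theorem.
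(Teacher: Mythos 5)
Your proposal is correct and follows exactly the same route as the paper's proof: invoke Alon's Theorem~1.1 to write $\I(S) = \submod{G}$, apply Lemma~\ref{lem:im} to identify $\I_t(S)$ with $\I(S)^t$, and conclude by the formal identity $\submod{G}^t = \submod{G^t}$. Your expansion argument for that last step is the same standard observation the paper compresses into the single word ``Therefore''.
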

\begin{proof}
  By \cite[Theorem~1.1]{Al:CN}, $G$ generates
  the ideal $\I(S)$. Therefore, $G^t$ generates the ideal
  $\I(S)^t$, which is equal to $\I_t (S)$ by Lemma~\ref{lem:im}.
\end{proof}
Lemma~\ref{lem:gtit} yields that every $f \in \I_t (S)$ can be written
as $f = \sum_{i=1}^k h_i g'_i$ with $k \in \N_0$,
$h_1, \ldots, h_k \in \kxn$ and $g'_1, \ldots, g'_k \in G^t$.
As an additional piece of information, 
\cite[Theorem~3.1]{BS:PCN} ensures that we can pick these summands in a way
that $\deg (h_k g'_k) \le \deg (f)$. We give an alternative
argument for
these degree bounds by showing that $G^t$ is a Gr\"obner basis.
For this purpose, we extend Buchberger's First Criterion
\cite[Lemma~5.66]{BW:GB}.
\begin{thm} \label{thm:gt}
  Let $s,t \in \N$, and let $g_1, \ldots, g_s \in \kxn \setminus \{0\}$ be such that
  for $i,j \in \ul{s}$ with $i \neq j$, $\LM(g_i)$ and $\LM(g_j)$
  do not have any variable in common, i.e.,
  $\gcd (\LM(g_i), \LM(g_j)) = 1$, and let $\lea$ be an admissible
  ordering of monomials.
  Then \(G^t := \{ g_1^{\alpha_1} \cdots g_s^{\alpha_s} \mid
                  \alpha_1, \ldots, \alpha_s \in \N_0, \sum_{i=1}^s
                  \alpha_i = t \}\)
  is a Gr\"obner basis of the ideal $\submod{G}^t$ with respect
  to $\lea$.
\end{thm}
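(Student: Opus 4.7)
The plan is to verify Buchberger's $S$-polynomial criterion for the candidate basis $G^t$ by producing, for every pair of distinct $p, q \in G^t$, an \emph{$\lcm$-representation} of $S(p,q)$: a decomposition $S(p,q) = h_1 p + h_2 q$ with both $\LM(h_1 p)$ and $\LM(h_2 q)$ strictly below $\lcm(\LM(p),\LM(q))$. The existence of such a representation for every pair is a classical sufficient condition for $G^t$ to be a Gr\"obner basis of $\submod{G^t}$ (see, e.g., \cite[Theorem~5.64]{BW:GB}).

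First I would record two routine reductions. Rescaling each $g_i$ by $1/\LC(g_i)$ only multiplies each element of $G^t$ by a unit and so preserves the Gr\"obner basis property; I may therefore assume $\LC(g_i) = 1$ throughout. Also, commutativity lets every product of $t$ elements of $G$ be collected into the shape $g_1^{\alpha_1}\cdots g_s^{\alpha_s}$ with $\sum_i \alpha_i = t$, so that $\submod{G^t} = \submod{G}^t$. Write $m_i := \LM(g_i)$; by hypothesis the $m_i$ are pairwise coprime monomials.

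Now fix distinct $p = \prod_i g_i^{\alpha_i}$ and $q = \prod_i g_i^{\beta_i}$ in $G^t$, and set $\gamma_i := \max(\alpha_i,\beta_i)$, $u_i := \gamma_i - \alpha_i$, $v_i := \gamma_i - \beta_i$. Since the $m_i$ share no variables,
\[
  \lcm(\LM(p),\LM(q)) = \prod_i m_i^{\gamma_i}, \qquad
  S(p,q) = \prod_i m_i^{u_i}\cdot p - \prod_i m_i^{v_i}\cdot q.
\]
The key observation is the commutativity identity $p\cdot \prod_i g_i^{u_i} = \prod_i g_i^{\gamma_i} = q\cdot \prod_i g_i^{v_i}$, which allows me to rewrite
\[
  S(p,q) = \Bigl(\prod_i m_i^{u_i} - \prod_i g_i^{u_i}\Bigr)\,p - \Bigl(\prod_i m_i^{v_i} - \prod_i g_i^{v_i}\Bigr)\,q,
\]
the two copies of $\prod_i g_i^{\gamma_i}$ cancelling. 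Since $\LC(g_i)=1$, both $\prod_i m_i^{u_i}$ and $\prod_i g_i^{u_i}$ have leading term $\prod_i m_i^{u_i}$ with coefficient $1$, so their difference has leading monomial strictly below $\prod_i m_i^{u_i}$; multiplying by $p$ yields a summand of leading monomial strictly below $\prod_i m_i^{\alpha_i + u_i} = \prod_i m_i^{\gamma_i}$, and similarly for the second summand. This is the required $\lcm$-representation.

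The only genuinely delicate point is the normalization $\LC(g_i)=1$: without it, the leading coefficients of $\prod m_i^{u_i}$ and $\prod g_i^{u_i}$ would disagree, their difference would retain $\prod m_i^{u_i}$ as leading monomial, and the two summands above would have leading monomial equal to $\prod m_i^{\gamma_i} = \lcm(\LM(p),\LM(q))$ rather than strictly less, invalidating the certificate. The initial rescaling step removes this complication. Conceptually the argument lifts Buchberger's First Criterion (the $t=1$ case, \cite[Lemma~5.66]{BW:GB}, applied to the pairwise-coprime $m_i$) to $t$-fold products via the displayed commutativity identity, and I do not anticipate any further obstruction.
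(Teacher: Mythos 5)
Your proof is correct and rests on the same central algebraic identity as the paper's: both decompose
\[
  S(p,q)=\Bigl(\prod_i m_i^{u_i}-\prod_i g_i^{u_i}\Bigr)p-\Bigl(\prod_i m_i^{v_i}-\prod_i g_i^{v_i}\Bigr)q
\]
using the commutativity relation $p\cdot\prod_i g_i^{u_i}=q\cdot\prod_i g_i^{v_i}$ and the normalization $\LC(g_i)=1$, and both recognize that coprimality of the $\LM(g_i)$ is what makes the $\lcm$ factor nicely. The difference is only in how you close the argument: you observe that each summand already has leading monomial strictly below $\lcm(\LM(p),\LM(q))$, i.e.\ an ``lcm-representation,'' and invoke that as a sufficient criterion, whereas the paper goes a step further and shows that the two nonzero summands have \emph{different} multidegrees, from which it follows that the right-hand side is a genuine standard expression (so the leading monomial of each summand is bounded by $\LM(S(p,q))$ itself, not merely by the lcm). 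Your finish is slightly cleaner, but note the citation mismatch: \cite[Theorem~5.64]{BW:GB} is Buchberger's characterization in terms of standard representations, which is exactly what the paper verifies directly; the lcm-representation criterion you use is a separate (true and classical) refinement, found e.g.\ in Cox--Little--O'Shea. Since both summands strictly below the lcm does not by itself give the standard-representation bound (the two could in principle share a leading monomial and cancel), if you want to cite \cite[Theorem~5.64]{BW:GB} you should add, as the paper does, the short check that the two summands cannot have equal multidegree; otherwise cite the lcm criterion explicitly.
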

\begin{proof}
  Clearly, the set $G^t$ generates the ideal $\submod{G}^t$.
  We now show that $G^t$ is a Gr\"obner basis. To this end, we
  use Buchberger's Characterization Theorem for Gr\"obner bases
  \cite[Theorem~5.64]{BW:GB}
  (cf. \cite[Theorem~6.2]{Bu:GBAA}, \cite[Theorem~15.8]{Ei:CA}) 
  which states that
  $G^t$ is a Gr\"obner basis if for all $f,h \in G^t$,
  the $S$-polynomial $S(f,h)$ has a standard
   expression by $G^t$ with remainder~$0$. 
   Without loss of generality, we assume that $g_1, \ldots, g_s$ are
   normed, i.e., $\LC (g_i) = 1$ for all $i \in \ul{s}$, and 
   we show that every $S$-polynomial of $G^t$ has a standard
   expression by $G^t$
   with remainder~$0$.
   To this end, let
   $f = g_1^{\alpha_1} \cdots g_s^{\alpha_s}$ and $h = g_1^{\beta_1} \cdots
        g_s^{\beta_s}$ be elements of  $G^t$.
   The $S$-polynomial $S(f,h)$ can be computed as
   \[
       S(f,h) = \frac{\lcm (\LM (f), \LM(h))}{\LM(f)} \, f
   -
    \frac{\lcm (\LM (f), \LM(h))}{\LM(h)}\, h.
   \]
   We have
   \[
      \begin{split}
   \lcm (\LM(f), \LM(h)) &=
   \lcm \big(\LM (\prod_{i=1}^s g_i^{\alpha_i}),
         \LM (\prod_{i=1}^s g_i^{\beta_i})\big) \\ &=
    \lcm \big(\prod_{i=1}^s \LM (g_i)^{\alpha_i},
          \prod_{i=1}^s \LM (g_i)^{\beta_i}\big) \\  &=
          \prod_{i=1}^s \LM(g_i)^{\max (\alpha_i, \beta_i)},
      \end{split}
   \]   
          where
          the last equality holds because the $\LM(g_i)$
    are coprime.
    Now let $I = \{i \in \ul{s} \mid \alpha_i > \beta_i\}$
    and $J = \{j \in \ul{s} \mid \alpha_j < \beta_j\}$.
    Then
    \[
    S(f,h) = \big(\prod_{j \in J} \LM(g_j)^{\beta_j - \alpha_j}\big) \,
    g_1^{\alpha_1} \cdots g_s^{\alpha_s} -
     \big(\prod_{i \in I} \LM(g_i)^{\alpha_i - \beta_i}\big) \,
     g_1^{\beta_1} \cdots g_s^{\beta_s}.
   \]
   Since $(\prod_{j \in J} g_j^{\beta_j - \alpha_j}) \, g_1^{\alpha_1} \cdots g_s^{\alpha_s}
   = (\prod_{i \in I} g_i^{\alpha_i - \beta_i}) \,  g_1^{\beta_1} \cdots g_s^{\beta_s}$,
   we have
   \begin{multline} \label{eq:candrep}
   S(f,h) =
   \bigl(
   \prod_{i \in I} g_i^{\alpha_i - \beta_i}
     -
   \prod_{i \in I} \LM(g_i)^{\alpha_i - \beta_i}
     \bigr)
    g_1^{\beta_1} \cdots g_s^{\beta_s}   
    \\ -
   \bigl(
   \prod_{j \in J} g_j^{\beta_j - \alpha_j}
     -
   \prod_{j \in J} \LM(g_j)^{\beta_j - \alpha_j}
   \bigr)
   g_1^{\alpha_1} \cdots g_s^{\alpha_s}.
   \end{multline}
   We show that this is a standard expression of $S(f,h)$ by $G^t$.
   If at least one of the two summands in the right hand side of~\eqref{eq:candrep} is $0$, then we are done.
   Otherwise we establish
   that the two summands have different multidegree.
   Seeking a contradiction, we suppose that they have the same multidegree.
   Then since $\prod_{i \in I} \LM(g_i)^{\alpha_i}$ divides the leading monomial
   of the second summand in the right hand side of~\eqref{eq:candrep},
   it also divides the leading monomial of the first summand.
   Using that for $i \in I$ and $j \not\in I$,
   $\LM (g_i)^{\alpha_i}$ is coprime
   to $\LM(g_j)^{\beta_j}$, we obtain that
   \begin{equation} \label{eq:div}
      \prod_{i \in I} \LM (g_i)^{\alpha_i} \text{ divides }
     \LM  \Big(\bigl(
   \prod_{i \in I} g_i^{\alpha_i - \beta_i}
     -
   \prod_{i \in I} \LM(g_i)^{\alpha_i - \beta_i}
     \bigr)
     \prod_{i \in I} \LM (g_i)^{\beta_i} \Big).
   \end{equation}  
   We have 
     $\LM\big(  \prod_{i \in I} g_i^{\alpha_i - \beta_i}
              -
              \prod_{i \in I} \LM(g_i)^{\alpha_i - \beta_i} \big)
              <_a \prod_{i \in I} \LM(g_i)^{\alpha_i - \beta_i}$,
   and therefore the degree of the right hand side
   is less (w.r.t $<_a$) than
   the degree of the left hand side of~\eqref{eq:div}.
   Since the right hand side is not $0$, this is a contradiction.
\end{proof}
Lemma~\ref{lem:gtit} and Theorem~\ref{thm:gt} allow us to determine a Gr\"obner basis
for the ideal $\I_t (S)$ associated with a grid $S$.
After these preparations, we can now prove the main results
established in this section.
    \begin{proof}[Proof of Theorem~\ref{thm:cnv-mult}]
  For each $i \in \ul{n}$, 
  let $g_i := \prod_{a \in S_i} (x_i - a)$. Let
  $G = \{g_1, \ldots, g_n \}$, and 
  $I$ be the ideal of $\kxn$ given
  by $I := \submod{G}^t$. 
    By Lemma~\ref{lem:gtit}
  all elements of $S$ are $t$-fold zeros $f$
  if and only if $f \in I$. Now we seek to apply
  Corollary~\ref{cor:rem}.
  For each $i \in \ul{n}$,  $S_i$ is $\lambda_i$-null
  and thus the polynomial $g_i$ 
  is $\lambda$-lacunary. Therefore, by Lemma~\ref{lem:proquolac}
  each polynomial in $G^t$ is $\lambda$-lacunary.
  We will now show that $\mono{\alpha}$ is a $(G^t, \lambda)$-stable monomial
  in $f$.
  First, suppose that there is $h \in G^t$ such that
  $\LM (h)$ divides $\mono{\alpha}$. Let $\beta_1, \ldots, \beta_n \in \N_0$
  such that $\sum_{i=1}^n \beta_i = t$ and $h = g_1^{\beta_1} \cdots
  g_n^{\beta_n}$. Then $\LM (h) = \prod_{i=1}^n \LM(g_i)^{\beta_i} =
  \prod_{i=1}^n x_i^{\beta_i |S_i|}$ and thus, we have
  $\beta_i |S_i| \le \alpha_i$ for all $i \in \N$, contradicting
  the assumption stated in~\eqref{eq:betat}.  
   Therefore the monomial
  $\LM(h)$ does not divide $\mono{\alpha}$, which establishes
  Condition~\eqref{it:st22} of Definition~\ref{de:stable2}.
  Next, we show that $f$ contains no $(G^t, \lambda, \alpha)$-shading
  monomial.
  Let $\mono{\gamma} \in \Mon (f)$. If $\alpha = \gamma$, then
  $\mono{\gamma}$ violates Condition~\eqref{it:sha1} of
  Definition~\ref{de:shading2} and is therefore not
  $(G^t,\lambda,\alpha)$-shading.
  If $\gamma \neq \alpha$, the assumption yields
  an $i \in \ul{n}$ such that $\gamma_i \neq \alpha_i$ and~\eqref{eq:aich3mult} holds.
  If $\gamma_i \in [0, \alpha_i - 1]$, then Condition~\eqref{it:sha1}
  of Definition~\ref{de:shading2} is violated, and so $\mono{\gamma}$
  is not $(G^t,\lambda,\alpha)$-shading.
  If $\gamma_i \in [\alpha_i + 1, \alpha_i + \lambda_i]$, then
  Condition~\eqref{it:sha3} of 
  of Definition~\ref{de:shading2} is violated and
  thus
  case $\mono{\gamma}$ is not $(G^t,\lambda,\alpha)$-shading.
  Now we turn to the case that the last alternative in~\eqref{eq:aich3mult}
  holds and that $\gamma_i > \alpha_i + \lambda_i$.
  Seeking a contradiction, we suppose that
  Condition~\eqref{it:sha2} of Definition~\ref{de:shading2} is satisfied.
  This condition tells that there is $\beta \in \NON$ such that
  $\sum_{j=1}^n \beta_j = t$ and 
  $\LM (g_1^{\beta_1} \ldots g_n^{\beta_n}) \mid \mono{\gamma}$ and
  $\beta_i |S_i| > 0$.
   Since $\LM (g_1^{\beta_1} \ldots g_n^{\beta_n}) \mid \mono{\gamma}$,
   we have $\beta_j |S_j| \le \gamma_j$ for all $j \in \ul{n}$. This contradicts  the case assumption
  that the last alternative in~\eqref{eq:aich3mult} holds.
  Hence Condition~\eqref{it:sha2} of Definition~\ref{de:shading2} fails,
  and thus 
  $\mono{\gamma}$  is not $(G^t,\lambda,\alpha)$-shading.

  Therefore, $f$ contains no $(G^t, \lambda, \alpha)$-shading
  monomial and thus, $\mono{\alpha}$ is $(G^t, \lambda)$-stable.
    Let
   \(
      f = \sum_{j=1}^t c_j \, \mono{\delta_j} g_{i_j} + r
  \)
   be a natural standard expression of $f$ by $G$.
   Since $\mono{\alpha}$ is a $(G^t, M)$-stable monomial in $f$,
   Corollary~\ref{cor:rem} yields that 
    $\mono{\alpha} \in \Mon(r)$. 
   Since the leading monomials of the polynomials in $G$ are coprime,
   Theorem~\ref{thm:gt} yields that 
   the set $G^t$ is a Gr\"obner basis of $I$.
     Since then all
  elements of $I$ have zero remainder in every standard expression
  by $G^t$,
  we obtain $f \not\in I$, and therefore, not
  all points in $S_1 \times \cdots \times S_n$ are $t$-fold
  zeros of $f$.
\end{proof}

    \begin{proof}[Proof of Corollary~\ref{cor:nica-mult}]
      We assume that $\sum_{i=1}^n \alpha_i \ge \deg (f) - \lambda$,
      and 
      we show that for $\lambda_1 = \cdots = \lambda_n = \lambda$,
      the assumptions of
  Theorem~\ref{thm:cnv-mult} are satisfied.
  Suppose that the assumption on the monomials
  $\mono{\gamma} \in \Mon (f) \setminus \{\mono{\alpha}\}$ fails.
  Then there is a monomial $\mono{\gamma} \in \Mon (f)$ with
  $\gamma \neq \alpha$ such that for all $i \in \ul{n}$, we have
  $\gamma_i = \alpha_i$ or $\gamma_i > \alpha_i + \lambda$.
  Then  $\deg (f) \ge \deg (\mono{\gamma}) > (\sum_{i=1}^n \alpha_i) + \lambda$, 
  contradicting the assumption~\eqref{eq:nica2} of
  Corollary~\ref{cor:nica-mult}.
  Now Theorem~\ref{thm:cnv-mult} yields the result.
 \end{proof}

    \section{A Nullstellensatz for grids of multisets} \label{sec:multisets}
    In this section, we add two improvements to K\'os and
    R\'onyai's Nullstellensatz for multisets given in
    \cite[Theorem~6]{KR:ANFM}: we consider \emph{structured} grids,
    and we give a generalization in terms of \emph{conditions on the monomials}
    in the spirit of \cite{Sc:ASPD}. 
    Let $f \in \kxn$, and let $(m_1, \ldots, m_n) \in \NON$.
    We say that $\vb{c}$ is a zero with multiplicity vector
    $(m_1, \ldots, m_n)$
    if
    $f(\vb{x} + \vb{c}) \in \langle x_1^{m_1}, \ldots, x_n^{m_n} \rangle$.
    Let $T$ be a mapping from the finite set $U$ to $\N_0$. Then we also call
    $T$ a \emph{multiset} and $U$ the \emph{domain} of $T$.
    Let $S_1, \ldots, S_n$ be multisets with domains
    $U_1, \ldots, U_n$.
    For $n \in \N$, the \emph{multigrid} $S$ denoted by
    $\bigtimes_{i=1} S_i$ is a mapping with domain
    $U_1 \times \cdots \times U_n$, codomain $\NON$
    and $S (u_1, \ldots, u_n) := (S_1 (u_1), \ldots, S_n (u_n))$.
    Suppose that $U_1, \ldots, U_n$ are subsets of a field
    $\ob{K}$.
    Then we say that $f \in \kxn$ \emph{vanishes on the multigrid $S$} if
    for all $\vb{u} = (u_1, \ldots, u_n) \in \bigtimes_{i=1}^n U_i$,
    $\vb{u}$ is a zero with multiplicity vector
    $S (\vb{u}) = (S_1 (u_1), \ldots, S_n (u_n))$.
    The following result is a consequence of \cite[Theorem~1]{KR:ANFM}.
    \begin{thm} \label{thm:multiset}
      Let $S$ be a multigrid with domain $\bigtimes_{i=1}^n U_i$,
      and let $f \in \kxn$. Then
      $f$ vanishes on $S$ if and only if
      $f$ lies in the ideal $\submod{g_1, \ldots, g_n}$,
      where $g_i := \prod_{u \in U_i} (x_i - u)^{S_i (u)}$.
    \end{thm}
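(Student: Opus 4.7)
The plan is to prove the two inclusions $\submod{g_1,\ldots,g_n} \subseteq \I$ and $\I \subseteq \submod{g_1,\ldots,g_n}$ separately, where $\I := \{f \in \kxn \mid f \text{ vanishes on } S\}$. The forward inclusion is a direct substitution; the reverse will come from a dimension count of the quotient ring.

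For the forward direction, fix $\vb{u} = (u_1,\ldots,u_n) \in \bigtimes_{i=1}^n U_i$ and substitute $\vb{x}+\vb{u}$ into $g_i$:
\[
  g_i(\vb{x}+\vb{u}) = \prod_{v \in U_i}(x_i + u_i - v)^{S_i(v)}.
\]
The factor corresponding to $v = u_i$ is exactly $x_i^{S_i(u_i)}$, so $g_i(\vb{x}+\vb{u}) \in \submod{x_1^{S_1(u_1)},\ldots,x_n^{S_n(u_n)}}$. Hence $\vb{u}$ is a zero of $g_i$, and therefore of every $\kxn$-multiple of $g_i$, with multiplicity vector $S(\vb{u})$, giving $\submod{g_1,\ldots,g_n} \subseteq \I$.

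For the reverse direction, set $J := \submod{g_1,\ldots,g_n}$ and $d_i := \sum_{u \in U_i} S_i(u) = \deg(g_i)$. Since the leading monomials $\LM(g_i) = x_i^{d_i}$ are pairwise coprime, Buchberger's First Criterion shows that $\{g_1,\ldots,g_n\}$ is a Gr\"obner basis of $J$ with respect to every admissible monomial ordering; hence the monomials $\mono{e}$ with $e_i < d_i$ form an $\ob{K}$-basis of $\kxn/J$, so $\dim_{\ob{K}}(\kxn/J) = \prod_{i=1}^n d_i$. On the other hand, via the shift automorphism $\phi_{\vb{u}}: x_i \mapsto x_i + u_i$, the condition ``$\vb{u}$ is a zero of $f$ with multiplicity vector $S(\vb{u})$'' translates to $f \in J_{\vb{u}} := \submod{(x_1-u_1)^{S_1(u_1)},\ldots,(x_n-u_n)^{S_n(u_n)}}$, so $\I = \bigcap_{\vb{u}} J_{\vb{u}}$. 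For distinct $\vb{u} \neq \vb{v}$ one can pick $i$ with $u_i \neq v_i$; then $(x_i-u_i)^{S_i(u_i)}$ and $(x_i-v_i)^{S_i(v_i)}$ are already coprime in $\ob{K}[x_i]$, so the ideals $J_{\vb{u}}$ are pairwise comaximal in $\kxn$. The Chinese Remainder Theorem then yields
\[
   \kxn/\I \;\cong\; \prod_{\vb{u} \in \bigtimes_{i=1}^n U_i} \kxn/J_{\vb{u}},
\]
and each factor has $\ob{K}$-basis $\{(x_1-u_1)^{e_1}\cdots(x_n-u_n)^{e_n} \mid e_i < S_i(u_i)\}$. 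Summing gives $\dim_{\ob{K}}(\kxn/\I) = \sum_{\vb{u}} \prod_i S_i(u_i) = \prod_i d_i$, matching $\dim_{\ob{K}}(\kxn/J)$. Combined with $J \subseteq \I$, this forces $J = \I$.

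The main bookkeeping step is the identification $\I = \bigcap_{\vb{u}} J_{\vb{u}}$, which rests on carefully unwinding the definition of ``zero with multiplicity vector $S(\vb{u})$'' and observing that $\phi_{\vb{u}}^{-1}$ sends $\submod{x_1^{S_1(u_1)},\ldots,x_n^{S_n(u_n)}}$ to $J_{\vb{u}}$. The other ingredients -- coprimality of the leading monomials, Buchberger's criterion, comaximality of the $J_{\vb{u}}$, and the standard monomial basis of $\kxn/\submod{x_1^{m_1},\ldots,x_n^{m_n}}$ -- are routine.
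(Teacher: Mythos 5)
Your proof is correct, and it is a genuinely different route from the paper, which does not prove Theorem~\ref{thm:multiset} at all but simply cites it as a consequence of {\cite[Theorem~1]{KR:ANFM}}. Your argument is self-contained: the forward inclusion by direct substitution, and the reverse by a dimension count pairing the Gr\"obner-basis description of $\kxn/\submod{g_1,\ldots,g_n}$ (via Buchberger's First Criterion and standard monomials, giving dimension $\prod_i d_i$) against a Chinese-Remainder decomposition of $\kxn/\I$ over the pairwise comaximal local ideals $J_{\vb{u}}$ (giving $\sum_{\vb{u}}\prod_i S_i(u_i)=\prod_i d_i$). This is in the same spirit as the paper's own Lemmas~\ref{lem:mt} and~\ref{lem:im}, where a comaximality argument is used to identify $\I_t(X)$ with $\I(X)^t$, and it also fits naturally with the Gr\"obner-basis machinery the paper relies on elsewhere (e.g.\ Theorem~\ref{thm:gt} and Corollary~\ref{cor:rem}). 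What the citation buys is brevity and attribution; what your proof buys is a complete argument within the paper's own toolkit. One small remark: the degenerate cases where some $S_i(u_i)=0$ (so $J_{\vb{u}}=\kxn$ and the corresponding CRT factor is the zero ring) or some $d_i=0$ are handled automatically by the dimension bookkeeping, but it would be worth a sentence to say so explicitly, since the comaximality claim is vacuous for such $\vb{u}$ rather than requiring the coprimality argument.
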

    We say that a multiset $S_i : U_i \to \N_0$ is
    $\lambda_i$-null if $\prod_{u \in U_i} (x_i - u)^{S_i (u)}$ is
    $\lambda_i$-lacunary.
    Now our generalization of \cite[Theorem~6]{KR:ANFM} is:
    \begin{thm}[Structured Nullstellensatz for multisets using conditions on the monomials] \label{thm:multisetscnii}
      Let $S = \bigtimes_{i=1}^n S_i$ be a multigrid with
      domain $\bigtimes_{i=1}^n U_i$,
      let $f \in \kxn$, and let $\alpha = (\alpha_1, \ldots, \alpha_n)$ and
      $\lambda = (\lambda_1, \ldots, \lambda_n)$ be elements of $\NON$.
      We assume that each $S_i$ is $\lambda_i$-null.
       Let $||S_i|| := \sum_{u \in U_i} S_i (u)$.
       We assume that $\alpha_i < ||S_i||$ for all $i \in \ul{n}$ and
       that $f$ contains the monomial 
       $x_1^{\alpha_1} \cdots x_n^{\alpha_n}$.
   Furthermore,
  we assume that for every
  monomial $\mono{\gamma}$ in $\Mon (f) \setminus \{\mono{\alpha}\}$,
  there is $i \in \ul{n}$ such that
    \begin{equation} \label{eq:aich3multiset}
      \gamma_i \in [0, \alpha_i - 1] \union [\alpha_i + 1, ||S_i|| - 1]
               \union [||S_i||, \alpha_i + \lambda_i].
     \end{equation}
  Then $f$ does not vanish on the multigrid $S$.
    \end{thm}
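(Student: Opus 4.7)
The plan is to mimic the proof of Theorem~\ref{thm:cnv} almost verbatim, with the only substantive changes being that (i)~the generators of the vanishing ideal are supplied by Theorem~\ref{thm:multiset} rather than by \cite[Theorem~1.1]{Al:CN}, and (ii)~the side length $|S_i|$ is everywhere replaced by $\|S_i\|$.

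First I would set $\lambda := (\lambda_1,\ldots,\lambda_n)$, put $g_i := \prod_{u \in U_i} (x_i - u)^{S_i(u)}$, form $G := \{g_1,\ldots,g_n\}$, and let $I$ be the ideal of $\kxn$ generated by $G$. Theorem~\ref{thm:multiset} tells us that $f$ vanishes on the multigrid $S$ if and only if $f \in I$, so it suffices to show $f \notin I$. Since $S_i$ is $\lambda_i$-null, each $g_i$ is $\lambda$-lacunary, and $\LM(g_i) = x_i^{\|S_i\|}$.

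Next I would verify that $\mono{\alpha}$ is a $(G,\lambda)$-stable monomial in $f$ in the sense of Definition~\ref{de:stable2}. Condition~\eqref{it:st21} holds by hypothesis. For Condition~\eqref{it:st22}, the hypothesis $\alpha_i < \|S_i\|$ for all $i$ shows that no $\LM(g_i) = x_i^{\|S_i\|}$ divides $\mono{\alpha}$. For Condition~\eqref{it:st23}, I would run through each case of~\eqref{eq:aich3multiset} exactly as in the proof of Theorem~\ref{thm:cnv}: if $\gamma_i \in [0,\alpha_i-1]$, condition~\eqref{it:sha1} of Definition~\ref{de:shading2} already fails; if $\gamma_i \in [\alpha_i+1, \|S_i\|-1]$, then $\LM(g_j) = x_j^{\|S_j\|}$ cannot divide $\mono{\gamma}$ in the $i$-th variable, so~\eqref{it:sha2} fails; and if $\gamma_i \in [\|S_i\|, \alpha_i+\lambda_i]$, then $\gamma_i \le \alpha_i + \lambda_i$ makes~\eqref{it:sha3} fail. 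Hence no monomial of $f$ is $(G,\lambda,\alpha)$-shading, establishing stability.

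Finally, take any natural standard expression $f = \sum_{j=1}^t c_j \mono{\delta_j} g_{i_j} + r$ of $f$ by $G$. Corollary~\ref{cor:rem} gives $\mono{\alpha} \in \Mon(r)$, so in particular $r \neq 0$. Because the leading monomials $x_i^{\|S_i\|}$ of the polynomials in $G$ are pairwise coprime, Buchberger's first criterion (\cite[Lemma~5.66]{BW:GB}) shows that $G$ is a Gr\"obner basis of $I$, so every element of $I$ reduces to $0$. Therefore $f \notin I$, and by Theorem~\ref{thm:multiset} the polynomial $f$ does not vanish on the multigrid $S$. The only non-routine step is the careful case analysis of~\eqref{eq:aich3multiset} against the three conditions of Definition~\ref{de:shading2}, but this analysis is identical to the one carried out in Section~\ref{sec:cnv-proof}.
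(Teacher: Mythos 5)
Your proof is correct and follows the paper's own argument essentially verbatim: replace $|S_i|$ by $\|S_i\|$, invoke Theorem~\ref{thm:multiset} in place of Alon's description of $\I(S)$, verify $(G,\lambda)$-stability of $\mono{\alpha}$ by the same three-case analysis, and then appeal to Corollary~\ref{cor:rem} together with Buchberger's first criterion. The paper explicitly notes that its proof ``is an almost verbatim copy of the proof of Theorem~\ref{thm:cnv}'', which is exactly your plan, so there is nothing to add.
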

    \begin{proof}
   For each $i \in \ul{n}$, 
  let $g_i := \prod_{u \in U_i} (x_i - u)^{S_i (u)}$. Then
  the leading monomial of $g_i$ is $x_i^{||S_i||}$. Let
  $I$ be the ideal of $\kxn$ generated by
  $G = \{g_1, \ldots, g_n \}$. By Theorem~\ref{thm:multiset},
  a polynomial $f$ vanishes on the multigrid $S_1 \times \cdots \times S_n$
  if and only if it lies in $I$.
  The remainder of the proof is an almost verbatim copy of
  the proof
  of Theorem~\ref{thm:cnv}; again, we seek to apply
  Corollary~\ref{cor:rem}.
  For each $i \in \ul{n}$, $S_i$ is $\lambda_i$-null
  and therefore the polynomial $g_i$
  is $(\lambda_1, \ldots, \lambda_n)$-lacunary.
  We will now show that $\mono{\alpha}$ is a $(G,\lambda)$-stable monomial
  in $f$ with respect to $\lea$.
  First, we observe that for each $i \in \ul{n}$ we have
  $\alpha_i < ||S_i||$ and therefore the monomial
  $\LM(g_i)$ does not divide $\mono{\alpha}$, which establishes
  Condition~\eqref{it:st22} of Definition~\ref{de:stable2}.
  Next, we show that $f$ contains no $(G, \lambda, \alpha)$-shading
  monomial.
  Let $\mono{\gamma} \in \Mon (f)$. If $\alpha = \gamma$, then
  $\mono{\gamma}$ violates Condition~\eqref{it:sha1} of
  Definition~\ref{de:shading2} and is therefore not
  $(G,\lambda,\alpha)$-shading.
  If $\gamma \neq \alpha$, the assumption yields
  an $i \in \ul{n}$ such that
  $$\gamma_i \in [0, \alpha_i - 1] \union [\alpha_i + 1, ||S_i|| - 1] \union
           [||S_i||, \alpha_i + \lambda_i].$$
  If $\gamma_i \in [0, \alpha_i - 1]$, then Condition~\eqref{it:sha1}
  of Definition~\ref{de:shading2} is violated, and so $\mono{\gamma}$
  is not $(G,\lambda,\alpha)$-shading.
  If $\gamma_i \in [\alpha_i + 1, ||S_i|| - 1]$ and Condition~\eqref{it:sha2}
  of Definition~\ref{de:shading2} is satisfied, then
  $\LM(g_i) \mid \mono{\gamma}$, and therefore
  $|S_i| \le \gamma_i$, contradicting $\gamma_i \le ||S_i|| - 1$. We
  conclude that also in the case $\gamma_i \in [\alpha_i + 1, ||S_i|| - 1]$,
  $\mono{\gamma}$  is not $(G,\lambda,\alpha)$-shading.
  If $\gamma_i \in [||S_i||, \alpha_i + \lambda_i]$, then
  we have $\gamma_i \ge ||S_i|| > \alpha_i$. If Condition~\eqref{it:sha3}
  of Definition~\ref{de:shading2} is satisfied, we have
  $\gamma_i > \alpha_i + \lambda_i$, contradicting
  $\gamma_i \le \alpha_i + \lambda_i$. Hence also in this
  case $\mono{\gamma}$ is not $(G,\lambda,\alpha)$-shading.
  Therefore, $f$ contains no $(G, \lambda, \alpha)$-shading
  monomial. Therefore, $\mono{\alpha}$ is $(G, \lambda)$-stable.
  
     Let
   \(
      f = \sum_{j=1}^t c_j \, \mono{\delta_j} g_{i_j} + r
  \)
   be a natural standard expression of $f$ by $G$.
   Since $\mono{\alpha}$ is a $(G, \lambda)$-stable monomial in $f$,
   Corollary~\ref{cor:rem} yields that 
    $\mono{\alpha} \in \Mon(r)$. 
   Since the leading monomials of the polynomials in $G$ are coprime,
   \cite[Lemma~5.66]{BW:GB} yields that
   the set $G$ is a Gr\"obner basis of $I$
   (cf. \cite[p.89, Exercise 11]{CLO:IVAA4}).
  Since then all
  elements of $I$ have zero remainder in every standard expression
  by $G$,
  we obtain $f \not\in I$, and therefore, $f$ does not vanish on
  the multigrid $S$.
\end{proof}

    \section{Nullstellens\"atze for punctured and structured grids}
    \label{sec:punct}
    In this Section, we prove Theorem~\ref{thm:ballcn2} and
    Corollary~\ref{cor:ballcn2cor}. We start with the description
    of the vanishing ideal of a punctured grid. We note that
    a similar result that includes multiplicities has been
    given in~\cite[Theorem~4.1]{BS:PCN}, but our proof is
    different. For a subset $J$ of $\kxn$ and $f \in \kxn$, we write
    $\V(J)$ for the set $\{\vb{a} \in \kn \mid f (\vb{a}) = 0
    \text{ for all } f \in J \}$ of common zeros of $J$.
\begin{thm}[cf. {\cite[Theorem~4.1]{BS:PCN}}]  \label{thm:ballcn1}
  Let $X \setminus Y = (\bigtimes_{i=1}^n X_i) \setminus
  (\bigtimes_{i=1}^n Y_i)$ be a punctured grid with
  $Y_i \subseteq X_i$ for all $i \in \N$,
  let $g_i := \prod_{a \in X_i} (x_i - a)$, let
  $l_i := \prod_{a \in Y_i} (x_i - a)$, and let
  $f \in \kxn$. Then $f$ vanishes on $X \setminus Y$
  if and only if
  $f \in \submod{g_1, \ldots, g_n, \prod_{i=1}^n \frac{g_i}{l_i}}$.
  Furthermore, for every admissible monomial ordering $\lea$,
  $\{g_1, \ldots, g_n, \prod_{i=1}^n \frac{g_i}{l_i} \}$
  is a Gr\"obner basis  with respect to $\lea$.
\end{thm}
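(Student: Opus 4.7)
The plan is to introduce $u_i := g_i/l_i = \prod_{a \in X_i \ohne Y_i}(x_i - a)$ and $p := \prod_{i=1}^n u_i$, and to analyse $J := \submod{g_1, \ldots, g_n, p}$ in two steps. The inclusion $J \subseteq \I(X \ohne Y)$ is immediate: each $g_i$ already vanishes on $X$, and if $\vb{z} \in X \ohne Y$ then some coordinate $z_i$ lies in $X_i \ohne Y_i$, whence $u_i(z_i) = 0$ and $p(\vb{z}) = 0$.

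For the reverse inclusion I would combine Alon's Theorem \cite[Theorem~1.1]{Al:CN}, which gives $\I(X) = \submod{g_1, \ldots, g_n}$, with the evaluation isomorphism $\varphi : \kxn/\I(X) \to \kk^X$. Under $\varphi$, the image of $p$ is the function on $X$ that is zero on $X \ohne Y$ (by the previous paragraph) and nonzero on $Y$ (on $Y$ each coordinate $z_i \in Y_i$ is distinct from every element of the zero set $X_i \ohne Y_i$ of $u_i$). Since $\kk^X$ is a finite product of copies of $\kk$, its ideals are in bijection with subsets of $X$, so the principal ideal generated by $\varphi(p)$ consists of precisely the functions vanishing on $X \ohne Y$. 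Any $f \in \I(X \ohne Y)$ maps into this ideal, whence $f \in J$.

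For the Gr\"obner basis claim I would apply Buchberger's Criterion \cite[Theorem~5.64]{BW:GB}. The pairs $(g_i, g_j)$ with $i \neq j$ have coprime leading monomials $x_i^{|X_i|}$ and $x_j^{|X_j|}$, so their $S$-polynomials reduce to $0$ by Buchberger's First Criterion \cite[Lemma~5.66]{BW:GB}. The essential case is the pair $(g_i, p)$. Setting $D_i := \prod_{j \neq i} x_j^{|X_j|-|Y_j|}$ and $U_i := \prod_{j\neq i} u_j$ (so that $\LM(U_i) = D_i$), one has $\LM(p) = x_i^{|X_i|-|Y_i|} D_i$, the lcm with $\LM(g_i)$ equals $x_i^{|X_i|} D_i$, and hence $S(g_i, p) = D_i\, g_i - x_i^{|Y_i|}\, p$. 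The key identity $g_i U_i = l_i p$, immediate from $g_i = l_i u_i$, allows the rewriting
\[
   S(g_i, p) \;=\; (D_i - U_i)\, g_i \;+\; (l_i - x_i^{|Y_i|})\, p ,
\]
in which both summands have leading monomial strictly below the lcm, since $\LM(U_i) = D_i$ and $\LM(l_i) = x_i^{|Y_i|}$. This standard representation with strict drop in leading monomial forces $S(g_i, p)$ to reduce to $0$ modulo $G$ independently of $\lea$, so $G$ is a Gr\"obner basis. The main obstacle is spotting the identity $g_i U_i = l_i p$ and using it to coerce $S(g_i,p)$ into this controlled form; once that is in hand, the remaining leading-monomial bookkeeping is routine.
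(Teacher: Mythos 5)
Your proof is correct, and the first half takes a genuinely different route from the paper. The paper first rewrites $X\ohne Y = X\cap\V(p)$ with $p=\prod_i g_i/l_i$ and then invokes Clark's Finitesatz, namely $\I(X\cap\V(J))=\I(X)+J$ for finite $X$. You instead prove the reverse inclusion directly through the evaluation isomorphism $\varphi:\kxn/\I(X)\to\kk^X$, using that ideals of the product ring $\kk^X$ correspond to subsets of $X$ and that $\varphi(p)$ vanishes exactly on $X\ohne Y$ and is nonzero on $Y$. This is in effect a self-contained proof of precisely the instance of the Finitesatz that the paper cites, and it trades an external citation for a short concrete argument; the paper's version makes the dependence on that lemma explicit, while yours makes the mechanism visible. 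For the Gr\"obner-basis claim your algebra coincides with the paper's: both proofs hinge on the identity $g_i\prod_{j\neq i}(g_j/l_j)=l_i p$ to coerce $S(g_i,p)$ into a two-term combination of $g_i$ and $p$.

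Where you should be careful is the closing justification of the Gr\"obner claim. The paper verifies that the rewritten expression is a \emph{standard representation} in the Becker--Weispfenning sense by showing that the two summands (if both nonzero) have \emph{different} multidegrees, so no top-cancellation can occur and $\md$ of each summand is bounded by $\md(S(g_i,p))$. You instead observe only that both summands have leading monomial strictly below $\lcm(\LM(g_i),\LM(p))$. That weaker fact is indeed sufficient, but via the refined (``lcm-bounded'') form of Buchberger's criterion, not via the standard-representation characterization you implicitly invoke by calling your expression a ``standard representation'': being strictly below the lcm does not by itself preclude the two summands from sharing a top monomial whose cancellation drops $\md(S(g_i,p))$ below both. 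In the case at hand the summands do in fact have different multidegrees --- the term carrying $g_i$ has $x_i$-degree exactly $|X_i|$, while the term carrying $p$ has $x_i$-degree at most $(|Y_i|-1)+(|X_i|-|Y_i|)=|X_i|-1$ --- so your computation can be upgraded to a genuine standard representation with one extra sentence. As written, though, you should either add that sentence or cite the lcm form of Buchberger's criterion, whichever matches the bound you actually prove.
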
    
\begin{proof}
  We first compute generators of the vanishing ideal
  $\I (X \ohne Y)$.
  We have $X \setminus Y
  =
  X \setminus \bigcap_{i \in \ul{n}} X_1 \times \cdots \times X_{i-1}
  \times Y_i \times X_{i+1}  \times  \cdots \times X_n
  =
  X \cap \, \bigcup_{i \in \ul{n}} X_1 \times \cdots \times X_{i-1}
  \times (X_i \setminus Y_i) \times X_{i+1}  \times  \cdots \times X_n
  =
  X \cap \, \bigcup_{i \in \ul{n}}  \ob{K} \times \cdots \times \ob{K}
  \times (X_i \setminus Y_i) \times \ob{K} \times  \cdots \times \ob{K}
  =
  X \cap \, \bigcup_{i \in \ul{n}} \V (\frac{g_i}{l_i})
  =
  X \cap \, \V (\prod_{i \in \ul{n}} \frac{g_i}{l_i})$.
  Now for finite $X$, Clark's Finitesatz \cite[Theorem~7]{Cl:TCNR}
  tells that for any ideal
  $J$ of $\kxn$, we have $\I (X \cap \V(J)) = \I(X) + J$, and therefore
  \[
  \I (X \setminus Y) 
  = \I (X \cap \, \V (\prod_{i \in \ul{n}} \frac{g_i}{l_i}))
  =
  \I(X) + \langle
                          \prod_{i \in \ul{n}} \frac{g_i}{l_i}
                          \rangle =
                  \langle g_1, \ldots, g_n, \prod_{i \in \ul{n}} \frac{g_i}{l_i}
                  \rangle.
  \]
  In order to show that $G := \{ g_1, \ldots, g_n, \prod_{i \in \ul{n}} \frac{g_i}{l_i} \}$ is a Gr\"obner basis, we again use
  Buchberger's Characterization Theorem for Gr\"obner bases \cite[Theorem~5.64]{BW:GB},
  and we therefore look at the $S$-polynomials of $G$.
  First, we note that for $i, j \in \ul{n}$ with
  $i \neq j$, the leading monomials of $g_i$ and $g_j$ are coprime,
  and therefore the $S$-polynomial of $g_i$ and $g_j$ has a standard
  expression by $\{g_i, g_j\}$  with remainder $0$ (\cite[Lemma~5.66]{BW:GB});
  such an expression can
  also be obtained by setting $t := 1$ in the proof of Theorem~\ref{thm:gt}.
  For checking the other $S$-polynomials,
  let $j \in \ul{n}$, and let $p := \prod_{i \in \ul{n}} \frac{g_i}{l_i}$.
  We compute $S (g_j, p)$. Let $a_i := |X_i|$ and $b_i := |Y_i|$.
  Then
  \[
       \LM (g_j) = x_j^{a_j} \text{ and }
       \LM (p) = \prod_{i \in \ul{n}} x_i^{a_i - b_i}.
  \]
  Now let $q := \prod_{i \in \ul{n} \setminus \{j\}} \frac{g_i}{l_i}$; then
  $q \frac{g_j}{l_j} = p$. Then we have
   \begin{equation*}
   S (g_j, p) = (\prod_{i \in \ul{n} \setminus \{j\}} x_i^{a_i - b_i}) g_j
                -
                x_j^{b_j} p 
                =
                \LM (q) \, g_j
                - \LM (l_j) \, p.
   \end{equation*}                                
   Since $l_j p = q  g_j$,
   we have 
   \[
           \LM (q) \, g_j
           - \LM (l_j) \, p =
           (l_j - \LM (l_j)) \, p -
           (q - \LM(q)) \, g_j.           
    \]
    We show that $S(g_j , p) = (l_j - \LM (l_j)) \, p - (q - \LM(q)) \, g_j$
    is a standard expression of $S(g_j, p)$ by $\{g_j, p\}$.
  If at least one of the summands is $0$, we are done.
  Otherwise, we show 
    that the two summands have different multidegree.
     Suppose that both summands are nonzero and that they have the same multidegree. Then
    $\LM (g_j)$ divides $\LM ((l_j - \LM (l_j)) \, p)$.
    We have $\deg_{x_j} (\LM(g_j)) = a_j$ and
    $\deg_{x_j} (\LM ( (l_j - \LM (l_j))\, p))
    = \deg_{x_j} (\LM (l_j - \LM (l_j)) \, \LM (p))
    = \deg_{x_j} (\LM (l_j - \LM (l_j))) + \deg_{x_j} (\LM (p))
    \le \deg_{x_j} (\LM (l_j - \LM (l_j))) + \deg_{x_j} (p)
    \le (b_j - 1) + (a_j - b_j) =
    a _j - 1$.
    But then $\LM (g_j) \nmid \LM ((l_j - \LM (l_j))\, p)$.

    Since all $S$-polynomials have a standard expression by $G$ with
    remainder $0$, Buchberger's Characterization Theorem yields
    that $G$ is a Gr\"obner basis.
\end{proof}

\begin{proof}[Proof of Theorem~\ref{thm:ballcn2}]
  Let $p := \prod_{i=1}^n \frac{g_i}{l_i}$, and  let
  $G' := \{g_1, \ldots, g_n, p \}$.
  We first show that $\mono{\alpha}$ is a $(G',\lambda)$-stable monomial
  in $f$.
  Let us first show that there is no $g \in G'$ with
  $\LM(g) \mid \mono{\alpha}$. If there is $i \in \ul{n}$ with
  $\LM (g_i) \mid \mono{\alpha}$, then $|X_i| \le \alpha_i$,
  contradicting Assumption~\eqref{it:al1}.
  If $\LM(p) \mid \mono{\alpha}$, then
  we have $|X_i| - |Y_i| \le \alpha_i$  for all $i \in \ul{n}$,
  contradicting Assumption~\eqref{it:al2}.
  It remains to show that $f$ contains no $(G, \lambda, \alpha)$-shading
  monomial. Let $\mono{\gamma} \in \Mon (f)$. By the assumptions,
  there is $i \in \ul{n}$ such that one of the Conditions~\eqref{it:al3a},
  \eqref{it:al3b}, \eqref{it:al3c} holds.
  We fist consider the case that $\gamma_i \in [0, \alpha_i - 1]$.
  Then Condition~\eqref{it:sha1} of Definition~\ref{de:shading2} fails,
  and thus $\mono{\gamma}$ is not $(G', \lambda, \alpha)$-shading.
  In the case $\gamma_i \in [\alpha_i + 1, \alpha_i + \lambda_i]$,
  Condition~\eqref{it:sha3} of Definition~\ref{de:shading2} fails,
  and thus $\mono{\gamma}$ is not $(G', \lambda, \alpha)$-shading.
  In the case that $\gamma_i \in [\alpha_i + 1, |X_i| - 1]$ and $|X_i| = |Y_i|$,
  we show that Condition~\eqref{it:sha2} of Definition~\ref{de:shading2} fails.
  Seeking a contradiction, we suppose that Condition~\eqref{it:sha2}
  holds. Then there is $g \in G'$ with $\deg_{x_i} (g) > 0$ and
  $\LM(g) \mid \mono{\gamma}$. Since $|X_i| = |Y_i|$, $\deg_{x_i} (p) = 0$
  and thus $g = g_i$. Hence $|X_i| \le \gamma_i$, contradicting the assumptions.
  Thus Condition~\eqref{it:sha2} of Definition~\ref{de:shading2} fails,
  and $\gamma$ is not $(G', \lambda, \alpha)$-shading.
  Now we consider the case that $\gamma_i \in [\alpha_i + 1, |X_i| - 1]$ and that there is $j \in \ul{n}$ with $\gamma_j < |X_j| - |Y_j|$.
  Supposing again that $\gamma$ is $(G',\lambda, \alpha)$-shading,
  we obtain that $\LM(g_i) \mid \mono{\gamma}$ or $\LM(p) \mid \mono{\gamma}$.
  If $\LM (g_i) \mid \mono{\gamma}$, then $|X_i| \le \gamma_i$, contradicting
  the assumptions. If $\LM (p) \mid \mono{\gamma}$, then
  for all $j \in \ul{n}$, we have $|X_j| - |Y_j| \le \gamma_j$. This is
  also excluded by the case assumption.
  
  Therefore $\mono{\alpha}$ is a $(G',\lambda)$-stable
  monomial in $f$. Let $g_{n+1} := p$ and 
     let
   \(
      f = \sum_{j=1}^t c_j \, \mono{\delta_j} g_{i_j} + r
  \)
  be a natural standard expression of $f$ by $G'$.
  We note that $g_1, \ldots, g_n$ are $\lambda$-lacunary by
  assumption. Furthermore, for each $i \in \ul{n}$,
  $g_i$ and $l_i$ are 
  $\lambda$-lacunary and hence by Lemma~\ref{lem:proquolac},
  $p = \prod_{i=1}^n \frac{g_i}{l_i}$ is $\lambda$-lacunary.
    Since $\mono{\alpha}$ is a $(G', \lambda)$-stable monomial in $f$
  and all $g' \in G'$ are $\lambda$-lacunary,
   Corollary~\ref{cor:rem} yields that 
    $\mono{\alpha} \in \Mon(r)$. 
   By Theorem~\ref{thm:ballcn1}, 
   the set $G'$ is a Gr\"obner basis of $\I (X \setminus Y)$.
     Since then all
  elements of $\I (X \setminus Y)$ have zero remainder in every standard expression
  by $G'$,
  we obtain $f \not\in \I (X \setminus Y)$, and therefore,
  there is $\vb{s} \in X \setminus Y$ with $f (\vb{s}) \neq 0$.
\end{proof}  

\begin{proof}[Proof of Corollary~\ref{cor:ballcn2cor}]
  We show that for $\lambda_1 = \cdots = \lambda_n
  = \lambda$, the assumptions of
  Theorem~\ref{thm:ballcn2} are satisfied.
  Assume that Assumption~\eqref{it:al3} of Theorem~\ref{thm:ballcn2} fails.
  Then there is a monomial $\mono{\gamma} \in \Mon (f) \ohne \{\alpha\}$
  such that for all $i \in \ul{n}$,
  $\gamma_i \not\in [0, \alpha_i - 1] \union [\alpha_i + 1, \alpha_i + \lambda_i]$, which means that for all $i \in \ul{n}$, $\gamma_i = \alpha_i$ or
  $\gamma_i > \alpha_i + \lambda_i$.  Then
  $\deg (f) \ge \deg (\mono{\gamma}) > (\sum_{i=1}^n \alpha_i) + \lambda$,
  contradicting Assumption~\eqref{it:al3new} of
  Corollary~\ref{cor:ballcn2cor}.
  Now Theorem~\ref{thm:ballcn2} yields the result.
 \end{proof}

\section{A punctured version of the Alon-F\"uredi Theorem} \label{sec:AF}
In \cite{Cl:ATCW}, Clark gives a proof of the Alon-F\"uredi Theorem for
counting nonzeros of polynomials on grids that is based
on the fact that for a finite set $X \subseteq \kn$,
the vector space dimension of $\kxn/\I(X)$ is equal to
$|X|$, and this dimension
can be determined as the number of monomials $\mono{\alpha}$
that are not leading monomials of any polynomial in $\I(X)$.
We proceed by adapting some methods from~\cite{Cl:ATCW} to punctured grids. 
For a subset $G$ of $\kxn$ and an admissible monomial ordering
$\le_a$, we define two subsets of
$\{\mono{\alpha} \mid \alpha \in \NON \}$.
\[
   \begin{array}{rcl}
    \filter{G} & := & \{ \mono{\alpha} \mid \alpha \in \NON \text{ and } \exists g \in G : \LM (g) \text{ divides }
    \mono{\alpha} \}, \\
    \cofilter{G} & := & \{\mono{\alpha} \mid \alpha \in \NON\} \ohne (\filter{G}) = \\ &  &
    \{ \mono{\alpha} \mid \alpha \in \NON \text{ and }\text{there is no } g \in G \text{
        such that } \LM (g) \text{ divides } \mono{\alpha} \}.
   \end{array}
\]
\begin{thm}[Clark's formula, \cite{Cl:ATCW}] \label{thm:cf}
  Let $\ob{K}$ be a field, and
  let $X$ be a finite subset of $\kn$, and let $f \in \kxn$.
  Then
  \[
     |X \ohne \V (f)| =
     | \cofilter{\I (X)} \, \ohne \, \cofilter{\I(X) + \submod{f}}|.
  \]
\end{thm}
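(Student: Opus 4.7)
The plan is to show both sides of the claimed equation equal $|X| - |X \cap \V(f)|$, by appealing to the standard dimension count for quotients of $\kxn$ by ideals.

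The central tool I would invoke is the classical fact from Gr\"obner basis theory that, for any ideal $I$ of $\kxn$ and any admissible ordering $\lea$, the set $\cofilter{I}$ of standard monomials is a $\kk$-vector space basis of $\kxn / I$. This follows from the division algorithm applied to any Gr\"obner basis of $I$: every polynomial reduces to a unique $\kk$-linear combination of standard monomials, and no nontrivial such combination lies in $I$. In particular, $\dim_\kk \kxn / I = |\cofilter{I}|$ whenever $\cofilter{I}$ is finite.

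First, I would apply this to $I := \I(X)$. Since $X$ is finite, Lagrange-style interpolation (produce a polynomial taking value $1$ at $\vb{a} \in X$ and $0$ on $X \setminus \{\vb{a}\}$) shows the evaluation map $\kxn / \I(X) \to \kk^X$ is a $\kk$-linear isomorphism, so $|\cofilter{\I(X)}| = \dim_\kk \kxn / \I(X) = |X|$. Second, I would set $Y := X \cap \V(f)$, apply Clark's Finitesatz \cite[Theorem~7]{Cl:TCNR} with $J = \submod{f}$ to obtain $\I(Y) = \I(X) + \submod{f}$, and conclude $|\cofilter{\I(X) + \submod{f}}| = |Y| = |X \cap \V(f)|$. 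Third, from $\I(X) \subseteq \I(X) + \submod{f}$ I get $\filter{\I(X)} \subseteq \filter{\I(X) + \submod{f}}$ and hence $\cofilter{\I(X) + \submod{f}} \subseteq \cofilter{\I(X)}$, so these cofilters may be subtracted cardinally:
\[
  |\cofilter{\I(X)} \setminus \cofilter{\I(X) + \submod{f}}|
  = |\cofilter{\I(X)}| - |\cofilter{\I(X) + \submod{f}}|
  = |X| - |X \cap \V(f)| = |X \setminus \V(f)|.
\]

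No step is conceptually hard; the only item that requires any care is the assertion that $\cofilter{I}$ indexes a $\kk$-basis of $\kxn/I$, which rests on the existence of a Gr\"obner basis of $I$ together with the uniqueness of normal forms under multivariate polynomial division. For the ideals in question one may produce explicit Gr\"obner bases (as in Theorem~\ref{thm:ballcn1}), but for the general statement I would simply cite this foundational fact from Gr\"obner basis theory (e.g.\ \cite{BW:GB}) rather than rederive it.
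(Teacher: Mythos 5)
Your proof is correct and follows essentially the same route as the paper's: both establish $\dim_\kk \kxn/\I(Y)=|Y|$ for finite $Y$ via the evaluation map and interpolation, identify this dimension with $|\cofilter{\I(Y)}|$ via the standard-monomial basis, invoke Clark's Finitesatz to rewrite $\I(X\cap\V(f))$ as $\I(X)+\submod{f}$, and then use the containment of cofilters to convert the difference of cardinalities into the cardinality of the set difference.
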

\begin{proof}
  For every finite subset $Y$ of $\kn$, we have
  \begin{equation} \label{eq:sizeY}
    |Y|= |\cofilter{\I (Y)}|.
  \end{equation}
  To prove this, let $\epsi : \kxn \to \kk^Y$, $\epsi (p) := \hat{p}|_Y$,
  where $\hat{p}$ is the function from $\kn$ to $\kk$ induced
  by $p$, and $\hat{p}|_Y$ denotes its restriction to $Y$.
  Then $\ker \epsi = \I (Y)$, and $\epsi$ is surjective because
  every mapping from $Y$ to $\kk$ can be interpolated by a polynomial.
  Since $\epsi$ is a homomorphism of $\kk$-vector spaces, we 
  get $\dim_{\kk} (\kxn / \I(Y)) = \dim_{\kk} (\kk^Y) = |Y|$.
  The vector space dimension of the residue class ring
  $\dim_{\kk} \kxn / \I(Y)$ is equal to $|\cofilter{\I(Y)}|$
  because $\{ \mono{\alpha} + \I(Y) \mid \mono{\alpha} \in \cofilter{\I(Y)} \}$ is
  a basis of this vector space.
  This proves~\eqref{eq:sizeY}.
  Hence
  $|X \cap \V (f)| = |\cofilter{\I (X \cap \V(f))}|$.
  By Clark's Finitesatz \cite[Theorem~7]{Cl:TCNR},
  the last expression is
  equal to $|\cofilter{\I(X) + \submod{f}}|$.
  Thus $|X \ohne \V(f)| =
  |X| - |X \cap \V(f)| =
  |\cofilter{\I(X)}| - |\cofilter{\I(X) + \submod{f}}|$.
  Since $\I(X) \subseteq \I(X) + \submod{f}$, we have
  $\cofilter{\I(X) + \submod{f}} \subseteq \cofilter{\I(X)}$,
  and therefore
  $|\cofilter{\I(X)}| - |\cofilter{\I(X) + \submod{f}}|
   = |\cofilter{\I(X)} \, \ohne \, \cofilter{\I(X) + \submod{f}}|$.
\end{proof}
For $a, b \in \N_0$, we will denote the
interval $\{x \in \N_0 \mid a \le x < b\}$ by $[a, b)$.
  \begin{lem} \label{lem:punctured}
      Let $X = \bigtimes_{i=1}^n X_i, Y = \bigtimes_{i=1}^n Y_i$ be grids over the
      field $\kk$ with $Y_i \subseteq X_i$ for all $i \in \ul{n}$,
      and for each $i \in \ul{n}$,
  let $a_i := |X_i|$ and $b_i := |Y_i|$. Then
  $\cofilter{\I (X \ohne Y)} = \{ \mono{\alpha} \mid
  \alpha \in \bigtimes_{i \in \ul{n}} [0, a_i) \setminus
    \bigtimes_{i \in \ul{n}} [a_i - b_i, a_i) \}$.
\end{lem}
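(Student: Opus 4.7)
The plan is to reduce the statement to Theorem~\ref{thm:ballcn1}, which already identifies an explicit Gr\"obner basis of $\I(X \setminus Y)$, and then translate the divisibility conditions on leading monomials into the claimed combinatorial description of exponent vectors.

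First, I would invoke Theorem~\ref{thm:ballcn1} with respect to any admissible monomial ordering $\lea$ to obtain the Gr\"obner basis $G' = \{g_1, \ldots, g_n, p\}$ of $\I(X \setminus Y)$, where $g_i = \prod_{a \in X_i}(x_i - a)$ and $p = \prod_{i=1}^n g_i/l_i$. Because $G'$ is a Gr\"obner basis, the set of leading monomials of $\I(X \setminus Y)$ is precisely $\filter{G'}$, so a monomial $\mono{\alpha}$ belongs to $\cofilter{\I(X \ohne Y)}$ if and only if no $\LM(g)$ with $g \in G'$ divides $\mono{\alpha}$.

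Next, I would compute the leading monomials explicitly: $\LM(g_i) = x_i^{a_i}$ for each $i \in \ul{n}$, and $\LM(p) = \prod_{i=1}^n x_i^{a_i - b_i}$ (both independent of the particular admissible ordering, since $g_i$ and $p$ are products of linear factors whose dominant exponent vectors are already $\sqsubseteq$-maximal in their respective supports). The condition $\LM(g_i) \nmid \mono{\alpha}$ is equivalent to $\alpha_i < a_i$, so requiring it simultaneously for all $i \in \ul{n}$ translates into $\alpha \in \bigtimes_{i \in \ul{n}} [0, a_i)$. Similarly, $\LM(p) \nmid \mono{\alpha}$ is equivalent to the existence of some $i \in \ul{n}$ with $\alpha_i < a_i - b_i$, i.e., to $\alpha \notin \bigtimes_{i \in \ul{n}} [a_i - b_i, a_i)$ once the previous set of conditions is in place.

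Conjoining these two equivalences yields exactly the claimed identity. The substantive content is already supplied by Theorem~\ref{thm:ballcn1}; the only thing that remains is this verbatim translation of divisibility conditions into conditions on exponent coordinates, so I do not anticipate any genuine obstacle.
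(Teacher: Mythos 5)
Your proposal is correct and follows essentially the same route as the paper: invoke Theorem~\ref{thm:ballcn1} to obtain the Gröbner basis $G'$, use the Gröbner basis property to reduce $\cofilter{\I(X \ohne Y)}$ to $\cofilter{G'}$, and then translate the non-divisibility conditions on $\LM(g_i) = x_i^{a_i}$ and $\LM(p) = \prod_i x_i^{a_i-b_i}$ into the stated combinatorial description. The only difference is that you spell out the coordinate-wise translation a bit more explicitly than the paper's chain of set equalities.
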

\begin{proof}
  Let $g_i := \prod_{a \in X_i} (x_i - a)$, and 
  $l_i := \prod_{a \in Y_i} (x_i - a)$. Then by
  Theorem~\ref{thm:ballcn1},
  $G' := \{g_1, \ldots, g_n, \prod_{i=1}^n \frac{g_i}{l_i} \}$ is
  a Gr\"obner basis for $\I(X \setminus Y)$.
  Hence $\cofilter { \I(X \setminus Y)} =
         \cofilter {  G' }=$ \\ $
         \cofilter {\{g_1, \ldots, g_n\}} \, \cap \,
         \cofilter {\{\prod_{i=1}^n \frac{g_i}{l_i} \}}
         =
         \cofilter {\{g_1, \ldots, g_n\}} \ohne
         \filter {\{\prod_{i=1}^n \frac{g_i}{l_i} \}}
         = \{ \mono{\alpha} \mid  \alpha \in \bigtimes_{i \in \ul{n}} [0, a_i) \ohne
           \bigtimes_{i \in \ul{n}} [a_i - b_i , a_i) \}$.
\end{proof}  

\begin{thm}[Clark's Monomial Alon-F\"uredi Theorem \cite{Cl:ATCW}] \label{thm:afc1}
  Let $X$ be a finite subset of $\kn$, let $f \in \kxn$, and
  let $g \in \I(X) + \submod{f}$ with $g \neq 0$. Then
  \[
  |X \ohne \V (f)| \ge |\cofilter{\I (X)} \, \cap \, \filter{ \{\LM (g) \} }|
  \, .
  \]
\end{thm}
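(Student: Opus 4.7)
The plan is to reduce the inequality to Clark's formula (Theorem~\ref{thm:cf}), which already expresses $|X \ohne \V(f)|$ as the cardinality of a set-theoretic difference of monomial sets. Specifically, Theorem~\ref{thm:cf} gives
\[
   |X \ohne \V (f)| = |\cofilter{\I (X)} \, \ohne \, \cofilter{\I(X) + \submod{f}}|,
\]
so it suffices to establish the set inclusion
\[
   \cofilter{\I (X)} \, \cap \, \filter{\{\LM (g)\}} \;\subseteq\; \cofilter{\I(X)} \, \ohne \, \cofilter{\I(X) + \submod{f}},
\]
because then taking cardinalities immediately yields the claimed lower bound.

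For the inclusion, I would take an arbitrary $\mono{\alpha}$ in the left-hand side and show it lies in the right-hand side. Membership in $\filter{\{\LM(g)\}}$ means that $\LM(g)$ divides $\mono{\alpha}$. Since $g$ is a nonzero element of the ideal $\I(X) + \submod{f}$, the existence of this divisibility witnesses that $\mono{\alpha} \in \filter{\I(X) + \submod{f}}$, equivalently $\mono{\alpha} \not\in \cofilter{\I(X) + \submod{f}}$. Combined with the assumption that $\mono{\alpha} \in \cofilter{\I(X)}$, this places $\mono{\alpha}$ in the right-hand side of the desired inclusion, completing the argument.

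There is essentially no obstacle: the theorem is a direct unwinding of the definitions of $\filter{\cdot}$ and $\cofilter{\cdot}$ together with Clark's formula. The only thing to be careful about is the role of the hypothesis $g \neq 0$: it ensures that $\LM(g)$ is a well-defined monomial, so that $\filter{\{\LM(g)\}}$ makes sense. No further properties of $g$ beyond its membership in $\I(X) + \submod{f}$ and nonzeroness are used, which is consistent with the statement and explains why the bound holds for any choice of such $g$ (one would then optimize over $g$ in applications).
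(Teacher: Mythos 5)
Your proof is correct, and it takes a mildly different route from the paper's. The paper first observes geometrically that $g \in \I(X) + \submod{f}$ forces $X \cap \V(f) \subseteq X \cap \V(g)$, hence $|X \ohne \V(f)| \ge |X \ohne \V(g)|$; it then applies Theorem~\ref{thm:cf} to the polynomial $g$ to obtain $|X \ohne \V(g)| = |\cofilter{\I(X)} \ohne \cofilter{\I(X) + \submod{g}}|$ and finally compares $\filter{(\I(X) + \submod{g})}$ with $\filter{\{\LM(g)\}}$. You instead apply Theorem~\ref{thm:cf} directly to $f$ and let the hypothesis on $g$ enter purely at the combinatorial level: since $g$ is a nonzero element of $\I(X) + \submod{f}$, any monomial divisible by $\LM(g)$ lies in $\filter{(\I(X) + \submod{f})}$, giving $\cofilter{\I(X)} \cap \filter{\{\LM(g)\}} \subseteq \cofilter{\I(X)} \ohne \cofilter{\I(X) + \submod{f}}$, after which taking cardinalities finishes. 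Both proofs rest on the same two ingredients (Clark's formula and the ideal membership of $g$), but yours is slightly more streamlined because it avoids the intermediate comparison of the varieties $\V(f)$ and $\V(g)$, packaging that content inside a single filter inclusion; the $g \neq 0$ hypothesis plays exactly the role you identify, ensuring $\LM(g)$ is well defined so that it can serve as the witness for membership in $\filter{(\I(X) + \submod{f})}$.
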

\begin{proof}
  Since $g \in \I(X) + \submod{f}$, $g$ vanishes on $\V(f) \cap X$,
  and thus $X \cap \V(f) \subseteq X \cap \V(g)$, and therefore
  $X \ohne \V(f) \supseteq X \ohne \V(g)$.
  Now by Theorem~\ref{thm:cf},
  we have $|X \ohne \V (g)| =
  | \cofilter{\I (X)} \ohne \cofilter{\I(X) + \submod{g}}|$.
  In addition, $\cofilter{\I (X)} \ohne \cofilter{\I(X) + \submod{g}} =
  \cofilter{\I (X)} \cap \filter{(\I(X) + \submod{g})}
  \supseteq
  \cofilter{\I(X)} \cap \filter{\{g\} } =
  \cofilter{\I(X)} \cap \filter{\{ \LM(g) \} }$.
  Altogether, $|X \ohne \V(f)| \ge
  |\cofilter{\I(X)} \cap \filter{\{ \LM(g) \} }|$.
\end{proof}

\begin{thm}[Alon-F\"uredi-Clark for punctured grids] \label{tm:afcp}
  Let $X = \bigtimes_{i=1}^n X_i, Y = \bigtimes_{i=1}^n Y_i$ be grids over the
   field $\kk$ with $Y_i \subseteq X_i$ for all $i \in \ul{n}$, let
   $P := X \setminus Y$, let
  let $f \in \kxn$, and
  for $i \in \ul{n}$, let
  $a_i := |X_i|$ and $b_i := |Y_i|$, and let $\lea$ be
  an admissible monomial ordering.
  Let $g \in \I(P) + \submod{f}$ with
  $\LM (g) = x_1^{e_1} \cdots x_n^{e_n}$ and $e_i < a_i$ for
  all $i \in \ul{n}$. Then
  \[
   |P \ohne \V (f)| \ge \prod_{i=1}^n (a_i - e_i)
  - \prod_{i = 1}^n \min (b_i, a_i - e_i).
  \]
\end{thm}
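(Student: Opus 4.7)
The plan is to reduce the statement to the combinatorial problem of counting lattice points in a box minus a sub-box, using the two preparatory results (Theorem~\ref{thm:afc1} and Lemma~\ref{lem:punctured}) as the two halves of the machinery.

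First I would invoke Theorem~\ref{thm:afc1} with the finite set $X$ replaced by $P = X \setminus Y$. Since $g \in \I(P) + \submod{f}$ by hypothesis, this yields
\[
  |P \setminus \V(f)| \;\ge\; \bigl|\cofilter{\I(P)} \cap \filter{\{\LM(g)\}}\bigr|.
\]
The rest of the argument is a calculation of the right-hand side.

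Next I would describe each of the two sets explicitly. By Lemma~\ref{lem:punctured},
\[
   \cofilter{\I(P)} = \{\mono{\alpha} \mid \alpha \in \textstyle\bigtimes_{i=1}^n [0,a_i) \;\setminus\; \bigtimes_{i=1}^n [a_i-b_i, a_i)\}.
\]
Directly from the definition of $\filter{\cdot}$, a monomial $\mono{\alpha}$ lies in $\filter{\{\LM(g)\}}$ precisely when $\LM(g) \mid \mono{\alpha}$, i.e.\ when $\alpha_i \ge e_i$ for every $i \in \ul{n}$. Intersecting these two descriptions and using the assumption $e_i < a_i$, I would rewrite the intersection as
\[
   \textstyle\bigtimes_{i=1}^n [e_i, a_i) \;\setminus\; \bigtimes_{i=1}^n [\max(e_i, a_i - b_i), a_i),
\]
noting that the subtracted box is genuinely contained in the outer box since $\max(e_i, a_i-b_i) \ge e_i$ and both share the upper endpoint $a_i$.

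Finally I would count: the outer box has $\prod_{i=1}^n (a_i - e_i)$ elements, and the inner box has $\prod_{i=1}^n (a_i - \max(e_i, a_i-b_i)) = \prod_{i=1}^n \min(a_i - e_i, b_i)$ elements, so their difference gives exactly the claimed lower bound. There is no real obstacle here; the only care needed is verifying the inner-box containment, which relies on the hypothesis $e_i < a_i$ (so that the outer box is nonempty) and on the elementary identity $a_i - \max(e_i, a_i-b_i) = \min(a_i - e_i, b_i)$.
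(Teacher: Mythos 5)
Your argument is correct and matches the paper's proof essentially step for step: both invoke Theorem~\ref{thm:afc1} with $X := P$, then use Lemma~\ref{lem:punctured} together with the description of $\filter{\{\LM(g)\}}$ as $\bigtimes_{i}[e_i,a_i)$, rewrite the intersection as $\bigtimes_{i}[e_i,a_i)\setminus\bigtimes_{i}[\max(e_i,a_i-b_i),a_i)$, and count via the identity $a_i-\max(e_i,a_i-b_i)=\min(a_i-e_i,b_i)$. The containment check you emphasize is exactly what the paper handles by writing $A\setminus B = A\setminus(A\cap B)$, so there is no substantive difference.
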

\begin{proof}
  By Theorem~\ref{thm:afc1}, we have
  \[
  |P \ohne \V(f)| \ge |\cofilter{\I (P)} \cap \filter{ \{ \LM(g) \} }|.
  \]
  By Lemma~\ref{lem:punctured}, we have
  $\cofilter{\I (P)} \cap \filter{ \{ \LM(g) \} }
  = \{ x^{\alpha} \mid \alpha \in E \}$,
  where $E$ is given by
  \[
    E = (\bigtimes_{i \in \ul{n}} [0, a_i) \setminus
         \bigtimes_{i \in \ul{n}} [a_i - b_i, a_i) )
           \cap
           \bigtimes_{i \in \ul{n}} [e_i, a_i).
  \]
  Then we have
  \begin{multline*}
     E =   \bigtimes_{i \in \ul{n}} [e_i, a_i) \ohne
           \bigtimes_{i \in \ul{n}} [a_i - b_i, a_i) )
       =   \bigtimes_{i \in \ul{n}} [e_i, a_i) \ohne
         (\bigtimes_{i \in \ul{n}} [e_i, a_i) \cap
           \bigtimes_{i \in \ul{n}} [a_i - b_i, a_i) ) \\
       =  \bigtimes_{i \in \ul{n}} [e_i, a_i) \ohne
         \bigtimes_{i \in \ul{n}} [\max (e_i, a_i - b_i), a_i).
           \end{multline*}
    Since $a_i - \max (e_i, a_i - b_i) = \min (a-e_i, b_i)$,       
    we can now compute $|E| = \prod_{i=1}^n (a_i - e_i) -
    \prod_{i=1}^n \min (b_i, a_i - e_i)$.
\end{proof}
We note that similar to Clark's version for unpunctured grids in \cite{Cl:ATCW},
this lower bound can be attained for every choice of $(e_1, \ldots, e_n)$
with $e_i < a_i$ for all $i \in \ul{n}$.
\begin{pro}
    Let $X = \bigtimes_{i=1}^n X_i, Y = \bigtimes_{i=1}^n Y_i$ be grids over the
   field $\kk$ with $Y_i \subseteq X_i$ for all $i \in \ul{n}$, and let
   $P := X \setminus Y$.
     For each $i \in \ul{n}$, let
     $a_i := |X_i|$, $b_i := |Y_i|$, let
     $e_i \in [0, a_i - 1)$ and let $E_i \subseteq X_i$ be such that
       $|E_i| = e_i$ and
       ($E_i \subseteq X_i \setminus Y_i$ or
        $X_i \setminus Y_i \subseteq E_i$).
       Let $f_i := \prod_{a \in E_i} (x_i - a)$ and
       $f := \prod_{i=1}^n f_i$.
       Then for every admissible monomial ordering,
       $\LM (f)= x_1^{e_1} \cdots x_n^{e_n}$, and
       $|P \setminus \V(f)| =  \prod_{i=1}^n (a_i - e_i)
       - \prod_{i = 1}^n \min (b_i, a_i - e_i)$.
\end{pro}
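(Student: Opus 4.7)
The leading monomial claim is immediate: each $f_i$ is a univariate polynomial in $x_i$ of degree $|E_i|=e_i$, so $\LM(f_i)=x_i^{e_i}$. Since the $f_i$ involve pairwise disjoint sets of variables, $\LM(f)=\prod_{i=1}^n\LM(f_i)=x_1^{e_1}\cdots x_n^{e_n}$, independently of the admissible ordering chosen.

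The plan for the counting formula is to rewrite $P\setminus\V(f)$ as a direct difference of two product sets. Since $f=\prod_i f_i$, a point $\vb{s}\in\kn$ lies in $\V(f)$ iff $s_i\in E_i$ for some $i$, hence $\vb{s}\notin\V(f)$ iff $s_i\in X_i\setminus E_i$ for all $i$. Setting $A_i:=X_i\setminus E_i$ (so $|A_i|=a_i-e_i$), I would first note that $\vb{s}\in P\setminus\V(f)$ iff $\vb{s}\in\bigtimes_{i=1}^n A_i$ and there exists $i$ with $s_i\notin Y_i$; equivalently $\vb{s}\in\bigtimes_i A_i$ and $\vb{s}\notin\bigtimes_i(A_i\cap Y_i)$. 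Consequently
\[
|P\setminus\V(f)|=\prod_{i=1}^n(a_i-e_i)-\prod_{i=1}^n|Y_i\setminus E_i|.
\]

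It therefore suffices to verify the identity $|Y_i\setminus E_i|=\min(b_i,a_i-e_i)$ for every $i$, which is the one step that actually uses the hypothesis on $E_i$. I would handle the two alternatives separately. If $E_i\subseteq X_i\setminus Y_i$, then $E_i\cap Y_i=\emptyset$, so $|Y_i\setminus E_i|=b_i$; and the containment forces $e_i\le a_i-b_i$, i.e.\ $b_i\le a_i-e_i$, making $b_i=\min(b_i,a_i-e_i)$. If instead $X_i\setminus Y_i\subseteq E_i$, then $E_i\cap Y_i=E_i\setminus(X_i\setminus Y_i)$ has size $e_i-(a_i-b_i)$, hence $|Y_i\setminus E_i|=b_i-(e_i+b_i-a_i)=a_i-e_i$; and the containment forces $a_i-b_i\le e_i$, i.e.\ $a_i-e_i\le b_i$, giving $a_i-e_i=\min(b_i,a_i-e_i)$.

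There is essentially no serious obstacle here; the only subtlety is keeping the two cases in the hypothesis straight, because each one makes a different side of the minimum tight. Once $|Y_i\setminus E_i|=\min(b_i,a_i-e_i)$ is in place, substituting into the displayed equation gives the claimed value $\prod_{i=1}^n(a_i-e_i)-\prod_{i=1}^n\min(b_i,a_i-e_i)$, which matches the lower bound in Theorem~\ref{tm:afcp} and so proves it is attained.
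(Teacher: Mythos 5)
Your proof is correct and follows essentially the same route as the paper: identify $X \setminus \V(f) = \bigtimes_i (X_i \setminus E_i)$, intersect with $Y$ to get $\bigtimes_i (Y_i \setminus E_i)$, and handle the two alternatives in the hypothesis on $E_i$ separately to show $|Y_i \setminus E_i| = \min(b_i, a_i - e_i)$. The only (very minor) difference is that you explicitly record the inequality between $b_i$ and $a_i - e_i$ forced by each containment, which the paper leaves implicit when invoking the minimum; this is a small gain in clarity but not a different argument.
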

\begin{proof}
  It is easy to see that $\LM(f) = x_1^{e_1} \cdots x_n^{e_n}$ and
  that the nonzeros of $f$ on $X$
  are given by
  $X \setminus \V(f) = \bigtimes_{i=1}^n (X_i \ohne E_i)$,
  and hence $|X \setminus \V(f)| = \prod_{i=1}^n (a_i - e_i)$.
  We will now compute how many of these nonzeros lie in $Y$.
  To this end, we observe that
  \[
  \bigtimes_{i=1}^n (X_i \ohne E_i) \cap \bigtimes_{i=1}^n Y_i =
  \bigtimes_{i=1}^n (Y_i \ohne E_i).
  \]
  In the case that $E_i \subseteq X_i \setminus Y_i$,
  we have $Y_i \ohne E_i = Y_i$, and therefore
  $|Y_i \ohne E_i| = b_i$. In the case that
  $X_i \setminus Y_i \subseteq E_i$, we have
  $X_i \ohne E_i = (Y_i \union (X_i \setminus Y_i)) \ohne E_i
  = (Y_i \ohne E_i) \union ((X_i \ohne Y_i) \ohne E_i) =
  (Y_i \ohne E_i) \union \emptyset = Y_i \ohne E_i$,
  and therefore $|Y_i \ohne E_i| = a_i - e_i$.
  In each of these cases, $|Y_i \ohne E_i| = \min (a_i - e_i, b_i)$.
  Therefore, $|P \ohne \V(f)| = |X \ohne \V(f)| -
              |Y \ohne \V(f)| =  \prod_{i=1}^n (a_i - e_i)
       - \prod_{i = 1}^n \min (b_i, a_i - e_i)$.
\end{proof}

Based on Theorem~\ref{tm:afcp},
we can now prove Theorem~\ref{thm:AFC-punct}.
\begin{proof}[Proof of Theorem~\ref{thm:AFC-punct}]
    We fix an admissible ordering $\lea$ such that
  $\sum_{i=1}^n \gamma_i < \sum_{i=1}^n \delta_i$ implies
  $\mono{\gamma} \lea \mono{\delta}$.

  For proving~\eqref{it:bin1},  
 we divide $f$ by $G'$, where $G'$ is the Gr\"obner basis
  of $\I (P)$ given in Theorem~\ref{thm:ballcn1} and obtain
  a standard expression $f = \sum_{i=1}^{n+1} h_i g'_{i} + r$ such
  that the remainder $r$ contains no monomial divisible
  by a $\LM(g')$ with $g' \in G'$. If $r = 0$, then
  $f \in \I(P)$, and thus $\I(P) \setminus \V(f) = \emptyset$.
  If $r \neq 0$, then $\deg (r) \le \deg (f)$.
  Let $(e_1, \ldots, e_n) := \LEXP (r)$.
  Now Theorem~\ref{tm:afcp} for $g := r$ yields
  $|P \ohne \V(f)| \ge \prod_{i=1}^n (a_i - e_i)
  - \prod_{i = 1}^n \min (b_i, a_i - e_i)$.
  Set
  $y_i := a_i - e_i$. Then $1 \le y_i \le a_i$.
  If $y_i \le b_i$ for all $i \in \ul{n}$, then
  $a_i - b_i \le e_i$ for all $i \in \ul{n}$, and
  thus $\LM (r)$ is divisible by the leading monomial
  $\prod_{i \in \ul{n}} x_i^{a_i - b_i}$ of one of the elements
  of $G'$.  Hence there is $i \in \ul{n}$ with
  $y_i > b_i$.
    Now $\sum_{i=1}^n y_i = \sum_{i=1}^n (a_i - e_i)
     = \sum_{i=1}^n a_i - \sum_{i=1}^n e_i =
       \sum_{i=1}^n a_i - \deg (r) \ge \sum_{i=1}^n a_i - \deg (f)$.
    Hence $(y_1, \ldots, y_n) \in A$, which proves
   \eqref{eq:Zf}.
    
   For proving~\eqref{it:bin2}, 
   let $(e_1, \ldots, e_n) := \LEXP (f)$.
  Now Theorem~\ref{tm:afcp} for $g := f$ yields
  $|P \ohne \V(f)| \ge \prod_{i=1}^n (a_i - e_i)
  - \prod_{i = 1}^n \min (b_i, a_i - e_i)$. Set
  $y_i := a_i - e_i$. Then since $e_i \le \deg_{x_i} (f)$,
  we have $a_i - \deg_{x_i} (f) \le y_i \le a_i$.
    Now $\sum_{i=1}^n y_i = \sum_{i=1}^n (a_i - e_i)
     = \sum_{i=1}^n a_i - \sum_{i=1}^n e_i =
       \sum_{i=1}^n a_i - \deg (f)$.
    Hence $(y_1, \ldots, y_n) \in B$, which proves
   \eqref{eq:Zf2}.
\end{proof}
Setting $Y_i := \emptyset$ for all $i \in \ul{n}$, one recovers
the classical Alon-F\"uredi Theorem for grids.
\section*{Acknowledgements}
The authors thank Pete\ L.\ Clark for sharing \cite{Cl:ATCW}. The second
author wishes to thank the Institute for Algebra at the Johannes
Kepler University Linz for their hospitality in the spring
of 2025 when this work commenced.
\bibliography{cnps19}
\end{document}